\definecolor{jz}{rgb}{0.1,0.45,0.1}
\newcommand{\proofref}[1]{}
\newcommand{\D}{\mathrm{D}} \newcommand{\measi}{{\rho}}
\newcommand{\measii}{{\pi}}
\newcommand{\U}[1]{U_{#1}}
\newcommand{\temp}[1]{}
\newcommand{\norm}[2][]{\| #2 \|_{#1}}
\newcommand{\hide}[1]{}
\newcommand{\frf}{\mathfrak{f}}
\newcommand{\normc}[2][]{\left\| #2 \right\|_{#1}}
\newcommand{\set}[2]{\{#1\,:\,#2\}} \newcommand{\setc}[2]{\left\{#1\,
    :\,#2\right\}} \usepackage{enumitem}
\newcommand{\dd}{\;\mathrm{d}}
\newcommand{\ddd}{\mathrm{d}}
\DeclareMathOperator{\supp}{supp} 
\DeclareMathOperator{\argmin}{argmin}
\newcommand{\dfn}{\vcentcolon=} \newcommand{\dfnn}{=\vcentcolon}
\newtheorem{theorem}{Theorem}[section]
\newtheorem{corollary}[theorem]{Corollary}
\newtheorem{example}[theorem]{Example}
\newtheorem{lemma}[theorem]{Lemma}
\newtheorem{proposition}[theorem]{Proposition}
\newtheorem{remark}[theorem]{Remark}
\newtheorem{assumption}[theorem]{Assumption}
\numberwithin{equation}{section}
\newcommand{\cA} {{\mathcal A}} \newcommand{\cB} {{\mathcal B}}
\newcommand{\CN}{{\mathcal N}}
\newcommand{\C}{{\mathbb C}} \newcommand{\N}{{\mathbb N}}
\newcommand{\R}{{\mathbb R}} \newcommand{\bbP}{{\mathbb P}}
 \newcommand{\scr}[1]{{\mathfrak{#1}}}
\newcommand{\eps}{\varepsilon}
 \newcommand{\bsb}{{\boldsymbol b}}
\newcommand{\bsx}{{\boldsymbol x}} \newcommand{\bst}{{\boldsymbol t}}
\newcommand{\bsy}{{\boldsymbol y}} \newcommand{\bsz}{{\boldsymbol z}}
\newcommand{\bszeta}{{\boldsymbol \zeta}}
\newcommand{\bskappa}{{\boldsymbol \kappa}}
 \newcommand{\bsvarrho}{{\bm
    \varrho}} 
\newcommand{\bsnu}{{\bm \nu}} \newcommand{\bsmu}{{\bm \mu}}
\newcommand{\bseta}{{\bm \eta}} 
\newcommand{\bsdelta}{{\bm \delta}}
\newcommand{\bsnul}{{\bm 0}} 
\begin{document}
\title{Sparse approximation of triangular transports. Part II: the
  infinite dimensional case\thanks{This paper was written during the
    postdoctoral stay of JZ at MIT.  JZ acknowledges support by the
    Swiss National Science Foundation under Early Postdoc Mobility
    Fellowship 184530. YM and JZ acknowledge support from the United
    States Department of Energy, Office of Advanced Scientific
    Computing Research, AEOLUS Mathematical Multifaceted Integrated
    Capability Center.}  \thanks{The original manuscript
    \cite{2006.06994} has been split into two parts; the first part is
    \cite{zm1}, and the present paper is the second part.}}
\author[1]{Jakob Zech} \author[2]{Youssef Marzouk}
\affil[1]{\footnotesize
  Heidelberg University, 69120 Heidelberg, Germany\\
  \href{mailto:jakob.zech@uni-heidelberg.de}{jakob.zech@uni-heidelberg.de}}
\affil[2]{\footnotesize Massachusetts Institute of Technology,
  Cambridge, MA 02139, USA\\
  \href{mailto:ymarz@mit.edu}{ymarz@mit.edu}}
  
\maketitle

{ \abstract{For two probability measures $\measi$ and $\measii$ on
    $[-1,1]^\N$ we investigate the approximation of the triangular
    Knothe--Rosenblatt transport $T:[-1,1]^\N\to [-1,1]^\N$ that pushes
    forward $\measi$ to $\measii$. Under suitable assumptions, we show
    that $T$ can be approximated by rational functions without
    suffering from the curse of dimension. Our results are applicable
    to posterior measures arising in certain %
    inference problems
    where the unknown belongs to an (infinite dimensional) Banach
    space. In particular, we show that it is possible to efficiently
    approximately sample from certain high-dimensional
    measures %
    by transforming a
    lower-dimensional latent variable.
  }%
\\

\noindent {\bf Key words:} transport maps, sampling, domains of
holomorphy, sparse approximation

\noindent
{\bf Subject classification:} 62D05, 32D05, 41A10, 41A25, 41A46

\numberwithin{equation}{section}

\section{Introduction}\label{sec:infinite}
In this paper we discuss the approximation of transport maps on
infinite dimensional domains. Our main motivation are inference
problems, in which the unknown belongs to a Banach space $Y$. Two
examples could be the following:
\begin{itemize}
\item {\bf Groundwater flow}: Consider a porous medium in a domain
  $\D\subseteq\R^3$. Given observations of the subsurface flow, we are
  interested in the permeability (hydraulic conductivity) of the medium
  in $\D$. The physical system is described by an elliptic partial
  differential equation, and the unknown quantity describing the permeability can be
  modelled as a function $\psi\in L^\infty(\D)=Y$ \cite{groundwater}.
\item {\bf Inverse scattering}: Suppose that
  $\D_{\rm scat}\subseteq\R^3$ is filled by a perfect conductor and
  illuminated by an electromagnetic wave. Given measurements of the
  scattered wave, we are interested in the shape of the scatterer
  $\D_{\rm scat}$. Assume that this domain can be described as the
  image of some bounded reference domain $\D\subseteq\R^3$ under a
  bi-Lipschitz transformation $\psi:\D\to\R^3$, i.e.,
  $\D_{\rm scat}=\psi(\D)$.
  The unknown is then the function $\psi\in W^{1,\infty}(\D)=Y$. We
  describe the forward model in \cite{JSZ16}.
\end{itemize}

The Bayesian approach to these problems is to model $\psi$ as a
$Y$-valued random variable and determine the distribution of $\psi$
conditioned on a noisy observation of the system. Bayes'
theorem can be used to specify this ``posterior'' distribution via the
prior and the likelihood. The prior is a measure on $Y$
that represents our information on $\psi\in Y$ before making an
observation. Mathematically speaking, assuming that the observation
and the unknown follow some joint distribution, the prior is the
marginal distribution of the unknown $\psi$. %
The goal is to explore the posterior and in this way to make inferences
about $\psi$. We refer to \cite{MR3839555} for more details on the
general methodology of Bayesian inversion in Banach spaces.

For the analysis and implementation of such methods, instead of
working with (prior and posterior) measures on the Banach space $Y$,
it can be convenient to parameterize the problem and work with measures
on $\R^\N$. To demonstrate this, choose a sequence $(\psi_j)_{j\in\N}$
in $Y$ and a measure $\mu$ on $\R^\N$. With $\bsy\dfn (y_j)_{j\in\N}\in\R^\N$
\begin{equation}\label{eq:prior}
  \Phi(\bsy)\dfn \sum_{j\in\N}y_j\psi_j
\end{equation}
we can formally define a prior measure on $Y$ as the pushforward
$\Phi_\sharp\mu$. Instead of inferring $\psi\in Y$ directly, we may
instead infer the coefficient sequence $\bsy=(y_j)_{j\in\N}\in\R^\N$,
in which case $\mu$ holds the prior information on the unknown
coefficients. %
These viewpoints are equivalent in the sense that the
conditional distribution of $\psi$ given an observation is the
pushforward, under $\Phi$, 
of the conditional distribution of $\bsy$ given the
observation. Under certain assumptions on the prior and
the space $Y$, the construction \eqref{eq:prior} arises naturally through the
Karhunen--Lo\`eve expansion; see, e.g., \cite{MR3308418,1509.07526}. In
this case the $y_j\in\R$ are uncorrelated random variables with
unit variance, and the $\psi_j$ are eigenvectors of the prior
covariance operator, with their norms equal to the square root of the
corresponding eigenvalues.

In this paper we concentrate on the special case where the
coefficients $y_j$ are known to belong to a bounded interval. Up to a
shift and a scaling this is equivalent to $y_j\in [-1,1]$, which will
be assumed throughout.
We refer to \cite[Sec.~2]{MR3839555} for the construction and further
discussion of such (bounded) priors. The goal then becomes to
determine and explore the posterior measure on $U\dfn [-1,1]^\N$. Denote
this measure by $\measii$ and let $\mu$ be %
the prior measure on $U$ such that $\measii\ll\mu$.  
Then the Radon-Nikodym derivative
$f_\measii\dfn \frac{\ddd\measii}{\ddd\mu}:U\to [0,\infty)$
exists.
Since the forward model (and thus the likelihood)
only depends on
$\Phi(\bsy)$ in the Banach space $Y$, $f_\measii$ must be of the type
\begin{equation}\label{eq:posterior}
  f_\measii(\bsy) = \frf_{\measii}(\Phi(\bsy))=\frf_{\measii}\Big(\sum_{j\in\N}y_j\psi_j\Big)
\end{equation}
for some $\frf_\measii:Y\to [0,\infty)$. We give a concrete example
in Ex.~\ref{ex:bayes}
where this relation holds. 

``Exploring'' the posterior refers to computing expectations and
variances w.r.t.\ $\measii$, or detecting areas of high probability
w.r.t.\ $\measii$. A standard technique to do so in high dimensions is
Monte Carlo---or in this context Markov chain Monte Carlo---sampling,
e.g., \cite{10.5555/1051451}. Another approach is via transport maps
\cite{MR3821485}. Let $\measi$ be another measure on $U$ from which
it is easy to sample. Then, a map $T:U\to U$ satisfying
$T_\sharp\measi=\measii$ (i.e.,
$\measii(A)=\measi(\set{\bsy}{T(\bsy)\in A})$ for all measurable $A$)
is called a transport map that pushes forward $\measi$ to $\measii$.
Such a $T$ has the property that if $\bsy\sim\measi$ then
$T(\bsy)\sim\measii$, and thus samples from $\measii$ can easily be
generated once $T$ has been computed. Observe that
$\Phi\circ T:U\to Y$ will then transform a sample from $\measi$ to a
sample from $\Phi_\sharp T_\sharp\measi=\Phi_\sharp\measii$, which is
the posterior %
in the Banach space $Y$.  Thus, given $T$, we can perform inference on
the quantity in the Banach space.

This motivates the setting we are investigating in this paper: for two
measures $\measi$ and $\measii$ on $U$, such that their densities are
of the type \eqref{eq:posterior} for a smooth (see
Sec.~\ref{sec:main}) function $\frf_\measii$, we are interested in the
approximation of $T:U\to U$ such that $T_\sharp\measi=\measii$. More
precisely, we will discuss the approximation of the so-called
Knothe--Rosenblatt (KR) transport by rational functions. The reason
for using rational functions (rather than polynomials) is to guarantee
that the resulting approximate transport is a bijection from $U\to U$. The rate of
convergence will in particular depend on the decay rate of the
functions $\psi_j$. If \eqref{eq:prior} is a Karhunen--Lo\`eve
expansion, this is the decay rate of the square root of the
eigenvalues of the covariance operator of the prior. The faster this
decay, the larger the convergence rate will be. {%
  The reason for analyzing the triangular KR transport is its wide use
  in practical algorithms \cite{MR2972870,
    spantini2018inference,jaini2019sum,wehenkel2019unconstrained}, and
  the fact that its concrete construction makes it amenable to a
  rigorous analysis.}

Sampling from high-dimensional distributions by transforming a
(usually lower-dimensional) ``latent'' variable into a sample from the
desired distribution is a standard problem in machine learning. It is
tackled by methods such as generative adversarial networks \cite{goodfellow2014generative}
and variational autoencoders \cite{doersch2016tutorial}.
In the setting above, the
high-dimensional distribution is the posterior on $Y$. We will show
that under the assumptions of this paper, it is possible to
approximately sample from this distribution by transforming a low
dimensional latent variable, and without suffering from the curse of
dimensionality.
While Bayesian inference is our motivation, for the rest of the
manuscript the presentation remains in an abstract setting, and our
results therefore have ramifications on the broader task of
transforming high-dimensional distributions.

\subsection{Contributions and outline}
In this manuscript we generalize the analysis of \cite{zm1} to the
infinite dimensional case. Part of the proofs are based on the results
in \cite{zm1}, which we recall in the appendix where appropriate to
improve readability.

In Sec.~\ref{sec:main} we provide a short description of our main
result. Sec.~\ref{SEC:Tinf} discusses the KR map
in infinite dimensions. Its well-definedness in infinite dimensions has
been established in \cite{bogachevtri}. In Thm.~\ref{THM:KNOTHEINF} we
additionally give a formula for the pushforward density assuming
continuity of the densities w.r.t.\ the product topology. In
Sec.~\ref{sec:infanalyticity} we analyze the regularity of the KR
transport. %
The fact that a transport inherits the smoothness of
the densities is known for certain function classes: for example,
in the case of $C^k$ densities, \cite{MR3349831} shows that the
optimal transport also belongs to $C^k$, and a similar statement holds
for the KR transport; see for example \cite[Remark
2.19]{santambrogio}. In Prop.~\ref{PROP:COR:DININF}, assuming analytic
densities we show analyticity of the KR transport. Furthermore, and
more importantly, we carefully examine the domain of holomorphic
extension to the complex numbers.
These results are
exploited in Sec.~\ref{sec:polinfty} to show convergence of rational
function approximations to $T$ in Thm.~\ref{THM:TINF}. This result
proves a \emph{dimension-independent} higher-order convergence rate for
the transport of measures supported on infinite dimensional spaces (which
need not be supported on finite dimensional
subspaces). %
In this result, \emph{all} occurring constants (not just the
convergence rate) are controlled independently of the dimension.
In Sec.~\ref{sec:measinfty} we show that this implies convergence of
the pushforward measures (on $U$ and on the Banach space $Y$) in the
Hellinger distance, the total variation distance, the KL divergence,
and the Wasserstein distance. These results are formulated in
Thm.~\ref{THM:MEASCONVINF} and Thm.~\ref{thm:wassersteinconv}. To
prove the latter, in Prop.~\ref{PROP:WASSERSTEIN} we slightly
extend a statement from \cite{MR4120535} to compact Polish spaces
to show that the Wasserstein distance between two pushforward measures
can be bounded by the maximal distance of the two maps pushing forward
the initial measure. Finally, we show that it is possible to compute
approximate samples for the pushforward measure in the Banach space
$Y$, by mapping a low-dimensional reference sample to the Banach
space; see Cor.~\ref{COR:MEASCONVINF}.
All proofs can be found in the appendix.

  \section{Main result}\label{sec:main}
  Let for $k\in\N$
  \begin{equation}\label{eq:U}
    \U{k}\dfn [-1,1]^k\qquad\text{and}\qquad U\dfn [-1,1]^\N
  \end{equation}
  where these sets are equipped with the product topology and the
  Borel $\sigma$-algebra, which coincides with the product
  $\sigma$-algebra \cite[Lemma 6.4.2 (ii)]{bogachev}. Additionally, let
  $U_0\dfn \emptyset$. Denote by $\lambda$ the Lebesgue measure on
  $[-1,1]$ and by
  \begin{equation}\label{eq:mu}
    \mu=\bigotimes_{j\in\N}\frac{\lambda}{2}
  \end{equation}
  the infinite product measure. Then $\mu$ is a (uniform) probability
  measure on $U$. By abuse of notation for $k\in\N$ we additionally 
  denote $\mu=\otimes_{j=1}^k\frac{\lambda}{2}$, where $k$ will always
  be clear from context.

  For a \emph{reference} $\measi\ll\mu$ and a \emph{target} measure
  $\measii\ll\mu$ on $U$, we investigate the smoothness and
  approximability of the %
  KR transport $T:U\to U$ satisfying $T_\sharp\measi=\measii$; the
  notation $T_\sharp\measi$ refers to the pushforward measure defined
  by
  $T_\sharp\measi(A)\dfn \measi(\set{T(\bsy)\in A}{\bsy\in U})$ for
  all measurable $A\subseteq U$.  While in general there exist
  multiple maps $T:U\to U$ pushing forward $\measi$ to $\measii$, the
  KR transport is the unique such map satisfying \emph{triangularity}
  and \emph{monotonicity}.
  Triangularity refers to the $k$th component $T_k$ of $T=(T_k)_{k\in\N}$
  being a function of the variables $x_1,\dots,x_k$ only, i.e.,
  $T_k:\U{k}\to \U{1}$ for all $k\in\N$. Monotonicity means that
  $x_k\mapsto T_k(x_1,\dots,x_{k-1},x_k)$ is monotonically increasing
  on $\U{1}$ for every $k\in\N$ and every fixed
  $(x_1,\dots,x_{k-1})\in \U{k}$.

  Absolute continuity of $\measi$ and $\measii$ w.r.t.\ $\mu$ imply
  existence of the Radon-Nikodym derivatives
  \begin{equation}
    f_\measi\dfn \frac{\ddd\measi}{\ddd\mu}\qquad\text{and}\qquad
    f_\measii\dfn \frac{\ddd\measii}{\ddd\mu}
  \end{equation}
  which will also be referred to as the densities of these
  measures. Assuming for the moment existence of the KR transport $T$,
  approximating $T$ requires approximating the \emph{infinitely many}
  functions $T_k:\U{k}\to \U{1}$, $k\in\N$. This, and the fact that
  the domain $\U{k}$ of $T_k$ becomes increasingly high dimensional as
  $k\to\infty$, makes the problem quite challenging.

  For these reasons, further assumptions on $\measi$ and $\measii$ are
  necessary.
  Typical
  requirements imposed %
  on the measures guarantee some form of intrinsic
  low dimensionality. Examples include densities belonging to
  certain reproducing kernel Hilbert spaces, or to other function classes
  of sufficient regularity. In this paper we concentrate on the
  latter. %
  As is well-known, if
  $T_k:\U{k}\to \U{1}$ belongs to $C^k$, then it can be uniformly
  approximated with the $k$-independent convergence rate of $1$, for
  instance with multivariate polynomials. %
  The convergence rate to approximate $T_k$ then does not deteriorate
  with increasing $k$, but the constants in such error bounds
  usually still depend exponentially on $k$. Moreover, as
  $k\to\infty$, this line of argument requires the components of the
  map to become arbitrarily regular. For this reason, in the present
  work, where $T=(T_k)_{k\in\N}$, it is not unnatural to restrict
  ourselves to transports that are $C^\infty$. More precisely, we in
  particular assume \emph{analyticity} of the densities $f_\measi$ and
  $f_\measii$, which in turn implies analyticity of $T$ as we shall
  see.  This will allow us to control all occurring constants
  \emph{independent} of the dimension, and approximate the whole map
  $T:U\to U$ using only finitely many degrees of freedom in our
  approximation.

  Assume in the following that $Z$ is a Banach space with
  complexification $Z_\C$; see, e.g., \cite{padraig,munoz99} for the
  complexification of Banach spaces. We may think of $Z$ and $Z_\C$ as
  real and complex valued function spaces, e.g., $Z=L^2([0,1];\R)$ and
  $Z_\C=L^2([0,1];\C)$.
  To guarantee analyticity and the structure in \eqref{eq:prior} we
  consider densities $f$ of the following type:
  \begin{assumption}\label{ass:density}
    For constants $p\in (0,1)$, $0<M\le L<\infty$, a sequence
    $(\psi_j)_{j\in\N}\subseteq Z$, and a differentiable function
    $\frf:O_Z\to \C$ with $O_Z\subseteq Z_\C$ open, the following hold:
    \begin{enumerate}[label=(\alph*)]
    \item $\sum_{j\in\N}\norm[Z]{\psi_{j}}^p<\infty$,
    \item $\sum_{j\in\N}y_j\psi_{j}\in O_Z$ for all $\bsy\in U$,
    \item $\frf(\sum_{j\in\N}y_j\psi_{j})\in\R$ for all $\bsy\in U$,
    \item\label{item:densityML} $M= \inf_{\psi\in O_Z}|\frf(\psi)|\le \sup_{\psi\in
        O_Z}|\frf(\psi)| = L$.
    \end{enumerate}
    The function $f:U\to\R$ given by
  \begin{equation}\label{eq:density}
    f(\bsy)\dfn \frf\bigg(\sum_{j\in\N}\psi_{j}y_j\bigg)
  \end{equation}
  satisfies $\int_U f(\bsy)\dd\mu(\bsy)=1$.
\end{assumption}

\begin{assumption}\label{ass:densities}
  For two sequences $(\psi_{*,j})_{j\in\N}\in Z$ with
  $(*,Z)\in \{(\measi,X),(\measii,Y)\}$, the functions
  \begin{equation*}
    f_\measi(\bsy)=\frf_\measi\bigg(\sum_{j\in\N}y_j\psi_{\measi,j}\bigg),\qquad
    f_\measii(\bsy)=\frf_\measii\bigg(\sum_{j\in\N}y_j\psi_{\measii,j}\bigg)
  \end{equation*} %
  both satisfy Assumption \ref{ass:density} for some fixed constants
  $p\in (0,1)$ and $0<M\le L<\infty$.
\end{assumption}

The summability parameter $p$ determines the decay rate of the
functions $\psi_j$---the smaller $p$ the stronger the decay of the
$\psi_j$. Because $p<1$, the argument of $\frf$ in \eqref{eq:density}
is well-defined for $\bsy\in U$ since
$\sum_{j\in\N}|y_j|\norm[Z]{\psi_{j}}<\infty$. %

Our main result is about the existence and approximation of the
KR-transport $T:U\to U$ satisfying
$T_\sharp\measi=\measii$. We state the result here in a simplified
form; more details will be given in Thm.~\ref{THM:TINF},
Thm.~\ref{THM:MEASCONVINF}, and Thm.~\ref{thm:wassersteinconv}.
We only mention that the trivial approximation
$T_k(x_1,\dots,x_k)\simeq x_k$ is interpreted as not requiring any
degrees of freedom in the following theorem.
  
  \begin{theorem}\label{thm:main}
    Let $f_\measi:U\to (0,\infty)$ and $f_\measii:U\to (0,\infty)$ be
    two %
    probability densities as in Assumption
    \ref{ass:densities} for some $p\in (0,1)$. Then there exists a
    unique triangular, monotone, and bijective map $T:U\to U$
    satisfying $T_\sharp\measi=\measii$.

    Moreover, for $N\in\N$ there exists a space of rational functions
    employing $N$ degrees of freedom, and a bijective, monotone, and
    triangular $\tilde T:U\to U$ in this space such that
    \begin{equation}\label{eq:error}
      {\rm dist}(\tilde T_\sharp\measi,\measii)\le C N^{-\frac{1}{p}+1}.
    \end{equation}
    Here $C$ is a constant independent of $N$ and ``${\rm dist}$''
    may refer to the total variation distance, the Hellinger distance,
    the KL divergence, or the Wasserstein distance.
  \end{theorem}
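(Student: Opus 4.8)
The plan is to deduce the statement from the more detailed results announced in the introduction: Thm.~\ref{THM:KNOTHEINF} for the existence of the KR map and its pushforward density, Prop.~\ref{PROP:COR:DININF} for analyticity and the domain of holomorphic extension, Thm.~\ref{THM:TINF} for the rational approximation, and Thm.~\ref{THM:MEASCONVINF} together with Thm.~\ref{thm:wassersteinconv} for the convergence of the pushforward measures. The first step is existence, uniqueness, triangularity, monotonicity, and bijectivity of $T$. Since $f_\measi,f_\measii$ are strictly positive (by Assumption \ref{ass:densities}\ref{item:densityML}) and continuous on $U$ with the product topology (by smoothness of $\frf_\measi,\frf_\measii$ and $p<1$), the componentwise Knothe--Rosenblatt construction applies: with $G^*_k(x_1,\dots,x_{k-1};\cdot)$ the conditional CDF of the $k$th coordinate under $*\in\{\measi,\measii\}$ given the first $k-1$ coordinates, set $T_1=(G^\measii_1)^{-1}\circ G^\measi_1$ and, inductively, define $T_k(x_1,\dots,x_k)$ by matching $G^\measi_k(x_1,\dots,x_{k-1};\cdot)$ with $G^\measii_k(T_1(x_1),\dots,T_{k-1}(x_1,\dots,x_{k-1});\cdot)$. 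Strict positivity of the densities makes each $x_k\mapsto T_k$ a strictly increasing bijection of $[-1,1]$, so $T$ is a monotone triangular bijection of $U$; the pushforward identity $T_\sharp\measi=\measii$ and uniqueness within the class of monotone triangular maps follow from Thm.~\ref{THM:KNOTHEINF} (cf.\ \cite{bogachevtri}).

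The second step is the approximation. By Prop.~\ref{PROP:COR:DININF} each $T_k$ extends holomorphically to a polyellipse in $\C^k$ whose semi-axes grow with the index $j$ at a rate tied to the decay of $\norm[Z]{\psi_{\measii,j}},\norm[Z]{\psi_{\measi,j}}$, and $T_k(x_1,\dots,x_k)\to x_k$ as $k\to\infty$ in a quantitatively controlled manner. Feeding these extensions into the sparse polynomial/rational approximation theory of \cite{zm1} (recalled in the appendix), one selects a finite downward-closed multi-index set of total cardinality $O(N)$, concentrated on the leading ``most important'' coordinates, and builds componentwise a rational $\tilde T_k$ that is still strictly increasing in $x_k$ and maps $[-1,1]$ onto $[-1,1]$ (this is precisely where rational rather than polynomial functions are needed, to keep $\tilde T$ a bijection $U\to U$), with all but finitely many $\tilde T_k$ equal to the identity $x_k\mapsto x_k$. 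Since $\sum_j\norm[Z]{\psi_{*,j}}^p<\infty$ with $p<1$, the anisotropy yields the algebraic rate
\begin{equation*}
  \sup_{\bsy\in U}\Big(\sum_{k\in\N}|T_k(\bsy)-\tilde T_k(\bsy)|^2\Big)^{1/2}\le CN^{-\frac1p+1},
\end{equation*}
with $C$ depending only on $p$, $M$, $L$, and the sequences $(\psi_{*,j})_j$, but not on $N$; this is Thm.~\ref{THM:TINF}.

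The third step transfers this uniform bound on $T-\tilde T$ to the distance between $\tilde T_\sharp\measi$ and $\measii=T_\sharp\measi$. For the Wasserstein distance this is immediate from Prop.~\ref{PROP:WASSERSTEIN}: equipping $U$ with a weighted $\ell^2$ metric that makes it a compact Polish space, $W(\tilde T_\sharp\measi,T_\sharp\measi)\le\sup_{\bsy\in U}\dist(\tilde T(\bsy),T(\bsy))\le CN^{-\frac1p+1}$. For the total variation distance, the Hellinger distance and the KL divergence one uses the triangular change-of-variables formula: the density of $\tilde T_\sharp\measi$ with respect to $\mu$ is $(f_\measi\circ\tilde T^{-1})\cdot\prod_k(\partial_{x_k}\tilde T_k\circ\tilde T^{-1})^{-1}$, and a perturbation estimate controlling both $\tilde T-T$ and $\partial_{x_k}\tilde T_k-\partial_{x_k}T_k$, together with convergence and uniform positivity of the infinite Jacobian products, shows the two densities are uniformly close, giving the same rate in each metric; this is Thm.~\ref{THM:MEASCONVINF}. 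Taking the maximum of the finitely many resulting constants yields the single $C$ in \eqref{eq:error}.

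The main obstacle is what underlies the second step: making the holomorphic-extension domains of the $T_k$ explicit and uniform enough that the sparse approximation machinery delivers both the rate $N^{-\frac1p+1}$ and constants that do not blow up with $k$ or $N$. Concretely one must track how passing from the densities to conditional densities (integrating out the tail variables), then to conditional CDFs (a further integration), and finally to their inverses, shrinks the polyellipse of holomorphy only in a controlled, summable way, and that the resulting anisotropic smoothness classes are exactly those for which \cite{zm1} gives dimension-free rates. A secondary difficulty, in the third step, is the stability of the infinite product $\prod_k\partial_{x_k}T_k$ under approximation: one needs a quantitative lower bound on each factor and summability of $\sum_k|\partial_{x_k}T_k-1|$ so that the product neither vanishes nor diverges — again this is where the $p<1$ summability and the identity-tail structure of $T$ are essential.
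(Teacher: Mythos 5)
Your proposal is correct and follows essentially the same route as the paper: Theorem \ref{thm:main} is stated as a simplified summary whose proof is precisely the chain you describe — existence, uniqueness, and bijectivity of $T$ via Theorem \ref{THM:KNOTHEINF} (using the KR construction and continuity of the densities), analyticity and holomorphy domains via Proposition \ref{PROP:COR:DININF}, the rational sparse approximation and rate $N^{-1/p+1}$ via Theorem \ref{THM:TINF}, and the transfer to total variation, Hellinger, KL, and Wasserstein via Theorems \ref{THM:MEASCONVINF} and \ref{thm:wassersteinconv} together with Proposition \ref{PROP:WASSERSTEIN}. Two negligible inaccuracies: Theorem \ref{THM:TINF} bounds $\sum_k\|T_k-\tilde T_k\|_{L^\infty}$ (which dominates the $\sup$-$\ell^2$ quantity you wrote), and the paper equips $U$ with a weighted $\ell^1$ metric rather than weighted $\ell^2$, though either induces the product topology and works with Proposition \ref{PROP:WASSERSTEIN}.
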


  Equation \eqref{eq:error} shows a dimension-independent
  convergence rate (indeed our transport is defined on the infinite
  dimensional domain $U=[-1,1]^\N$), so that the curse of
  dimensionality is overcome. %
  The rate of algebraic convergence becomes arbitrarily large as
  $p\in (0,1)$ in Assumption \ref{ass:density} becomes small.  The
  convergence rate $\frac{1}{p}-1$ in Thm.~\ref{thm:main} is
  well-known for the approximation of functions as in
  \eqref{eq:density} by sparse polynomials, e.g.,
  \cite{CDS10,CDS11,CCS15}; also see Rmk.~\ref{rmk:bpe}. There is a
  key difference to earlier results dealing with the approximation of
  such functions: we do not approximate the function $f:U\to \R$ in
  \eqref{eq:density}, but instead we approximate the transport
  $T:U\to U$, i.e., an infinite number of functions. Our main
  observation in this paper is that the sparsity of the densities
  $f_\measi$ and $f_\measii$ carries over to the transport. Even
  though it has infinitely many components, $T$ can still be
  approximated very efficiently if the ansatz space is carefully
  chosen and tailored to the specific densities. In addition to
  showing the error convergence \eqref{eq:error}, in
  Thm.~\ref{THM:TINF} we give concrete ansatz spaces achieving this
  convergence rate.  These ansatz spaces can be computed in linear
  complexity and may be used in applications.

  The main application for our result is to provide a method to sample
  from the target $\measii$ or the pushforward $\Phi_\sharp\measii$ in
  the Banach space $Y$, where
  $\Phi(\bsy)=\sum_{j\in\N}y_j\psi_{\measii,j}$.  Given an
  approximation $\tilde T=(\tilde T_j)_{j\in\N}$ to $T$, this is
  achieved via $\Phi(\tilde T(\bsy))$ for $\bsy\sim\measi$.  It is
  natural to truncate this expansion, which yields
  \begin{equation*}
    \sum_{j=1}^s \tilde T_j(y_1,\dots,y_j)\psi_{\measii,j}
  \end{equation*}
  for some truncation parameter
  $s\in\N$ and $(y_1,\dots,y_s)\in\U{s}$. This map transforms
  {a sample from} %
  a distribution on the $s$-dimensional space $\U{s}$ to a
  sample from an infinite dimensional distribution on
  $Y$. In Cor.~\ref{COR:MEASCONVINF} we show that the error
  {of this truncated representation} in the Wasserstein distance converges
  with the same rate as given in
  Thm.~\ref{thm:main}.

  \begin{remark}
    The reference $\measi$ is a ``simple'' measure whose main
    purpose is to allow for easy sampling. %
    One possible choice for $\measi$ (that we have in mind throughout
    this paper) is the uniform measure $\mu$. It trivially satisfies
    Assumption \ref{ass:density} with $\frf_\measi:\C\to\C$ being the
    constant $1$ function (and, e.g., $\psi_{\measi,j}=0\in\C$).
  \end{remark}
  \begin{remark}
    Even though we can think of $\measi$ as being $\mu$, we
    formulated Thm.~\ref{thm:main} in more generality, mainly for the
    following reason:
    Since the assumptions on $\measi$ and $\measii$
    are the same, we may switch their roles. Thus Thm.~\ref{thm:main}
    can be turned into a statement about the inverse transport
    $S\dfn T^{-1}:U\to U$, which can also be approximated at the rate
    $\frac{1}{p}-1$.
  \end{remark}

  \begin{example}[Bayesian inference]\label{ex:bayes}
    For a Banach space
    $Y$ (``parameter space'') and a Banach space
    $\mathcal{X}$ (``solution space''), let $\scr{u}:O_Y\to
    \mathcal{X}_\C$ be a complex differentiable \emph{forward operator}
    that takes values in (the $\R$-vector space) $\mathcal{X}$
    for inputs in (the open subset of the $\R$-vector space) $Y\cap O_Y$.
    Here $O_Y\subseteq
    Y_\C$ is some nonempty open set. Let $G:\mathcal{X}\to
    \R^m$ be a bounded linear \emph{observation operator}. For some
    unknown $\psi\in
    Y$ we are given a noisy observation of the system in the form
    \begin{equation*}
      \varsigma = G(\scr{u}(\psi))+\eta\in\R^m,
    \end{equation*}
    where
    $\eta\sim\mathcal{N}(0,\Gamma)$ is a centered Gaussian random
    variable with symmetric positive definite covariance
    $\Gamma\in\R^{m\times m}$. The goal is to recover
    $\psi$ given the measurement $\varsigma$.

    To formulate the Bayesian inverse problem, we first
    fix a prior:
    Let $(\psi_{j})_{j\in\N}$ be
    a summable sequence of linearly independent elements in $Y$. With
    \begin{equation*}
      \Phi(\bsy)\dfn \sum_{j\in\N}y_j\psi_j
    \end{equation*}
    and the uniform measure $\mu$ on $U$, we choose the 
    prior  $\Phi_\sharp\mu$ on $Y$. Determining $\psi$ within the set
    $\set{\Phi(\bsy)}{\bsy\in U}\subseteq Y$ is equivalent to
    determining the coefficient sequence $\bsy\in U$. Assuming
    independence of $\bsy\sim \mu$ and $\eta\sim\CN(0,\Gamma)$, the
    distribution of $\bsy$ given $\varsigma$ (the posterior) can then
    be characterized by its density w.r.t.\ $\mu$, which, up to a
    normalization constant, equals
    \begin{equation}\label{eq:post}
      \exp\left(\Bigg(\varsigma-G\Big(\scr{u}\Big(\sum_{j\in\N}y_j\psi_j\Big)\Big)\Bigg)^\top\Gamma^{-1}\Bigg(\varsigma-G\Big(\scr{u}\Big(\sum_{j\in\N}y_j\psi_j\Big)\Big)\Bigg)\right).
    \end{equation}
    This posterior density is of the form \eqref{eq:density} and the
    corresponding measure $\measii$ can be chosen as a target in
    Thm.~\ref{thm:main}. Given $T$ satisfying
    $T_\sharp\measi=\measii$, we may then explore $\measii$ to perform
    inference on the unknown $\bsy$ (or its image $\Phi(\bsy)$
    in the Banach space $Y$); see for instance
    \cite[Sec.~7.4]{zm1}.  For more details on the rigorous derivation
    of \eqref{eq:post} we refer to \cite{CSStBIP2012} and in
    particular \cite[Sec.~3]{MR3839555}.
  \end{example}

  \begin{remark}\label{rmk:bpe}
    Functions as in Assumption \ref{ass:density} belong to the set of
    so-called ``$(\bsb,p,\eps)$-holomorphic'' functions; see, e.g.,
    \cite{CCS15}. This class contains infinite parametric functions
    that are holomorphic in each argument $y_j$, and exhibit some
    growth in the domain of holomorphic extension as $j\to\infty$. The
    results of the present paper and the key arguments remain valid if
    we replace Assumption \ref{ass:density} with the
    $(\bsb,p,\eps)$-holomorphy assumption. Since most relevant
    examples of such functions are of the specific type
    \eqref{eq:density}, we restrict the discussion to this case in
    order to avoid technicalities.
  \end{remark}

  }%
  \section{The Knothe--Rosenblatt transport in infinite
    dimensions}\label{SEC:Tinf}
  Recall that we consider the product topology on $U=[-1,1]^\N$.
  Assume that $f_\measi\in C^0(U;\R_+)$ and $f_\measii\in C^0(U;\R_+)$
  are two positive probability densities.  Here $\R_+\dfn (0,\infty)$,
  and $C^0(U;\R_+)$ denotes the continuous functions from
  $U\to\R_+$. We now recall the construction of the KR
  map.

    For $\bsy=(y_j)_{j\in\N}\in \C^\N$ and
    $1\le k\le n<\infty$ let
  \begin{equation}\label{eq:slices}
    \bsy_{[k]}\dfn (y_j)_{j=1}^k,\qquad
    \bsy_{[k:n]}\dfn (y_j)_{j=k}^n,\qquad
    \bsy_{[n:]}\dfn (y_j)_{j\ge n}.
  \end{equation}
  For $*\in\{\measi,\measii\}$ and $\bsy\in U$ define
  \begin{subequations}\label{eq:fk2}
  \begin{equation}
    \hat f_{*,0}(\bsy)\dfn 1
  \end{equation}
  and for $k\in\N$
  \begin{equation}
    \hat f_{*,k}(\bsy_{[k]})\dfn \int_{U}f_*(\bsy_{[k]},\bst) \dd\mu(\bst)>0,\qquad
    f_{*,k}(\bsy_{[k]})\dfn \frac{\hat f_{*,k}(\bsy_{[k]})}{\hat
      f_{*,k-1}(\bsy_{[k-1]})}>0.
  \end{equation}
  \end{subequations}
  Then, $\bsy_{[k]}\mapsto \hat f_{\measi,k}(\bsy_{[k]})$ is the
  marginal density of $\measi$ in the first $k$ variables
  $\bsy_{[k]}\in \U{k}$, and we denote the corresponding measure on
  $\U{k}$ by $\measi_k$. Similarly,
  $y_k\mapsto f_{\measi,k}(\bsy_{[k-1]},y_k)$ is the conditional
  density of $y_k$ given $\bsy_{[k-1]}$, and the corresponding measure
  on $\U{1}$ is denoted by $\measi_k^{\bsy_{[k-1]}}$. The same holds
  for the densities of $\measii$, and we use the analogous notation
  $\measii_k$ and $\measii_k^{\bsy_{[k-1]}}$ for the {marginal and
  conditional measures.}

  Recall that for two
  atomless measures $\eta$ and $\nu$ on $\U{1}$ with distribution
  functions $F_\eta:\U{1}\to [0,1]$ and $F_\nu:\U{1}\to [0,1]$,
  $F_\eta^{-1}\circ F_\nu:\U{1}\to \U{1}$ pushes forward
  $\nu$ to $\eta$, as is easily checked, e.g.,
  \cite[Thm.~2.5]{santambrogio}. In case $\eta$ and $\nu$ have
  positive densities on $\U{1}$, this map is the unique strictly
  monotonically increasing such function.
  With this in mind, the KR-transport can be constructed as follows:
  Let $T_1:\U{1}\to \U{1}$ be the (unique) monotonically increasing
  transport satisfying
  \begin{subequations}\label{eq:T}
    \begin{equation}
      (T_1)_\sharp \measi_1 = \measii_1.
    \end{equation}
    Analogous to \eqref{eq:slices} denote
    $T_{[k]}\dfn (T_j)_{j=1}^k:\U{k}\to \U{k}$. Let inductively
    for any $\bsy\in U$, $T_{k+1}(\bsy_{[k]},\cdot):\U{1}\to \U{1}$
    be the (unique) monotonically increasing transport such that
    \begin{equation}
      (T_{k+1}(\bsy_{[k]},\cdot))_\sharp \measi_{k+1}^{\bsy_{[k]}}
      =\measii_{k+1}^{T_{[k]}(\bsy_{[k]})}.
    \end{equation}
  \end{subequations}
  Note that $T_{k+1}:\U{{k+1}}\to \U{1}$ and thus
  $T_{[k+1]}=(T_j)_{j=1}^{k+1}:\U{{k+1}}\to \U{{k+1}}$. It can
  then be shown that for any $k\in\N$ \cite[Prop.~2.18]{santambrogio}
  \begin{equation}\label{eq:Tfinite}
    (T_{[k]})_\sharp \measi_k = \measii_k.
  \end{equation}

  By induction this construction yields a map $T\dfn (T_k)_{k\in\N}$
  where each $T_k:\U{k}\to \U{1}$ satisfies that
  $T_k(\bsy_{[k-1]},\cdot):\U{1}\to \U{1}$ is strictly monotonically
  increasing and bijective.
  This implies that $T:U\to U$ is bijective, as follows. First, to show \emph{injectivity}: let
  $\bsx\neq \bsy\in U$ and $j=\argmin\set{i}{x_i\neq y_i}$. Since
  $t\mapsto T_j(x_1,\dots,x_{j-1},t)$ is bijective,
  $T_j(x_1,\dots,x_{j-1},x_j)\neq T_j(x_1,\dots,x_{j-1},y_j)$ and thus
  $T(\bsx)\neq T(\bsy)$. Next, to show \emph{surjectivity:} fix $\bsy\in U$.
  Bijectivity of $T_1:\U{1}\to \U{1}$ implies existence of
  $x_1\in\U{1}$ such that $T_1(x_1)=y_1$. Inductively choose $x_j$
  such that $T_j(x_1,\dots,x_j)=y_j$.  Then $T(\bsx)=\bsy$. Thus:

  \begin{lemma}\label{lemma:bijective}
    Let $T=(T_k)_{k\in\N}:U\to U$ be triangular. If
    $t\mapsto T_k(\bsy_{[k-1]},t)$ is bijective from
    $\U{1}\to \U{1}$ for every $\bsy\in U$ and $k\in\N$, then
    $T:U\to U$ is bijective.
  \end{lemma}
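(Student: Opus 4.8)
The plan is to verify injectivity and surjectivity of $T$ separately, in each case reducing to the one-dimensional bijectivity of the slice maps $t\mapsto T_k(\bsy_{[k-1]},t)$ and exploiting the triangular structure; no topological or measure-theoretic input is needed, since the claim is purely set-theoretic.

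For \emph{injectivity}, I would take $\bsx\neq\bsy$ in $U$ and set $j\dfn\argmin\set{i\in\N}{x_i\neq y_i}$, which is well defined because the two sequences differ in at least one coordinate. By the choice of $j$ we have $\bsx_{[j-1]}=\bsy_{[j-1]}$, so triangularity gives $T_i(\bsx_{[i]})=T_i(\bsy_{[i]})$ for all $i<j$, while $x_j\neq y_j$ combined with injectivity of $t\mapsto T_j(\bsx_{[j-1]},t)$ yields $T_j(\bsx_{[j]})=T_j(\bsx_{[j-1]},x_j)\neq T_j(\bsx_{[j-1]},y_j)=T_j(\bsy_{[j]})$. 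Hence the $j$th components of $T(\bsx)$ and $T(\bsy)$ disagree, so $T(\bsx)\neq T(\bsy)$.

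For \emph{surjectivity}, I would fix $\bsy\in U$ and build a preimage $\bsx\in U$ coordinate by coordinate. Surjectivity of $T_1:\U{1}\to\U{1}$ provides $x_1\in\U{1}$ with $T_1(x_1)=y_1$. Inductively, assuming $\bsx_{[k]}\in\U{k}$ has been chosen with $T_i(\bsx_{[i]})=y_i$ for all $i\le k$, surjectivity of the slice $t\mapsto T_{k+1}(\bsx_{[k]},t)$ from $\U{1}$ to $\U{1}$ yields $x_{k+1}\in\U{1}$ with $T_{k+1}(\bsx_{[k]},x_{k+1})=y_{k+1}$. Since each $x_k\in[-1,1]$, this determines an element $\bsx\in U$, and by triangularity $T_i(\bsx)=T_i(\bsx_{[i]})=y_i$ for every $i\in\N$, i.e.\ $T(\bsx)=\bsy$.

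I do not expect a genuine obstacle here: the only point requiring a little care is the consistency of the inductive construction in the surjectivity argument, namely that $x_{k+1}$ is selected using \emph{only} the previously fixed coordinates $\bsx_{[k]}$ — which is precisely what triangularity of $T$ allows — so that the single sequence $\bsx$ simultaneously satisfies $T_i(\bsx_{[i]})=y_i$ for all $i$ rather than merely for each finite truncation.
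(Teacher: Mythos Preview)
Your proof is correct and follows essentially the same argument as the paper: injectivity via the minimal index where $\bsx$ and $\bsy$ differ, and surjectivity by inductively constructing a preimage coordinate by coordinate using the one-dimensional bijectivity of each slice. The only difference is that you spell out the inductive consistency a bit more carefully, which is fine.
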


  The continuity assumption on the densities guarantees that the
  marginal densities on $\U{k}$ converge uniformly to the full density,
  as we show next. This indicates that in principle it is possible to
  approximate the infinite dimensional transport map by restricting to
  finitely many dimensions.

  \begin{lemma}\label{LEMMA:FC0}
    Let $f\in C^0(U;\R_+)$, and let $\hat f_k$ and
    $f_k$ be as in \eqref{eq:fk2}. Then
    \begin{enumerate}
    \item $f$ is measurable and $f\in L^2(U,\mu)$,
    \item $\hat f_{k}\in C^0(\U{k};\R_+)$ and
      $f_{k}\in C^0(\U{k};\R_+)$ for every $k\in\N$,
    \item it holds
      \begin{equation}\label{eq:unifconv}
        \lim_{k\to\infty}\sup_{\bsy\in U}|\hat
        f_{k}(\bsy_{[k]})-f(\bsy)|=0.
      \end{equation}
    \end{enumerate}
  \end{lemma}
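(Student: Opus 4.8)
The plan is to exploit compactness of $U$ in the product topology together with continuity of $f$. First, for (i): since $U$ is compact and $f\in C^0(U;\R_+)$, $f$ is bounded, say $0<m\le f\le M<\infty$; being continuous it is Borel measurable, and boundedness together with $\mu(U)=1$ gives $f\in L^2(U,\mu)$ immediately. For (ii), I would argue that each $\hat f_k$ is continuous by a uniform-continuity / dominated-convergence argument: fix $k$ and let $\bsy_{[k]}^{(n)}\to\bsy_{[k]}$ in $\U{k}$; then for each fixed $\bst\in U$ the point $(\bsy_{[k]}^{(n)},\bst)\to(\bsy_{[k]},\bst)$ in $U$, so $f(\bsy_{[k]}^{(n)},\bst)\to f(\bsy_{[k]},\bst)$ by continuity of $f$, and since $f\le M$ with $\mu$ a probability measure, dominated convergence gives $\hat f_k(\bsy_{[k]}^{(n)})\to\hat f_k(\bsy_{[k]})$. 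Positivity of $\hat f_k$ follows from $f\ge m>0$, hence $\hat f_k\ge m$; and $f_k=\hat f_k/\hat f_{k-1}$ is then a ratio of positive continuous functions, so $f_k\in C^0(\U{k};\R_+)$.

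The substantive part is (iii). The key tool is that on the compact space $U$, the continuous function $f$ is in fact \emph{uniformly continuous} with respect to any metric inducing the product topology. Concretely, fix $\eps>0$. By uniform continuity (or directly: compactness of $U$ and a finite subcover argument applied to the open cover by sets where $f$ oscillates by less than $\eps$), there is a finite index set — and since the product topology has a basis of cylinder sets depending on finitely many coordinates — an index $k_0$ such that for all $\bsy,\bsy'\in U$ with $\bsy_{[k_0]}=\bsy'_{[k_0]}$ one has $|f(\bsy)-f(\bsy')|<\eps$. Then for any $k\ge k_0$ and any $\bsy\in U$,
\begin{equation*}
  |\hat f_k(\bsy_{[k]})-f(\bsy)|
  =\Big|\int_U\big(f(\bsy_{[k]},\bst)-f(\bsy)\big)\dd\mu(\bst)\Big|
  \le\int_U|f(\bsy_{[k]},\bst)-f(\bsy)|\dd\mu(\bst)\le\eps,
\end{equation*}
because $(\bsy_{[k]},\bst)$ and $\bsy$ agree on their first $k\ge k_0$ coordinates. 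Taking the supremum over $\bsy\in U$ and letting $\eps\to0$ yields \eqref{eq:unifconv}.

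I expect the main (mild) obstacle to be the rigorous justification of the ``finitely many coordinates'' step in (iii): one must argue that a continuous function on the compact product $U=[-1,1]^\N$ can be approximated uniformly by functions depending on finitely many coordinates, equivalently that its modulus of continuity is controlled by finitely many coordinates. This is standard — it follows, e.g., from the Stone–Weierstrass theorem applied to the algebra of cylinder functions, or from compactness by covering $U$ with basic open cylinders on which $f$ varies by less than $\eps$ and extracting a finite subcover whose defining coordinates are then bounded by some $k_0$. Everything else (boundedness, measurability, $L^2$ membership, continuity of the marginals via dominated convergence, positivity) is routine given compactness of $U$ and positivity and continuity of $f$.
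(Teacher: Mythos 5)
Your proof is correct and follows essentially the same approach as the paper: compactness of $U$ (Tychonoff), uniform continuity, and then controlling the oscillation of $f$ by its dependence on finitely many coordinates to obtain \eqref{eq:unifconv}. The paper's version is slightly more explicit and unified---it introduces the $\ell^1$-weighted metric $d(\bsx,\bsy)=\sum_j c_j|x_j-y_j|$ inducing the product topology (Lemma~\ref{lemma:producttop}), applies Heine--Cantor once to get a single modulus $\delta_\eps$ that handles both (ii) and (iii), whereas you invoke dominated convergence for (ii) and a finite-subcover or Stone--Weierstrass argument for (iii), and it also explicitly records that the Borel and product $\sigma$-algebras on $U$ coincide for the measurability claim in (i)---a point worth making explicit in your version as well.
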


  Throughout what follows $T$ always stands for the KR transport
  defined in \eqref{eq:T}. Next we show that $T$ indeed pushes forward
  $\measi$ to $\measii$, and additionally we provide a formula for the
  transformation of densities. In the following
  $\partial_jg(\bsx)\dfn \frac{\partial}{\partial
    x_j}g(\bsx)$. Furthermore, we call $f:U\to\R$ a \emph{positive
    probability density} if $f(\bsy)>0$ for all $\bsy\in U$ and
  $\int_U f(\bsy)\dd\mu(\bsy)=1$.

\begin{theorem}\label{THM:KNOTHEINF}
  Let $f_\measii$, $f_\measi \in C^0(U;\R_+)$ be two positive
  probability densities.
  Then
  \begin{enumerate}
  \item\label{item:KT} $T=(T_j)_{j\in\N}:U\to U$ is measurable,
    bijective and satisfies $T_\sharp\measi=\measii$,
  \item\label{item:rdder} for each $k\in\N$ holds
    $\partial_kT_k(\bsy_{[k]})\in C^0(\U{k};\R_+)$ and
    \begin{equation}\label{eq:detinf}
      \det dT(\bsy)\dfn \lim_{n\to\infty}\prod_{j=1}^n\partial_jT_j(\bsy_{[j]})\in C^0(U;\R_+)
    \end{equation}
    is well-defined (i.e., converges in $C^0(U;\R_+)$). Moreover
    \begin{equation}
      f_{\measii}(T(\bsy))\det dT(\bsx)=f_\measi(\bsy)
      \qquad\forall\bsy\in U.
    \end{equation}
  \end{enumerate}
\end{theorem}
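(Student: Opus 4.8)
The plan is to obtain both parts by truncation from the finite-dimensional construction \eqref{eq:T}--\eqref{eq:Tfinite} and the uniform convergence of the marginal densities in Lemma~\ref{LEMMA:FC0}.

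\emph{Part~\ref{item:KT}.} Bijectivity is already supplied by Lemma~\ref{lemma:bijective}, since by \eqref{eq:T} each $t\mapsto T_k(\bsy_{[k-1]},t)$ is the strictly increasing transport between two measures on $\U{1}$ with positive continuous densities, hence a bijection of $\U{1}$. For continuity of $T$ in the product topology (which yields Borel measurability) I would argue by induction on $k$ that $T_{[k]}$ is continuous on $\U{k}$: the base case $T_1=F_{\measii_1}^{-1}\circ F_{\measi_1}$ is a composition of the distribution functions of the marginals $\measi_1,\measii_1$, with $F_{\measii_1}^{-1}$ continuous because $f_{\measii,1}>0$; for the step, write $T_k(\bsy_{[k]})=H_k\big(T_{[k-1]}(\bsy_{[k-1]}),F_{\measi_k^{\bsy_{[k-1]}}}(y_k)\big)$, where $H_k(\bsz_{[k-1]},\cdot)$ is the inverse of the jointly continuous, strictly increasing map $u\mapsto F_{\measii_k^{\bsz_{[k-1]}}}(u)$ (this inverse being jointly continuous in $(\bsz_{[k-1]},u)$), using continuity of $f_{\measi,k},f_{\measii,k}$ from Lemma~\ref{LEMMA:FC0} and the induction hypothesis. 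Since each $T_k$ is the composition of a continuous map on $\U{k}$ with the continuous projection $U\to\U{k}$, the product map $T:U\to U$ is continuous. To identify $T_\sharp\measi$ I would check that $T_\sharp\measi$ and $\measii$ agree on the $\pi$-system of cylinders $A=\set{\bsy\in U}{\bsy_{[k]}\in B}$ ($k\in\N$, $B\subseteq\U{k}$ Borel), which generates the Borel $\sigma$-algebra of $U$ \cite[Lemma~6.4.2]{bogachev}, so that coincidence there forces equality; indeed, since $T_j$ depends only on $\bsy_{[j]}$, $T^{-1}(A)=\set{\bsy\in U}{T_{[k]}(\bsy_{[k]})\in B}$ is the cylinder over $T_{[k]}^{-1}(B)$, whence $T_\sharp\measi(A)=\measi_k\big(T_{[k]}^{-1}(B)\big)=(T_{[k]})_\sharp\measi_k(B)=\measii_k(B)=\measii(A)$ by \eqref{eq:Tfinite}.

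\emph{Part~\ref{item:rdder}.} I would start from the one-dimensional change of variables applied to the conditional transport \eqref{eq:T}: since $t\mapsto T_k(\bsy_{[k-1]},t)$ is the monotone transport from the positive continuous density $f_{\measi,k}(\bsy_{[k-1]},\cdot)$ to $f_{\measii,k}(T_{[k-1]}(\bsy_{[k-1]}),\cdot)$, it is $C^1$ and, using $T_{[k]}(\bsy_{[k]})=(T_{[k-1]}(\bsy_{[k-1]}),T_k(\bsy_{[k]}))$,
\begin{equation*}
  \partial_kT_k(\bsy_{[k]})=\frac{f_{\measi,k}(\bsy_{[k]})}{f_{\measii,k}(T_{[k]}(\bsy_{[k]}))}>0 ,
\end{equation*}
which is continuous and positive on $\U{k}$ by Lemma~\ref{LEMMA:FC0} and the continuity of $T_{[k]}$ from part~\ref{item:KT}. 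The crucial step is then a telescoping computation: substituting $f_{*,j}=\hat f_{*,j}/\hat f_{*,j-1}$ from \eqref{eq:fk2} and using the nesting $T_{[j-1]}(\bsy_{[j-1]})=(T_{[j]}(\bsy_{[j]}))_{[j-1]}$ (equivalently, $T_{[n]}(\bsy_{[n]})=(T(\bsy))_{[n]}$), both the numerator and the denominator of $\prod_{j=1}^n\partial_jT_j(\bsy_{[j]})$ telescope and
\begin{equation*}
  \prod_{j=1}^n\partial_jT_j(\bsy_{[j]})=\frac{\hat f_{\measi,n}(\bsy_{[n]})}{\hat f_{\measii,n}(T_{[n]}(\bsy_{[n]}))} .
\end{equation*}
Finally I would pass to the limit in $C^0(U;\R_+)$: by the uniform convergence \eqref{eq:unifconv} in Lemma~\ref{LEMMA:FC0}, $\hat f_{\measi,n}(\bsy_{[n]})\to f_\measi(\bsy)$ uniformly on $U$, and since $T_{[n]}(\bsy_{[n]})=(T(\bsy))_{[n]}$ with $T(\bsy)\in U$,
\begin{equation*}
  \sup_{\bsy\in U}\abs{\hat f_{\measii,n}(T_{[n]}(\bsy_{[n]}))-f_\measii(T(\bsy))}\le\sup_{\bsz\in U}\abs{\hat f_{\measii,n}(\bsz_{[n]})-f_\measii(\bsz)}\longrightarrow 0 .
\end{equation*}
As $U$ is compact (Tychonoff) and $f_\measii$ is continuous and positive, $\inf_U f_\measii>0$, so combined with the two uniform limits the denominators $\hat f_{\measii,n}\circ T_{[n]}$ stay bounded away from $0$ uniformly in $n$ and $\bsy$; hence the products converge uniformly on $U$ to the continuous, strictly positive function $\bsy\mapsto f_\measi(\bsy)/f_\measii(T(\bsy))$. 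This shows that $\det dT$ in \eqref{eq:detinf} is well-defined as an element of $C^0(U;\R_+)$ and satisfies $f_\measii(T(\bsy))\det dT(\bsy)=f_\measi(\bsy)$ for every $\bsy\in U$.

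I expect the main obstacle to be precisely this last passage to the limit: the telescoping only works because $T_{[n]}(\bsy_{[n]})$ is literally the $n$-truncation of $T(\bsy)$, and one must then synchronize the uniform convergence of $\hat f_{\measi,n}$ and of $\hat f_{\measii,n}\circ T_{[n]}$ while ensuring the limiting denominator stays bounded below by $\inf_U f_\measii>0$ so that the ratio converges in $C^0(U;\R_+)$. By contrast, the continuity/measurability bookkeeping in part~\ref{item:KT} and the reduction of $T_\sharp\measi=\measii$ to cylinder sets via \eqref{eq:Tfinite} should be routine.
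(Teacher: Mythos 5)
Your proposal is correct and follows essentially the same route as the paper: part \ref{item:KT} by reducing $T_\sharp\measi=\measii$ to cylinder sets via \eqref{eq:Tfinite}, and part \ref{item:rdder} by establishing $\prod_{j=1}^n\partial_jT_j(\bsy_{[j]})=\hat f_{\measi,n}(\bsy_{[n]})/\hat f_{\measii,n}(T_{[n]}(\bsy_{[n]}))$ and then passing to the uniform limit using Lemma~\ref{LEMMA:FC0} and $\inf_U f_\measii>0$. The only cosmetic difference is that you derive the product identity by telescoping the one-dimensional Jacobians $\partial_kT_k=f_{\measi,k}/(f_{\measii,k}\circ T_{[k]})$, whereas the paper applies the $n$-dimensional change of variables (\cite[Prop.~2.5]{bogachevtri}) to $(T_{[n]})_\sharp\measi_n=\measii_n$ directly; both yield the same formula.
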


\begin{remark}\label{rmk:knotheS}
  Switching the roles of $f_\measi$ and
  $f_\measii$, for %
  $S=T^{-1}$ it holds $f_\measi(S(\bsy))\det
  dS(\bsy)=f_\measii(\bsy)$ for all $\bsy\in U$, where $\det
  dS(\bsy)\dfn
  \lim_{n\to\infty}\prod_{j=1}^n\partial_jS_j(\bsy_{[j]})$ is
  well-defined.
\end{remark}

\section{Analyticity of $T$}\label{sec:infanalyticity}
In this section we investigate the domain of analytic extension of
$T$. To state our results, for $\delta>0$
and $D\subseteq\C$
we introduce the complex
sets
\begin{equation*}
  \cB_\delta\dfn \set{z\in\C}{|z|<\delta}\qquad\text{and}\qquad
  \cB_\delta(D)\dfn \set{z+y}{z\in\cB_\delta,~y\in D},
\end{equation*}
and for $k\in\N$ and $\bsdelta\in (0,\infty)^k$
\begin{equation*}
  \cB_\bsdelta\dfn \bigtimes_{j=1}^k\cB_{\delta_j}\qquad\text{and}\qquad
  \cB_\bsdelta(D) \dfn \bigtimes_{j=1}^k \cB_{\delta_j}(D),
\end{equation*}
which are subsets of $\C^k$.
Their closures will be denoted by $\bar\cB_\delta$, etc. If we write
$\cB_\bsdelta(\U{1})\times U$ we mean elements $\bsy\in\C^\N$ with
$y_j\in \cB_{\delta_j}(\U{1})$ for $j\le k$ and $y_j\in \U{1}$
otherwise. Subsets of $\C^\N$ are always equipped with the product
topology.

In this section we analyze the domain of holomorphic extension of each
component $T_k:\U{k}\to\U{1}$ of $T$ to subsets of $\C^k$. The reason
why we are interested in such statements, is that they allow to upper
bound the expansion coefficients w.r.t.\ certain polynomial bases: For
a multiindex $\bsnu\in \N_0^k$ (where $\N_0=\{0,1,2,\dots\}$) let
$L_\bsnu(\bsy)=\prod_{j=1}^kL_{\nu_j}(y_j)$ be the product of the one
dimensional Legendre polynomials normalized in $L^2(\U{1},\mu)$.  Then
$(L_\bsnu)_{\bsnu\in\N_0^k}$ forms an orthonormal basis of
$L^2(\U{k},\mu)$.  Hence we can expand
$\partial_kT_k(\bsy_{[k]})=\sum_{\bsnu\in\N_0^k}l_{k,\bsnu} L_\bsnu(\bsy_{[k]})$
for $\bsy\in U$ and with the Legendre coefficients
\begin{equation}\label{eq:lkbsnu}
  l_{k,\bsnu}=\int_{\U{k}}\partial_kT_k(\bsy_{[k]})L_\bsnu(\bsy_{[k]})\in\R.
\end{equation}
Analyticity of $T_k$ (and thus of $\partial_kT_k$) on the set
$\cB_\bsdelta(\U{1})$ implies bounds of the type (see Lemma
\ref{lemma:legest})
\begin{equation}\label{eq:legbound}
  |l_{k,\bsnu}|\le C\norm[{L^\infty(\cB_\bsdelta(\U{1}))}]{\partial_kT_k}\prod_{j=1}^k (1+\delta_j)^{-\nu_j}.
\end{equation}
Here $C$ in particular depends on %
$\min_j \delta_j>0$. The exponential decay in each $\nu_j$
leads to exponential convergence of truncated sparse Legendre
expansions. Once we have approximated $\partial_kT_k$, we
integrate this term in $x_k$ to obtain an approximation to $T_k$.  The
reason for not approximating $T_k$ directly is explained after
Prop.~\ref{PROP:COR:DININF} below; see \eqref{eq:l0trivial}.
The size of the holomorphy
domain (the size of $\bsdelta$) determines the constants in these
estimates---the larger the entries of $\bsdelta$, the smaller the upper
bound \eqref{eq:legbound} and the faster the convergence.

We are now in position to present our main technical tool to find
suitable holomorphy domains of each $T_k$ (or equivalently
$\partial_kT_k$). We will work under the following assumption on
the two densities $f_\measi:U\to (0,\infty)$ and
$f_\measii:U\to (0,\infty)$.  The assumption is a modification of
\cite[Assumption 3.5]{zm1}.

\begin{assumption}\label{ass:infdens}
  For constants $C_1$, $M>0$, $L<\infty$,
  $k\in\N$, and $\bsdelta\in (0,\infty)^k$, the following hold:
    \begin{enumerate}[label=(\alph*)]
    \item\label{item:cordinN:0inf} $f\in C^0(\cB_{\bsdelta}(\U{1})\times U;\C)$
      and $f:U\to\R_+$ is a probability
    density,
  \item\label{item:cordinN:1inf}
    $\bsx\mapsto f(\bsx,\bsy)\in C^1(\cB_{\bsdelta}(\U{1});\C)$
    for all $\bsy\in U$,
  \item\label{item:cordinN:2inf}
    $ M\le |f(\bsy)|\le  L$ for all
    $\bsy\in \cB_{\bsdelta}(\U{1})\times U$,    
  \item\label{item:cordinN:3inf} 
    $\sup_{\bsy\in
      \cB_{\bsdelta}\times\{0\}^\N}|f(\bsx+\bsy)-f(\bsx)| \le
    C_1$
    for all $\bsx\in U$,    
  \item\label{item:cordinN:4inf} %
    $\sup_{\bsy\in
        \cB_{\bsdelta_{[j]}}\times \{0\}^{\N}}|f(\bsx+\bsy)-f(\bsx)|\le C_1 \delta_{j+1}$
      for all $\bsx\in U$ and $j\in\{1,\dots,k-1\}$.
  \end{enumerate}
\end{assumption}

Such densities yield certain holomorphy domains for $T_k$ as we show
in the next proposition, which is an infinite dimensional version of
\cite[Theorem 3.6]{zm1}.

\begin{proposition}\label{PROP:COR:DININF}
  Let $k\in\N$, $\bsdelta\in (0,\infty)^k$ and $0< M\le
  L<\infty$. There exist $C_1>0$, $C_2\in (0,1]$ and $C_3>0$ solely
  depending on $M$ and $L$ (but not on $k$ or $\bsdelta$) such that
  if $f_\measi$ and $f_\measii$ satisfy Assumption \ref{ass:infdens}
  with $C_1$, $M$, $L$ and $\bsdelta$, then:

  With $\bszeta=(\zeta_j)_{j=1}^k$ defined by
  \begin{equation}\label{eq:zetak2inf}
    \zeta_{j}\dfn  C_2 \delta_{j} \qquad \forall j\in\{1,\dots,k\},
  \end{equation}
  it holds for all $j\in\{1,\dots,k\}$ with
  $R_j\dfn \partial_jT_j$ (with $T$ as in \eqref{eq:T}) that
  \begin{enumerate}
  \item\label{item:cordinN:ainf}
    $R_j\in C^1(\cB_{\bszeta_{[j]}}(\U{1});\cB_{ C_3}(1))$ and
    $\Re(R_j(\bsx))\ge \frac{1}{C_3}$ for all
    $\bsx\in \cB_{\bszeta_{[j]}}(\U{1})$,
  \item\label{item:cordinN:binf} if $j\ge 2$,
    $R_j:\cB_{\bszeta_{[j-1]}}(\U{1})\times \U{1}\to
    \cB_{\frac{C_3}{\delta_j}}(1)$.
  \end{enumerate}
\end{proposition}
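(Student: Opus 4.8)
The plan is to reduce the statement to its finite-dimensional analogue \cite[Theorem 3.6]{zm1} by passing to $k$-dimensional marginal densities. Observe first that the inductive construction \eqref{eq:T} of the components $T_1,\dots,T_k$ uses the densities $f_\measi,f_\measii$ only through the marginal and conditional densities $\hat f_{*,j},f_{*,j}$ for $j\le k$ defined in \eqref{eq:fk2}, and that $\hat f_{*,j}$ for $j<k$ is obtained from $\hat f_{*,k}$ by integrating out the variables $x_{j+1},\dots,x_k$ (an application of Fubini, using boundedness of $f_*$). Hence $T_{[k]}=(T_j)_{j=1}^k$ — and in particular $R_j=\partial_jT_j$ for every $j\le k$ — coincides with the corresponding component of the $k$-dimensional KR transport pushing $\measi_k$ forward to $\measii_k$. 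It therefore suffices to show that the $k$-dimensional densities $\hat f_{\measi,k}$ and $\hat f_{\measii,k}$ on $\U{k}$ satisfy the finite-dimensional density hypothesis \cite[Assumption 3.5]{zm1} with the given $\bsdelta$, the given $L$, a suitable positive lower bound $M'$, and a suitable $C_1$, and then to quote \cite[Theorem 3.6]{zm1}.

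First I would check that, for $*\in\{\measi,\measii\}$, the map $\hat f_{*,k}$ extends to a holomorphic function on the polydisc $\cB_\bsdelta(\U{1})\subseteq\C^k$. Since $|f_*|\le L$ on $\cB_\bsdelta(\U{1})\times U$ by Assumption \ref{ass:infdens}\ref{item:cordinN:2inf} and $\mu$ is a probability measure, the integral $\int_Uf_*(\bsy_{[k]},\bst)\dd\mu(\bst)$ converges for each $\bsy_{[k]}\in\cB_\bsdelta(\U{1})$; continuity in $\bsy_{[k]}$ follows from continuity of $f_*$ (Assumption \ref{ass:infdens}\ref{item:cordinN:0inf}) and dominated convergence, and holomorphy follows by differentiating under the integral sign using Assumption \ref{ass:infdens}\ref{item:cordinN:1inf} (equivalently, via Morera's theorem and Fubini). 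The upper bound $|\hat f_{*,k}|\le L$ is immediate, and $\hat f_{*,k}$ is a positive probability density on $\U{k}$ by Fubini together with $f_*\colon U\to[M,L]$. The two small-variation conditions of \cite[Assumption 3.5]{zm1} — the finite-dimensional counterparts of \ref{item:cordinN:3inf} and \ref{item:cordinN:4inf} — are obtained by pulling the supremum inside the integral: for real $\bsx_{[k]}\in\U{k}$ and $\bsz\in\cB_\bsdelta$,
\[
  \bigl|\hat f_{*,k}(\bsx_{[k]}+\bsz)-\hat f_{*,k}(\bsx_{[k]})\bigr|
  \le\int_U\bigl|f_*(\bsx_{[k]}+\bsz,\bst)-f_*(\bsx_{[k]},\bst)\bigr|\dd\mu(\bst)\le C_1
\]
by Assumption \ref{ass:infdens}\ref{item:cordinN:3inf} applied with $\bsx=(\bsx_{[k]},\bst)\in U$, and analogously $\le C_1\delta_{j+1}$ when $\bsz\in\cB_{\bsdelta_{[j]}}\times\{0\}^{k-j}$, using \ref{item:cordinN:4inf}.

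The single delicate point — and the step I expect to be the main obstacle — is the lower bound $|\hat f_{*,k}|\ge M'>0$ on the complex polydisc, since the integral defining $\hat f_{*,k}$ could in principle suffer cancellation. This is exactly where condition \ref{item:cordinN:3inf} is needed: writing $\bsy_{[k]}=\bsx_{[k]}+\bsz$ with $\bsx_{[k]}\in\U{k}$ and $\bsz\in\cB_\bsdelta$, and using $\hat f_{*,k}(\bsx_{[k]})\ge M$ on $\U{k}$, one gets
\[
  |\hat f_{*,k}(\bsy_{[k]})|\ge\hat f_{*,k}(\bsx_{[k]})-\int_U\bigl|f_*(\bsx_{[k]}+\bsz,\bst)-f_*(\bsx_{[k]},\bst)\bigr|\dd\mu(\bst)\ge M-C_1.
\]
Since in the proposition we are free to choose $C_1$ as a function of $M$ and $L$, taking $C_1\le M/2$ yields the uniform lower bound $M'=M/2$; the operative value of $C_1$ is then the minimum of $M/2$ and the threshold that the finite-dimensional theorem requires for parameters $(M/2,L)$, which depends only on $M$ and $L$. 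With \cite[Assumption 3.5]{zm1} verified for $\hat f_{\measi,k}$ and $\hat f_{\measii,k}$, \cite[Theorem 3.6]{zm1} supplies constants $C_2\in(0,1]$ and $C_3>0$ depending only on $M/2$ and $L$, hence only on $M$ and $L$ and in particular independent of $k$ and $\bsdelta$, together with $\bszeta$ as in \eqref{eq:zetak2inf}, such that $R_j=\partial_jT_j$ for $j\le k$ satisfies both \ref{item:cordinN:ainf} and \ref{item:cordinN:binf}. This completes the plan.
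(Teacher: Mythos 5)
Your proposal is correct and follows essentially the same route as the paper's proof: reduce to the $k$-dimensional marginal densities $\hat f_{\measi,k},\hat f_{\measii,k}$, identify $T_{[k]}$ with the finite-dimensional KR transport between those marginals, verify the finite-dimensional hypothesis (\cite[Assumption 3.5]{zm1}) with $\hat M=M/2$ by bounding the complex perturbation of the integral via conditions \ref{item:cordinN:3inf}--\ref{item:cordinN:4inf} and choosing $C_1\le M/2$, and then invoke \cite[Theorem 3.6]{zm1}. The only cosmetic difference is your upper bound $|\hat f_{*,k}|\le L$ directly from \ref{item:cordinN:2inf}, whereas the paper obtains the slightly looser $\hat L=L+M/2$ via the triangle inequality; this does not affect the argument.
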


Let us sketch how this result can be used to show that $T_k$ can be
approximated by polynomial expansions. In appendix
\ref{app:lemma:scrfhol} we will verify Assumption \ref{ass:infdens}
for densities as in \eqref{eq:density}. Prop.~\ref{PROP:COR:DININF}
\ref{item:cordinN:ainf} then provides a holomorphy domain for
$\partial_kT_k$, and together with \eqref{eq:legbound} we can bound
the expansion coefficients $l_{k,\bsnu}$ of
$\partial_kT_k=\sum_{\bsnu\in\N_0^k}l_{k,\bsnu} L_\bsnu(\bsy)$.
However, there is a catch: In general one can find different
$\bsdelta$ such that Assumption \ref{ass:infdens} holds. The
difficulty is to choose $\bsdelta$ in a way that depends on $\bsnu$ to
obtain a possibly sharp bound in \eqref{eq:legbound}. To do so we will
use ideas from, e.g., \cite{CCS15} where similar calculations were
made.

The outlined argument based on Prop.~\ref{PROP:COR:DININF}
\ref{item:cordinN:ainf} suffices to prove convergence of sparse
polynomial expansions in the finite dimensional case; see
\cite[Thm.~4.6]{zm1}. %
In the infinite dimensional case where we want to approximate
$T=(T_k)_{k\in\N}$ with only finitely many degrees of freedom we
additionally need to employ Prop.~\ref{PROP:COR:DININF}
\ref{item:cordinN:binf}: for $\bsnu\in\N_0^k$ such that
$\bsnu\neq\bsnul\dfn (0)_{j=1}^k$ but $\nu_k=0$,
Prop.~\ref{PROP:COR:DININF} \ref{item:cordinN:binf} together with
\eqref{eq:legbound} implies a bound of the type
\begin{equation}\label{eq:legbound2}
  |l_{k,\bsnu}|=\left|\int_{\U{k}} (\partial_k T_k(\bsy_{[k]})-1)L_\bsnu(\bsy_{[k]})\dd\mu(\bsy_{[k]}) \right|\le C \frac{1}{\delta_k}
  \prod_{j=1}^k (1+\delta_j)^{-\nu_j},
\end{equation}
where the additional $\frac{1}{\delta_k}$ stems from
$\norm[{L^\infty(\cB_{\bszeta_{[j-1]}}(\U{1})\times
  \U{1})}]{\partial_kT_k-1}\le \frac{C_3}{\delta_k}$. Here we used the
fact $\int_{\U{k}} L_\bsnu(\bsy_{[k]})\dd\mu(\bsy_{[k]})=0$ for all
$\bsnu\neq\bsnul$ by orthogonality of the $(L_\bsnu)_{\bsnu\in\N_0^k}$
and because $L_\bsnul\equiv 1$. In case $\nu_k\neq 0$, then the factor
$\frac{1}{1+\delta_k}$ occurs on the right-hand side of
\eqref{eq:legbound} .  Hence, %
\emph{all} coefficients $l_{k,\bsnu}$ for which $\bsnu\neq\bsnul$ are
of size $O(\frac{1}{\delta_k})$. In fact one can show that even
$\sum_{\bsnu\neq\bsnul}|l_{k,\bsnu}||L_\bsnu(\bsy_{[k]})|$ is of size
$O(\frac{1}{\delta_k})$. Thus
\begin{equation*}
  \partial_kT_k(\bsy_{[k]})=\sum_{\bsnu\in\N_0^k}l_{k,\bsnu}
  L_\bsnu(\bsy_{[k]})=l_{k,\bsnul} L_\bsnul(\bsy_{[k]})+O\left(\frac{1}{\delta_k}\right).
\end{equation*}
Using $L_\bsnul\equiv 1$
\begin{equation*}
  l_{k,\bsnul} = \int_{\U{k}} \partial_kT_k(\bsy_{[k]}) L_\bsnul(\bsy_{[k]})\dd\mu(\bsy_{[k]})
  =\int_{\U{{k-1}}} T_k(\bsy_{[k-1]},1)-T_k(\bsy_{[k-1]},-1)\dd\mu(\bsy_{[k-1]})
  =2,
\end{equation*}
and therefore if $\delta_k$ is very large, since $L_\bsnul\equiv 1$
\begin{equation}\label{eq:l0trivial}
  T_k(\bsy_{[k]})=-1+\int_{-1}^{y_k}\partial_kT_k(\bsy_{[k-1]},t)\dd\mu(t)\simeq
  -1+\int_{-1}^{y_k}l_{k,\bsnul}L_\bsnul(\bsy_{[k]})\dd\mu(t)=y_k.
\end{equation}
Hence, for large $\delta_k$ we can use the trivial approximation
$T_k(\bsy_{[k]})\simeq y_k$. To address this special role played by
the $k$th variable for the $k$th component we introduce
\begin{equation}\label{eq:gamma}
  \gamma(\bsvarrho,\bsnu)\dfn \varrho_k^{-\max\{1,\nu_k\}}\prod_{j=1}^{k-1}\varrho_j^{-\nu_j} \qquad\qquad\forall \bsvarrho\in (1,\infty)^\N,~\bsnu\in\N_0^k,
\end{equation}
which, up to constants, corresponds to the minimum of
\eqref{eq:legbound} and \eqref{eq:legbound2}.  This quantity can be
interpreted as measuring the importance of the monomial $\bsy^\bsnu$
in the ansatz space used for the approximation of $T_k$, and we will
use it to construct such ansatz spaces.
\begin{remark}
  To explain the key ideas, in this section we presented the
  approximation of $T_k$ via a Legendre expansion of $\partial_k T_k$.
  For the proofs of our approximation results in
  Sec.~\ref{sec:polinfty} we instead approximate
  $\sqrt{\partial_k T_k}-1$ %
  with truncated Legendre expansions. This will guarantee the
  approximate transport to satisfy the monotonicity property as
  explained in Sec.~\ref{sec:polinfty}.
  \end{remark}

\section{Convergence of the transport}\label{sec:polinfty}

{
We are now in position to state an algebraic convergence result for
approximations of infinite dimensional transport maps $T:U\to U$
associated to densities of the type \eqref{eq:density}.

For a triangular approximation $\tilde T=(\tilde T_k)_{k\in\N}$ to $T$
it is desirable that it retains the monotonicity and
bijectivity properties, i.e., $\partial_k\tilde T_k>0$ and
$\tilde T:U\to U$ is bijective. The first guarantees that $\tilde T$
is injective and easy to invert (by subsequently solving the one
dimensional equations $x_k=\tilde T_k(y_1,\dots,y_k)$ for $y_k$
starting with $k=1$), and for the purpose of generating samples, the
second property ensures that for $\bsy\sim \measi$, the transformed
sample $\tilde T(\bsy)\sim \tilde T_\sharp\measi$ also belongs to $U$.
These constraints are hard to enforce for polynomial approximations.
For this reason, we use the same rational parametrization we
introduced in \cite{zm1} for the finite dimensional case:
For a
set of $k$-dimensional multiindices $\Lambda\subseteq \N_0^k$,
define
\begin{equation*}
  \bbP_\Lambda\dfn {\rm span}\set{\bsy^\bsnu}{\bsnu\in\Lambda}.
\end{equation*}
The dimension of this space is equal to the cardinality of $\Lambda$,
which we denote by $|\Lambda|$. Let $p_k\in \bbP_\Lambda$ (where
$\Lambda$ remains to be chosen) be a polynomial approximation to
$\sqrt{\partial_kT_k}-1$. Set for $\bsy\in \U{k}$
\begin{equation}\label{eq:tTk}
  \tilde T_k(\bsy) \dfn -1 + 2 \frac{\int_{-1}^{y_k}\int_{\U{{k-1}}}(p_k(\bsy_{[k-1]},t)+1)^2\dd\mu(\bsy_{[k-1]})\dd\mu(t)}{\int_{\U{k}}(p_k(\bsy)+1)^2\dd\mu(\bsy)}.
\end{equation}
It is easily checked that $\tilde T_k$ satisfies both monotonicity and
bijectivity as long as $p_k\neq -1$. Thus we end up with a rational
function $\tilde T_k$, but we emphasize that the use of rational
functions instead of polynomials is not due to better approximation
capabilities, but solely to guarantee bijectivity of
$\tilde T:U\to U$.
\begin{remark}
Observe that $\Lambda=\emptyset$ gives the trivial
approximation $p_k\dfn 0\in\bbP_\emptyset$ and $\tilde T_k(\bsy)=y_k$.
\end{remark}

The following theorem yields an algebraic convergence rate
\emph{independent of the dimension} (since the dimension is infinity)
in terms of the total number of degrees of freedom for the
approximation of $T$.
Therefore the curse of dimensionality is overcome for
densities as in Assumption \ref{ass:density}.}

\begin{theorem}\label{THM:TINF}
  Let $f_\measi$, $f_\measii:U\to (0,\infty)$ be two probability
  densities satisfying Assumption \ref{ass:densities} for some
  $p\in (0,1)$.
  Set
  $b_j\dfn
  \max\{\norm[Z]{\psi_{\measi,j}},\norm[Z]{\psi_{\measii,j}}\}$,
  $j\in\N$.

  There exist $\alpha>0$ and $C>0$ such
  that the following holds: For $j\in\N$ set
  \begin{equation}\label{eq:xij}
    \varrho_j\dfn 1+ \frac{\alpha}{b_j},
  \end{equation}
  and with $\gamma(\bsvarrho,\bsnu)$ as in \eqref{eq:gamma} define
  \begin{equation*}
    \Lambda_{\eps,k}\dfn \set{\bsnu\in\N_0^k}{\gamma(\bsvarrho,\bsnu)\ge\eps}\qquad\forall k\in\N.
  \end{equation*}
  
  For each $k\in\N$ there exists a polynomial
  $p_k\in\bbP_{\Lambda_{\eps,k}}$ such that with the components
  $\tilde T_{\eps,k}$ as in \eqref{eq:tTk},
  $\tilde T_\eps=(\tilde T_{\eps,k})_{k\in\N}:U\to U$ is a monotone triangular
  bijection.
  For all $\eps>0$, it holds that $N_\eps\dfn \sum_{k\in\N} |\Lambda_{\eps,k}|<\infty$
  and
  \begin{subequations}\label{eq:Talgebraic}
  \begin{equation}\label{eq:Talgebraica}
    \sum_{k\in\N}\norm[{L^{\infty}(\U{k})}]{T_k-\tilde T_{\eps,k}}\le C N_\eps^{-\frac{1}{p}+1}
  \end{equation}
  and
  \begin{equation}\label{eq:Talgebraicb}
    \sum_{k\in\N}\norm[{L^{\infty}(\U{k})}]{\partial_kT_k-
      \partial_k\tilde T_{\eps,k}}\le C N_\eps^{-\frac{1}{p}+1}.
  \end{equation}
  \end{subequations}
\end{theorem}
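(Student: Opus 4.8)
The plan is to build the approximation $\tilde T_\eps$ by applying Proposition~\ref{PROP:COR:DININF} with a family of holomorphy polydiscs $\bsdelta$ that is \emph{adapted to each target multiindex} $\bsnu$, exactly as in the finite-dimensional argument of \cite[Thm.~4.6]{zm1}, and then to control the infinite sum over $k$ using the $\ell^p$-summability of $(b_j)_j$. First I would verify (in the appendix, using Assumption~\ref{ass:density} for both $f_\measi$ and $f_\measii$) that the densities satisfy Assumption~\ref{ass:infdens} with a polydisc radius vector $\bsdelta$ whose entries can be taken of the form $\delta_j \asymp \varrho_j^{\nu_j}$ (resp.\ $\delta_k \asymp \varrho_k^{\max\{1,\nu_k\}}$ for the distinguished $k$th slot), with $C_1$, $M$, $L$ uniform; the key point is that $\sum_j b_j^p < \infty$ together with the Lipschitz-type bounds \ref{item:cordinN:3inf}--\ref{item:cordinN:4inf} forces $C_1$ to stay bounded precisely because $\varrho_j - 1 = \alpha/b_j$ and the differences $|f(\bsx+\bsy)-f(\bsx)|$ are controlled by $\sum_{j} \|\psi_j\|_Z (\delta_j + \dots)$ through holomorphy of $\frf$. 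Feeding this $\bsdelta$ into Proposition~\ref{PROP:COR:DININF} \ref{item:cordinN:ainf}--\ref{item:cordinN:binf} and then into the Legendre estimates \eqref{eq:legbound}, \eqref{eq:legbound2} yields, for each $k$ and each $\bsnu\in\N_0^k$, a bound on the Legendre coefficient of $\sqrt{\partial_k T_k}-1$ of size $C\,\gamma(\bsvarrho,\bsnu)$ with $\gamma$ as in \eqref{eq:gamma} (the square root is handled exactly as in \cite{zm1}: it inherits analyticity on the same polydisc since $\Re(\partial_k T_k) \ge 1/C_3 > 0$, and its $L^\infty$ norm is controlled likewise).

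Next I would choose $p_k \in \bbP_{\Lambda_{\eps,k}}$ to be the truncated Legendre expansion of $\sqrt{\partial_k T_k}-1$ restricted to the index set $\Lambda_{\eps,k}=\{\bsnu : \gamma(\bsvarrho,\bsnu)\ge\eps\}$, and define $\tilde T_{\eps,k}$ by \eqref{eq:tTk}. Monotonicity and bijectivity of $\tilde T_\eps$ are then automatic from the rational parametrization (as noted after \eqref{eq:tTk}), so the structural part of the statement is free. For the error bounds, one first controls $\|\sqrt{\partial_k T_k}-(p_k+1)\|_{L^\infty(\U{k})}$ by the tail sum $\sum_{\bsnu\notin\Lambda_{\eps,k}} |l_{k,\bsnu}^{(\sqrt{})}|$, which by the coefficient bound is $\lesssim \sum_{\gamma(\bsvarrho,\bsnu)<\eps}\gamma(\bsvarrho,\bsnu)$; summing over $k$ as well, the total error is $\lesssim \sum_{k\in\N}\sum_{\bsnu\in\N_0^k, \gamma<\eps} \gamma(\bsvarrho,\bsnu)$. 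One then passes from the $\sqrt{\partial_k T_k}$-error to the $\partial_k T_k$-error (factor out $\sqrt{\partial_k T_k}+(p_k+1)$, bounded uniformly), to the $T_k$-error by integrating in $x_k\in[-1,1]$ and accounting for the normalizing denominator in \eqref{eq:tTk}, which is itself $1+O(\|p_k+1\|^2-1)$-close to the correct value; each of these steps costs only a uniform constant. This reduces \eqref{eq:Talgebraica}--\eqref{eq:Talgebraicb} to two estimates in terms of $\eps$: a \emph{cardinality estimate} $N_\eps = \sum_k |\Lambda_{\eps,k}| \le C \eps^{-p}$, and a \emph{tail estimate} $\sum_k \sum_{\bsnu:\gamma<\eps} \gamma(\bsvarrho,\bsnu) \le C \eps^{1-p}$; combining them gives $\sum_k\|T_k-\tilde T_{\eps,k}\| \le C\eps^{1-p} = C(C\eps^{-p})^{-(1/p-1)} \le C N_\eps^{-1/p+1}$, and similarly that $N_\eps<\infty$.

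Both of these are summability statements about $\gamma(\bsvarrho,\cdot)$ over the countable union $\bigcup_k \N_0^k$ with weights $\varrho_j = 1+\alpha/b_j$, and they are exactly the type of estimate established in the sparse-approximation literature (\cite{CCS15}, and in \cite{zm1} for the finite-dimensional truncations): one shows $\sum_{k}\sum_{\bsnu\in\N_0^k}\gamma(\bsvarrho,\bsnu)^q<\infty$ for every $q>p$, using $\sum_j b_j^p<\infty$ to pick $\alpha$ small enough that $\sum_j (\varrho_j^{-1})^{?}$-type series converge, and then a standard argument (e.g.\ a layer-cake / Stechkin-type bound) converts the $\ell^q$-summability of the sequence $(\gamma(\bsvarrho,\bsnu))$ into the two displayed bounds $N_\eps\lesssim\eps^{-p}$ and $\sum_{\gamma<\eps}\gamma\lesssim\eps^{1-p}$. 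The main obstacle is the uniformity in $k$ of all constants throughout: one must ensure that the $C_1$ in Assumption~\ref{ass:infdens}, the constants $C_2,C_3$ from Proposition~\ref{PROP:COR:DININF}, the square-root and denominator manipulations, and the Legendre constant $C$ in \eqref{eq:legbound} (which depends on $\min_j\delta_j$!) do not degrade as $k\to\infty$ or as the polydisc becomes anisotropic. Handling the $\min_j\delta_j$ dependence in \eqref{eq:legbound} is the delicate point, and is dealt with by the special treatment of the $k$th coordinate encoded in $\gamma$ and the bound \eqref{eq:legbound2}: the distinguished variable is allowed a large radius, while the remaining radii $\delta_j$, $j<k$, are bounded below in terms of $\bsnu$ in a controlled way, so that the effective constant is absorbed into the final $\ell^q$ summation. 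I would also need the measurability/well-definedness and the pushforward-density formula from Theorem~\ref{THM:KNOTHEINF} to make sense of $\partial_k T_k$ and of the formula \eqref{eq:tTk} at the level of the infinite product, and Lemma~\ref{lemma:bijective} to conclude bijectivity of the full $\tilde T_\eps:U\to U$ from the componentwise monotone bijectivity.
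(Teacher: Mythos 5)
Your overall strategy is the same as the paper's: verify Assumption~\ref{ass:infdens} with $\bsnu$-adapted polydisc radii, feed this into Prop.~\ref{PROP:COR:DININF}, bound the Legendre coefficients $l_{k,\bsnu}$ of $Q_k=\sqrt{\partial_kT_k}-1$, take $p_k$ to be the $\Lambda_{\eps,k}$-truncated expansion, and convert $\ell^p$-summability of $\gamma(\bsvarrho,\cdot)$ into the cardinality bound $N_\eps\lesssim\eps^{-p}$ and the tail bound $\lesssim\eps^{1-p}$. However, there are two genuine imprecisions in the argument as stated that would make it fail without repair.

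First, the claimed coefficient bound $|l_{k,\bsnu}|\le C\,\gamma(\bsvarrho,\bsnu)$ is not attainable, and the polydisc choice ``$\delta_j\asymp\varrho_j^{\nu_j}$'' is not what one can afford. The constraint from Assumption~\ref{ass:infdens} (and ultimately from the need to keep $\sum_j\delta_j\norm[Z]{\psi_j}$ uniformly bounded, so that $\frf$ remains evaluable on a fixed compact neighborhood of $K$) forces the radii to be distributed among the coordinates proportionally to $\bsnu$, roughly $\delta_j\sim\kappa_j+\tau\nu_j/(|\bsnu_{[J:k-1]}|\,b_j)$, not uniformly like $\alpha/b_j$ and certainly not exponentially in $\nu_j$. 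Plugging such a $\bsnu$-dependent polydisc into \eqref{eq:legbound} gives, instead of $\gamma(\bsvarrho,\bsnu)$, a bound of the form $w_\bsnu\,\frac{|\bsnu_F|^{|\bsnu_F|}}{\bsnu_F^{\bsnu_F}}\,\gamma(\tilde\bsvarrho,\bsnu)$ where $w_\bsnu=\prod_j(1+2\nu_j)^\theta$ collects the polynomial factors in \eqref{eq:legbound} and $\frac{|\bsnu|^{|\bsnu|}}{\bsnu^{\bsnu}}$ is a genuinely unbounded combinatorial factor. Absorbing this into the summability is exactly the point of Lemma~\ref{lemma:Sxi} (in the spirit of \cite[Thm.~7.2]{CDS10} and \cite[Lemma~3.11]{ZS17}), and is what lets one conclude $\sum_k\sum_\bsnu w_\bsnu|l_{k,\bsnu}|\gamma(\bsvarrho,\bsnu)^{p-1}<\infty$; the tail estimate you want then follows from this weighted $\ell^1$-type bound, not from a pointwise $|l_{k,\bsnu}|\lesssim\gamma$ which is false.

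Second, passing from the $L^\infty$ error in $Q_k$ to the errors in $T_k$ and $\partial_kT_k$ via the rational representation \eqref{eq:tTk} is not unconditionally ``a uniform constant'': the finite-dimensional lemma used for this (the analogue of \cite[Lemma C.4]{zm1}) requires the precondition $\norm[L^\infty(\U{k})]{Q_k-p_k}<K/(1+\norm[L^\infty(\U{k})]{Q_k})$ before the $\cO(\norm{Q_k-p_k})$ error estimate applies. When this precondition fails for some $k$ and $\eps$, one must redefine $p_k=0$ (so $\tilde T_{\eps,k}(\bsy)=y_k$) and use the trivial $L^\infty$ bounds $\norm{T_k-\tilde T_{\eps,k}}\le 2$ and $\norm{\partial_kT_k-\partial_k\tilde T_{\eps,k}}\le 2(1+C_6)^2$, re-expressing these in terms of the tail sum. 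This case distinction is essential to obtain constants uniform in $k$ and $\eps$, and without it the sum over $k$ could not be controlled. Both issues are repairable, but they are precisely the technical points that the paper's Lemma~\ref{LEMMA:SCRFHOL}, Lemma~\ref{lemma:Sxi}, and Step~5 of the proof are designed to settle, and your proposal currently elides them.
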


\begin{remark}\label{rmk:kidentity}
  Fix $\eps>0$.  Since $N_\eps<\infty$, there exists $k_0\in\N$ such
  that for all $k\ge k_0$ holds $\Lambda_{\eps,k}=\emptyset$ and thus
  $\tilde T_{\eps,k}(\bsy_{[k]})=y_k$.
\end{remark}

Switching the roles of $\measi$ and $\measii$, Thm.~\ref{THM:TINF}
also yields an approximation result for the inverse transport
$S=T^{-1}$ by some rational functions $\tilde S_k$ as in
\eqref{eq:tTk}. Moreover, if $\tilde T$ is the rational approximation
from Thm.~\ref{THM:TINF}, then its inverse $\tilde T^{-1}:U\to U$
(whose components are not necessarily rational functions) also
satisfies an error bound of the type \eqref{eq:Talgebraic} as we show
next.

{
\begin{corollary}\label{COR:SINF}
    Consider the setting of Thm.~\ref{THM:TINF}. Denote
    $S\dfn T^{-1}:U\to U$ and $\tilde S_\eps\dfn \tilde T_\eps^{-1}:U\to U$.
    Then there exists a constant $C$ such that for all $\eps>0$
  \begin{subequations}\label{eq:Salgebraic}
  \begin{equation}\label{eq:Ssumbound}
    \sum_{k\in\N}\norm[{L^{\infty}(\U{k})}]{S_k-\tilde S_{\eps,k}}\le C N_\eps^{-\frac{1}{p}+1}
  \end{equation}
  and
  \begin{equation}\label{eq:dSsumbound}
    \sum_{k\in\N}\norm[{L^{\infty}(\U{k})}]{\partial_kS_k-
      \partial_k\tilde S_{\eps,k}}\le C N_\eps^{-\frac{1}{p}+1}.
  \end{equation}
  \end{subequations}
\end{corollary}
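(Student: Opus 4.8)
The plan is to derive the bound on $S = T^{-1}$ from the already-established bound on $T - \tilde T_\eps$ in Theorem~\ref{THM:TINF}, exploiting the triangular structure componentwise. The key observation is that since both $T$ and $\tilde T_\eps$ are triangular monotone bijections, their inverses $S$ and $\tilde S_\eps$ are as well, and the $k$th component $S_k(\bsy_{[k]})$ is characterized by the relation $T_k(\bsy_{[k-1]}, S_k(\bsy_{[k]}))$, where the first $k-1$ arguments are themselves obtained from the lower-index inverse components; similarly for $\tilde S_{\eps,k}$. So the strategy is an inductive estimate on $k$: assuming a bound on $\|S_j - \tilde S_{\eps,j}\|_{L^\infty(\U{j})}$ for $j < k$, bound $\|S_k - \tilde S_{\eps,k}\|_{L^\infty(\U{k})}$ in terms of $\|T_k - \tilde T_{\eps,k}\|_{L^\infty(\U{k})}$, $\|\partial_k T_k - \partial_k\tilde T_{\eps,k}\|_{L^\infty(\U{k})}$, and the errors from the lower components. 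The crucial quantitative inputs are that $\partial_k T_k$ is bounded below (from Proposition~\ref{PROP:COR:DININF}\ref{item:cordinN:ainf}, $\Re(\partial_k T_k) \ge 1/C_3$, so $T_k$ and likewise $\tilde T_{\eps,k}$ are bi-Lipschitz in the last variable with constants independent of $k$) and that the components $T_k$ are Lipschitz in the earlier variables with summable Lipschitz constants — this summability is exactly what makes the infinite composition controllable.

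Concretely, first I would record uniform bi-Lipschitz bounds: there exist constants $0 < c \le C_3$ such that $c \le \partial_k T_k \le C_3$ and $c \le \partial_k\tilde T_{\eps,k} \le C_3$ on $\U{k}$ for all $k$ (for $\tilde T_{\eps,k}$ this follows from \eqref{eq:tTk} together with the convergence \eqref{eq:Talgebraicb}, which forces $\partial_k\tilde T_{\eps,k}$ uniformly close to $\partial_k T_k$ for small $\eps$; one should also handle the trivial components where $\partial_k\tilde T_{\eps,k}\equiv 1$). Second, I would establish that $T_k$ is Lipschitz in $\bsy_{[k-1]}$ with constant $\ell_k$ where $\sum_k \ell_k < \infty$ — this should come from the holomorphy domain bounds in Proposition~\ref{PROP:COR:DININF}, analogously to how the convergence of $\det dT$ was obtained in Theorem~\ref{THM:KNOTHEINF}, since the relevant derivative sizes are governed by $1/\delta_j \sim b_j/\alpha$, which are $\ell^p$-summable hence $\ell^1$-summable. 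Third, with $\bsx = S(\bsy)$ and $\tilde\bsx = \tilde S_\eps(\bsy)$, I would write
\begin{align*}
  y_k &= T_k(\bsx_{[k-1]}, x_k) = \tilde T_{\eps,k}(\tilde\bsx_{[k-1]}, \tilde x_k),
\end{align*}
subtract, and use the mean value theorem in the last variable together with the Lipschitz bound in the earlier variables to get
\begin{align*}
  c\,|x_k - \tilde x_k| &\le |T_k(\tilde\bsx_{[k-1]}, x_k) - \tilde T_{\eps,k}(\tilde\bsx_{[k-1]}, x_k)| + \ell_k \sum_{j<k} |x_j - \tilde x_j| \\
  &\le \|T_k - \tilde T_{\eps,k}\|_{L^\infty(\U{k})} + \ell_k \sum_{j<k} \|S_j - \tilde S_{\eps,j}\|_{L^\infty(\U{j})}.
\end{align*}
Summing over $k$ and using $\sum_k \ell_k < \infty$ closes the recursion via a discrete Gr\"onwall / Neumann-series argument, yielding $\sum_k \|S_k - \tilde S_{\eps,k}\|_{L^\infty(\U{k})} \le C' \sum_k \|T_k - \tilde T_{\eps,k}\|_{L^\infty(\U{k})}$, and \eqref{eq:Ssumbound} follows from \eqref{eq:Talgebraica}.

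For \eqref{eq:dSsumbound}, I would differentiate the defining identity $T_k(S_{[k-1]}(\bsy), S_k(\bsy)) = y_k$ in $y_k$ to get $\partial_k S_k = 1/(\partial_k T_k \circ S)$ (the inner components do not depend on $y_k$ by triangularity), and similarly $\partial_k\tilde S_{\eps,k} = 1/(\partial_k\tilde T_{\eps,k} \circ \tilde S_\eps)$; then the difference is controlled using the lower bound $c$ on both denominators, the Lipschitz continuity of $\partial_k T_k$ (again with summable moduli in the earlier variables, from Proposition~\ref{PROP:COR:DININF}), the already-proven bound on $\|S_k - \tilde S_{\eps,k}\|$, and the bound \eqref{eq:Talgebraicb} on $\|\partial_k T_k - \partial_k\tilde T_{\eps,k}\|$. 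I expect the main obstacle to be bookkeeping the dimension-independence of all constants: one must verify that the Lipschitz constants $\ell_k$ (and the moduli of continuity of $\partial_k T_k$ in the earlier variables) are genuinely summable with a sum bounded independently of everything, which requires tracing through Proposition~\ref{PROP:COR:DININF} and the verification of Assumption~\ref{ass:infdens} for densities of the form \eqref{eq:density} — essentially the same estimates already needed for Theorem~\ref{THM:TINF}, so the machinery is in place, but it must be assembled carefully so that the discrete Gr\"onwall constant does not secretly depend on $k$.
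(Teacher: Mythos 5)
Your overall plan is sound and arrives at the right conclusion, but your route to \eqref{eq:Ssumbound} differs meaningfully from the paper's. You invert $T_k$ implicitly through the identity $T_k(\bsx_{[k-1]},x_k)=\tilde T_{\eps,k}(\tilde\bsx_{[k-1]},\tilde x_k)$ with $\bsx=S(\bsy)$, $\tilde\bsx=\tilde S_\eps(\bsy)$, then use the lower bound on $\partial_kT_k$ and Lipschitz continuity of $T_k$ in the earlier variables, which produces a recursion coupling $\|S_k-\tilde S_{\eps,k}\|$ to $\sum_{j<k}\|S_j-\tilde S_{\eps,j}\|$, closed by a discrete Gr\"onwall estimate. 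The paper avoids the recursion entirely: it exploits the symmetry of Assumption~\ref{ass:densities} in $\measi,\measii$ to obtain Lipschitz bounds on $S_k$ itself (Lemma~\ref{lemma:TSlip} applied with the roles swapped), writes $\|S_k-\tilde S_{\eps,k}\|_\infty=\|S_k\circ\tilde T_{[k]}-\tilde S_k\circ\tilde T_{[k]}\|_\infty=\|S_k\circ\tilde T_{[k]}-S_k\circ T_{[k]}\|_\infty$, and bounds this directly in terms of $\sum_j\|T_j-\tilde T_{\eps,j}\|$ without ever invoking $\|S_j-\tilde S_{\eps,j}\|$ for $j<k$. Both approaches require the same key quantitative input (Lipschitz constants of size $\sim b_kb_j$ for the $j$th variable with $\sum_j b_j<\infty$, which comes from Prop.~\ref{PROP:COR:DININF}\ref{item:cordinN:binf}, not \ref{item:cordinN:ainf} as you cite), so the work is comparable; the paper's version is a bit cleaner since the Gr\"onwall product is not needed. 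Your treatment of \eqref{eq:dSsumbound} via $\partial_kS_k=1/(\partial_kT_k\circ S_{[k]})$ is exactly what the paper does. One small point to tighten: the uniform lower bound on $\partial_k\tilde T_{\eps,k}$ does not follow from ``\eqref{eq:Talgebraicb} for small $\eps$'' as you suggest; what is actually used is the uniform \emph{upper} bound $|\partial_k\tilde T_{\eps,k}|\le 1+K+C$ (valid for all $\eps$ with $N_\eps\ge 1$, and trivially for the identity components), which yields the lower bound on $\partial_k\tilde S_{\eps,k}$ needed for the reciprocal's Lipschitz constant.
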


Note that both $S$ and $\tilde S$ in Cor.~\ref{COR:SINF} are
monotonic, triangular bijections as they are the inverses of such
maps.}

\section{Convergence of the pushforward measures}\label{sec:measinfty}
Thm.~\ref{THM:TINF} established smallness of
$\sum_{k\in\N}|\partial_k(T_k-\tilde T_k)|$. The relevance of this
term stems from the formal calculation (cp.~\eqref{eq:detinf})
\begin{equation*}
  |\det dT-\det d\tilde T|=\left|\prod_{k\in\N}\partial_kT_k-
    \prod_{k\in\N}\partial_k\tilde T_k\right|
  \le \sum_{k\in\N}|\partial_kT_k-\partial_k\tilde T_k| \prod_{j<k}|\partial_j T_j|\prod_{i>k}|\partial_iT_i|.
\end{equation*}
Assuming that we can bound the last two products, the determinant
$\det d\tilde T$ converges to $\det d T$ at the rate given in
Thm.~\ref{THM:TINF}. This will allow us to bound the Hellinger
distance (H), the total variation distance (TV), and the
Kullback-Leibler divergence (KL) between $\tilde T_\sharp\measi$ and
$\measii$, as we show in the following theorem. Recall that for two
probability measures $\nu\ll\mu$, $\eta\ll\mu$ on $U$ with densities
$f_\nu=\frac{\ddd\nu}{\ddd\mu}$, $f_\eta=\frac{\ddd\eta}{\ddd\mu}$,
\begin{equation*}
  {\rm H}(\nu,\eta)=\frac{1}{\sqrt{2}}\norm[L^2(U,\mu)]{\sqrt{f_\nu}-\sqrt{f_\eta}},\quad
  {\rm TV}(\nu,\eta)=\frac{1}{2}\norm[L^1(U,\mu)]{f_\nu-f_\eta},\qquad
  {\rm KL}(\nu,\eta)=\int_U \log\left(\frac{f_\nu}{f_\eta}\right)\dd\nu.
\end{equation*}

\begin{theorem}\label{THM:MEASCONVINF}
  Let $f_\measi$, $f_\measii$ satisfy Assumption \ref{ass:densities}
  for some $p\in (0,1)$, and let $\tilde T_\eps:U\to U$ be the
  approximate transport from Thm.~\ref{THM:TINF}.

  Then there exists $C>0$ such that for
  ${\rm dist}\in\{{\rm H},{\rm TV},{\rm KL}\}$ and every $\eps>0$
  \begin{equation}\label{eq:measdiffinf}
    {\rm dist}((\tilde T_\eps)_\sharp \mu,\measii)
    \le C N_\eps^{-\frac{1}{p}+1}.
  \end{equation}
\end{theorem}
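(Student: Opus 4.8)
The plan is to reduce all three distances to the quantity controlled by Thm.~\ref{THM:TINF}, namely $\sum_{k\in\N}\norm[{L^\infty(\U{k})}]{\partial_kT_k-\partial_k\tilde T_{\eps,k}}$ together with $\sum_{k\in\N}\norm[{L^\infty(\U{k})}]{T_k-\tilde T_{\eps,k}}$. Since $\measi=\mu$ here (so that $f_\measi\equiv 1$), the change-of-variables formula from Thm.~\ref{THM:KNOTHEINF}\ref{item:rdder} gives that the density of $(\tilde T_\eps)_\sharp\mu$ w.r.t.\ $\mu$ is $\bsy\mapsto (\det d\tilde T_\eps(\tilde T_\eps^{-1}(\bsy)))^{-1}$, and similarly $f_\measii(\bsy)=(\det dT(T^{-1}(\bsy)))^{-1}$. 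So the first step is to establish that $\det d\tilde T_\eps$ is well-defined (an infinite product converging in $C^0(U;\R_+)$) and close to $\det dT$ in $L^\infty(U)$. For this I would use the formal telescoping estimate already displayed before the theorem statement:
\[
  |\det dT(\bsy)-\det d\tilde T_\eps(\bsy)|\le \sum_{k\in\N}|\partial_kT_k(\bsy_{[k]})-\partial_k\tilde T_{\eps,k}(\bsy_{[k]})|\prod_{j\ne k}|\partial_jT_j(\bsy_{[j]})\vee\partial_j\tilde T_{\eps,j}(\bsy_{[j]})|.
\]
To make this rigorous I need uniform two-sided bounds $0<c\le\partial_kT_k\le C$ and the same for $\partial_k\tilde T_{\eps,k}$, \emph{and} summability of $|\partial_kT_k-1|$ and $|\partial_k\tilde T_{\eps,k}-1|$, so that the products $\prod_{j\ne k}(\cdots)$ are uniformly bounded. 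The bounds on $\partial_kT_k$ come from Prop.~\ref{PROP:COR:DININF}, and by Rmk.~\ref{rmk:kidentity} only finitely many $\tilde T_{\eps,k}$ differ from the identity, so for $\tilde T_\eps$ the product is in fact finite; combining with Thm.~\ref{THM:TINF}\eqref{eq:Talgebraicb} yields $\norm[{L^\infty(U)}]{\det dT-\det d\tilde T_\eps}\le C N_\eps^{-1/p+1}$. I would also record that $\det d\tilde T_\eps$ is bounded above and below away from $0$ uniformly in $\eps$ (for $\eps$ small), since $\det dT$ is and the difference is small.

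The second step handles the composition with the inverse maps. I need $\norm[{L^\infty(U)}]{(\det d\tilde T_\eps)\circ\tilde T_\eps^{-1}-(\det dT)\circ T^{-1}}$ small. Split as $[(\det d\tilde T_\eps - \det dT)\circ\tilde T_\eps^{-1}] + [(\det dT)\circ\tilde T_\eps^{-1}-(\det dT)\circ T^{-1}]$. The first bracket is bounded by $\norm[{L^\infty(U)}]{\det dT-\det d\tilde T_\eps}$ from Step~1. For the second bracket I need a modulus-of-continuity estimate for $\det dT$ together with a bound on $\norm[{L^\infty(U)}]{\tilde T_\eps^{-1}-T^{-1}}$; the latter follows from Cor.~\ref{COR:SINF}\eqref{eq:Ssumbound} (which gives $\sum_k\norm[{L^\infty}]{S_k-\tilde S_{\eps,k}}\le C N_\eps^{-1/p+1}$, hence control of the sup-distance of the maps in the product metric on $U$), and Lipschitz-type continuity of $\det dT$ in each coordinate can be extracted from the analyticity / Legendre-coefficient bounds of Sec.~\ref{sec:infanalyticity} (the tails are summable, so a telescoping argument over coordinates converges). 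This gives $\norm[{L^\infty(U)}]{f_{(\tilde T_\eps)_\sharp\mu}-f_\measii}\le C N_\eps^{-1/p+1}$, with $f_{(\tilde T_\eps)_\sharp\mu}$ bounded above and below uniformly in $\eps$.

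The final step converts the $L^\infty$ density bound into the three claimed distances. For TV this is immediate: ${\rm TV}=\tfrac12\norm[{L^1(U,\mu)}]{f_{(\tilde T_\eps)_\sharp\mu}-f_\measii}\le\tfrac12\norm[{L^\infty(U)}]{f_{(\tilde T_\eps)_\sharp\mu}-f_\measii}$ since $\mu(U)=1$. For H we use $|\sqrt a-\sqrt b|\le|a-b|/(2\sqrt{\min(a,b)})$ with the uniform lower bound on both densities, so ${\rm H}\le C\norm[{L^\infty(U)}]{f_{(\tilde T_\eps)_\sharp\mu}-f_\measii}$. For KL, since ${\rm KL}(\nu,\eta)\le\int_U \frac{(f_\nu-f_\eta)^2}{f_\eta}\dd\mu$ (a standard bound, using $\log t\le t-1$), and $f_\measii$ is bounded below, ${\rm KL}\le C\norm[{L^\infty(U)}]{f_{(\tilde T_\eps)_\sharp\mu}-f_\measii}^2\le C N_\eps^{-1/p+1}$ (note the squared rate is even smaller, so the stated bound holds a fortiori).

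I expect the main obstacle to be Step~2, specifically making the ``modulus of continuity of $\det dT$ on the infinite-dimensional $U$'' argument rigorous: one must show $\det dT$ is uniformly continuous in the product topology with a quantitative rate matching the coordinatewise decay of $\tilde T_\eps^{-1}-T^{-1}$, which requires carefully combining the per-coordinate Lipschitz bounds on $\partial_jT_j$ (from the holomorphy domains $\cB_{\bszeta_{[j]}}(\U{1})$ in Prop.~\ref{PROP:COR:DININF}) with the summability $\sum_j\norm[Z]{\psi_{*,j}}^p<\infty$ to control the infinite product. Everything else is either a direct appeal to Thm.~\ref{THM:TINF}/Cor.~\ref{COR:SINF} or a routine elementary inequality.
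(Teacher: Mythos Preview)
Your approach is viable but takes a detour that the paper avoids. You work with the forward maps and the identity $f_\measii(\bsy)=(\det dT(T^{-1}(\bsy)))^{-1}$, which forces your Step~2: after bounding $\|\det dT-\det d\tilde T_\eps\|_{L^\infty(U)}$, you must compose with the inverse maps and therefore need a quantitative modulus-of-continuity estimate for $\det dT$ on the infinite product $U$. You correctly flag this as the obstacle, and it can indeed be carried out using the coordinatewise Lipschitz bounds of Lemma~\ref{lemma:TSlip}, but it is the most laborious part of your plan.

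The paper sidesteps Step~2 entirely by working with the \emph{inverse} maps $S=T^{-1}$ and $\tilde S_\eps=\tilde T_\eps^{-1}$ from the outset. Using Thm.~\ref{THM:KNOTHEINF} (Rmk.~\ref{rmk:knotheS}) one has $f_\measii(\bsy)=\det dS(\bsy)\,f_\measi(S(\bsy))$ and, after a short verification that only involves finitely many nontrivial components (Rmk.~\ref{rmk:kidentity}), $\frac{\ddd(\tilde T_\eps)_\sharp\measi}{\ddd\mu}(\bsy)=\det d\tilde S_\eps(\bsy)\,f_\measi(\tilde S_\eps(\bsy))$. The density difference then splits as
\[
|\det dS-\det d\tilde S_\eps|\cdot|f_\measi\circ S|\;+\;|\det d\tilde S_\eps|\cdot|f_\measi\circ S-f_\measi\circ\tilde S_\eps|,
\]
with no further composition by an inverse. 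The first factor is controlled by a product-difference lemma (Lemma~\ref{LEMMA:DET}) together with Cor.~\ref{COR:SINF}\eqref{eq:dSsumbound}; the second by Lipschitz continuity of $\frf_\measi$ on the compact image $\{\sum_j y_j\psi_{\measi,j}:\bsy\in U\}$ together with Cor.~\ref{COR:SINF}\eqref{eq:Ssumbound}. So the step you identify as hardest simply does not arise. What the paper buys is brevity and the avoidance of any delicate continuity estimate for an infinite product; what your route buys is that one never has to establish the pushforward formula \eqref{eq:pushforwardclaim} for $\tilde T_\eps$, since you work directly with $\det dT$ and $\det d\tilde T_\eps$.

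One further remark: you take $\measi=\mu$ (reading the displayed \eqref{eq:measdiffinf} literally), which makes $f_\measi\equiv 1$ and removes the second term above. The paper's argument keeps $\measi$ general (as in Assumption~\ref{ass:densities}) and proves the bound for $(\tilde T_\eps)_\sharp\measi$; the Lipschitz estimate on $\frf_\measi$ is needed precisely for that term. Your Step~3 (conversion to ${\rm TV},{\rm H},{\rm KL}$ from a uniform density bound plus a uniform lower bound on $f_\measii$) matches the paper's.
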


{Next we treat the Wasserstein distance. Recall that for a Polish
  space $(M,d)$ (i.e., $M$ is separable and complete with the metric
  $d$ on $M$) and for $q\in [1,\infty)$, the $q$-Wasserstein distance
  between two probability measures $\nu$ and $\eta$ on $M$ (equipped
  with the Borel $\sigma$-algebra) is defined as
  \cite[Def.~6.1]{MR2459454}
  \begin{equation*}
    W_q(\nu,\eta)\dfn \inf_{\gamma\in\Gamma} \left(\int_M d(x,y)^q\dd\gamma(x,y)\right)^{1/q},
  \end{equation*}
  where $\Gamma$ stands for the couplings between $\eta$ and $\nu$,
  i.e., the set of probability measures on $M\times M$ with marginals
  $\nu$ and $\eta$, cp.~\cite[Def.~1.1]{MR2459454}.

  To bound the Wasserstein distance, we employ the following
  proposition. It has been similarly stated in \cite[Theorem
  2]{MR4120535}, but for measures on
  $\R^d$. %
  To fit our setting, we extend the result to compact metric
  spaces,\footnote{The author of \cite{MR2459454} mentions that such a
    result is already known, but without providing a reference. For
    completeness we have added the proof.}
  but emphasize that the proof closely follows that of \cite[Theorem
  2]{MR4120535}, and the argument is essentially the same. As pointed
  out in \cite{MR4120535}, the bound in the proposition is sharp.
    
  \begin{proposition}\label{PROP:WASSERSTEIN}
    Let $(M,d)$ be a compact Polish space. Let $T:M\to M$ and
    $\tilde T:M\to M$ be two continuous functions and let $\nu$ be a
    probability measure on $M$ equipped with the Borel
    $\sigma$-algebra.
    Then for every $q\in [1,\infty)$
    \begin{equation}
      W_q(T_\sharp\nu,\tilde T_\sharp\nu)\le \sup_{x \in M}d(T(x),\tilde T(x))<\infty.
    \end{equation}
  \end{proposition}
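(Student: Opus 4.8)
The plan is to exhibit a cheap coupling between $T_\sharp\nu$ and $\tilde T_\sharp\nu$ and estimate its cost directly. First I would consider the map $(T,\tilde T):M\to M\times M$, $x\mapsto (T(x),\tilde T(x))$, which is continuous because $T$ and $\tilde T$ are, and define $\gamma\dfn (T,\tilde T)_\sharp\nu$, a Borel probability measure on $M\times M$. I would then check that $\gamma$ is a coupling of $T_\sharp\nu$ and $\tilde T_\sharp\nu$: for a Borel set $A\subseteq M$, the first marginal evaluated on $A$ is $\gamma(A\times M)=\nu(\{x: T(x)\in A,\ \tilde T(x)\in M\})=\nu(T^{-1}(A))=T_\sharp\nu(A)$, and symmetrically the second marginal is $\tilde T_\sharp\nu$. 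Hence $\gamma\in\Gamma$ in the notation of the definition of $W_q$.

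With this coupling in hand, I would bound
\begin{equation*}
  W_q(T_\sharp\nu,\tilde T_\sharp\nu)^q\le \int_{M\times M} d(x,y)^q\dd\gamma(x,y)
  =\int_M d(T(x),\tilde T(x))^q\dd\nu(x)
  \le \sup_{x\in M} d(T(x),\tilde T(x))^q,
\end{equation*}
using the change-of-variables formula for pushforwards in the middle equality and the fact that $\nu$ is a probability measure in the last step. Taking $q$-th roots gives $W_q(T_\sharp\nu,\tilde T_\sharp\nu)\le \sup_{x\in M} d(T(x),\tilde T(x))$. Finiteness of the right-hand side follows since $M$ is compact and $(x,x')\mapsto d(T(x),\tilde T(x'))$ is continuous on the compact set $M\times M$, hence bounded; in particular $x\mapsto d(T(x),\tilde T(x))$ attains a finite maximum.

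There is essentially no obstacle here: the only points to be slightly careful about are (i) measurability of $(T,\tilde T)$, which is immediate from continuity and the fact that the Borel $\sigma$-algebra on $M\times M$ is generated by products of Borel sets (true since $M$ is separable), and (ii) that compactness is genuinely used only to guarantee $\sup_{x\in M} d(T(x),\tilde T(x))<\infty$ — for the inequality itself boundedness of the integrand, or even just integrability, would suffice. I would remark that this coupling is the same one used in \cite{MR4120535} and that the resulting bound is sharp, as already noted there, by taking $\tilde T$ a constant perturbation of $T$ on a space where $d$ is translation-invariant.
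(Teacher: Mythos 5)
Your proof is correct, and it takes a genuinely different and considerably more direct route than the paper's. The paper constructs the coupling piecewise: it covers $M$ by finitely many $\eps$-balls, partitions $M$ into sets $I_j$, decomposes $T_\sharp\nu=\sum_j\mu_j$ and $\tilde T_\sharp\nu=\sum_j\tilde\mu_j$ with $\mu_j(A)=\nu(T^{-1}(A)\cap I_j)$, couples each pair $(\mu_j,\tilde\mu_j)$ by the normalized product measure, bounds the cost using uniform continuity of $T$ and $\tilde T$ on the compact $M$, and finally sends $\eps\to 0$ to kill the modulus-of-continuity error term. You instead exhibit the single ``diagonal'' coupling $\gamma=(T,\tilde T)_\sharp\nu$, verify its marginals, and compute the cost in one line by the change-of-variables formula. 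This bypasses the partition, the product couplings, the uniform-continuity estimate, and the limit entirely. Your argument also makes transparent a point the paper only remarks in passing, namely that compactness is used solely to make $\sup_x d(T(x),\tilde T(x))$ finite; the inequality itself holds on any Polish space with measurable $T,\tilde T$ for which the integrand is $\nu$-integrable. The paper's more elaborate construction is there because it mirrors the proof strategy of the cited reference (which worked on $\R^d$), but for the statement as formulated your approach is the cleaner one and equally rigorous, including the separability point about the product Borel $\sigma$-algebra.
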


  To apply Prop.~\ref{PROP:WASSERSTEIN} we first have to equip $U$
  with a metric. For a sequence $(c_j)_{j\in\N}\in\ell^1(\N)$ of
  positive numbers set
  \begin{equation}\label{eq:prodmet}
    d(\bsx,\bsy)\dfn \sum_{j\in\N}c_j|x_j-y_j|\qquad\forall\;\bsx,\bsy\in U.
  \end{equation}
  By Lemma \ref{lemma:producttop}, $d$ defines a metric that induces
  the product topology on $U$. Since $U$ with the product topology is
  a compact space by Tychonoff's theorem \cite[Thm.~37.3]{munkres},
  $(U,d)$ is a compact Polish space. Moreover:

  \begin{lemma}\label{LEMMA:TCONT}
    Let $f_\measi$, $f_\measii$ satisfy Assumption \ref{ass:densities}
    and consider the metric \eqref{eq:prodmet} on $U$. Then $T:U\to U$
    and the approximation $\tilde T_\eps:U\to U$ from
    Thm.~\ref{THM:TINF} are continuous with respect to $d$. Moreover,
    if there exists $C>0$ such that with
    \begin{equation}\label{eq:bj}
      b_j\dfn
      \max\{\norm[X]{\psi_{\measi,j}},\norm[Y]{\psi_{\measii,j}}\}
    \end{equation}
    holds $b_j\le Cc_j$ for all $j\in\N$ (cp.~Assumption
    \ref{ass:densities}), then $T$ and $\tilde T_\eps$ are Lipschitz
    continuous.
  \end{lemma}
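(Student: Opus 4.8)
The plan is to prove continuity first and then upgrade to Lipschitz continuity under the extra assumption $b_j \le C c_j$. For continuity of $T$ with respect to the metric $d$ in \eqref{eq:prodmet}: recall that $d$ induces the product topology on $U$ by Lemma \ref{lemma:producttop}, so it suffices to show $T:U\to U$ is continuous with respect to the product topology. This follows from Thm.~\ref{THM:KNOTHEINF}: for each fixed $k$, the component $T_k:\U{k}\to\U{1}$ is built from the conditional distribution functions of $\measi_k$ and $\measii_k$, which by Lemma \ref{LEMMA:FC0}\;(ii) have continuous positive densities; hence each $T_k$ is continuous on $\U{k}$. A map into the product space $U=[-1,1]^\N$ is continuous if and only if each coordinate map $\bsy\mapsto T_k(\bsy_{[k]})$ is continuous, so $T$ is continuous. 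The same argument applies verbatim to $\tilde T_\eps$, whose components \eqref{eq:tTk} are rational functions of finitely many variables with non-vanishing denominators (since $p_k\neq -1$), hence continuous; and by Rmk.~\ref{rmk:kidentity}, all but finitely many components equal the (continuous) identity $\bsy_{[k]}\mapsto y_k$.

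For the Lipschitz statement, I would quantify the modulus of continuity of each component. The key estimate is that, under Assumption \ref{ass:densities}, Prop.~\ref{PROP:COR:DININF}\;(i) (applied via the verification of Assumption \ref{ass:infdens} for densities of the form \eqref{eq:density}, sketched after that proposition) provides a holomorphy domain $\cB_{\bszeta_{[k]}}(\U{1})$ for $R_k=\partial_kT_k$ with $\zeta_j = C_2\delta_j$, where the admissible $\bsdelta$ can be taken with $\delta_j$ comparable to $1/b_j$; in particular $\partial_j T_k$ is bounded, with the bound on $\|\partial_j T_k\|_{L^\infty(\U{k})}$ controlled by $C b_j$ for $j<k$ (the factor $1/\delta_j\lesssim b_j$ from the Cauchy estimate in the $y_j$-direction) and by an absolute constant for $j=k$. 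Then for $\bsx,\bsy\in U$, writing a telescoping sum over coordinates,
\begin{equation*}
  |T_k(\bsx_{[k]}) - T_k(\bsy_{[k]})| \le \sum_{j=1}^k \|\partial_j T_k\|_{L^\infty(\U{k})}\,|x_j-y_j| \le C\sum_{j=1}^k b_j\,|x_j-y_j|,
\end{equation*}
and summing the contribution of $T_k$ into $d(T(\bsx),T(\bsy)) = \sum_k c_k |T_k(\bsx_{[k]})-T_k(\bsy_{[k]})|$ together with $b_j\le C c_j$ and the finiteness $\sum_k c_k<\infty$ (so $\sum_k c_k b_j \lesssim c_j$ after reindexing, using $b_j$ bounded) yields $d(T(\bsx),T(\bsy)) \le L\, d(\bsx,\bsy)$ for a finite constant $L$. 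For $\tilde T_\eps$ the same bound holds since its nonidentity components are finitely many rational functions on a compact set (hence Lipschitz), and the identity components contribute exactly $c_k|x_k-y_k|$.

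The main obstacle I expect is the bookkeeping in the Lipschitz step: one must show the double sum $\sum_k c_k \sum_{j\le k} \|\partial_j T_k\|_\infty |x_j-y_j|$ is bounded by a constant times $\sum_j c_j |x_j-y_j|$, which requires the per-coefficient bounds $\|\partial_j T_k\|_{L^\infty(\U{k})} \lesssim b_j$ to be \emph{uniform in $k$} and summable against $(c_k)_k$ after swapping the order of summation — i.e. $\sum_{k\ge j} c_k b_j \lesssim c_j$. This is where the hypothesis $b_j \le C c_j$ and $(c_j)\in\ell^1$ enter decisively (together with $\sup_j b_j<\infty$, which follows from Assumption \ref{ass:density}(a)); extracting the needed uniform-in-$k$ derivative bounds from Prop.~\ref{PROP:COR:DININF} and the Cauchy estimate \eqref{eq:legbound}-type argument is the technical heart. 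The continuity-only part is comparatively soft and relies purely on the product-topology characterization plus Thm.~\ref{THM:KNOTHEINF} and Rmk.~\ref{rmk:kidentity}.
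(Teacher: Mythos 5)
Your overall strategy matches the paper's: reduce to a per-coordinate telescoping estimate, bound directional Lipschitz constants via the holomorphy domains from Prop.~\ref{PROP:COR:DININF}, and swap the order of summation using $b_j\le Cc_j$ together with $(c_j)\in\ell^1$. Two remarks. First, an organizational point: the paper does not prove continuity separately as you do; it observes that since every $(c_j)$-metric induces the same (product) topology, it suffices to prove the Lipschitz statement in the special case $b_j\le Cc_j$, and continuity for an arbitrary summable $(c_j)$ then comes for free. Your separate product-topology argument for continuity is also valid, just redundant.

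Second, and more substantively: you invoke a Cauchy estimate to get $\norm[L^\infty(\U{k})]{\partial_jT_k}\lesssim b_j$ uniformly in $k$ for $j<k$, but Prop.~\ref{PROP:COR:DININF} makes statements only about $R_k=\partial_kT_k$ (its holomorphy domain and its $L^\infty$ bound), not about $T_k$ itself. To apply a Cauchy estimate directly to $T_k$ you would first need to show $T_k$ extends holomorphically with a uniform $L^\infty$ bound on $\cB_{\bszeta_{[k]}}(\U{1})$, which Prop.~\ref{PROP:COR:DININF} does not assert. The paper circumvents this via Lemma \ref{lemma:TSlip}: it Cauchy-estimates the Lipschitz constant of $\partial_kT_k$ in the $x_j$-direction (yielding $\lesssim b_kb_j$, using both parts (i) and (ii) of Prop.~\ref{PROP:COR:DININF} and the concrete $\bsdelta$ from Lemma \ref{LEMMA:SCRFHOL}), then integrates in $x_k$ to get the Lipschitz constant of $T_k$ itself, with the $x_k$-direction handled separately by the $O(1)$ bound on $\partial_kT_k$. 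Your weaker per-derivative bound $\lesssim b_j$ would indeed suffice for the summation step (since $(c_k)\in\ell^1$), but you need to close the holomorphic-extension-of-$T_k$ gap, either by the contour-integration argument implicit above or by following the route of Lemma \ref{lemma:TSlip}. The treatment of $\tilde T_\eps$ (finitely many smooth rational components plus the identity) is correct and matches the paper.
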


  With $d:U\times U\to\R$ as in Lemma \ref{LEMMA:TCONT}, $(U,d)$ is a
  compact Polish space and $T$ and $\tilde T_\eps$ are continuous, so
  that we can apply Prop.~\ref{PROP:WASSERSTEIN}. Using
  Thm.~\ref{THM:TINF} and $\sup_jc_j\in (0,\infty)$,
  \begin{equation}\label{eq:Wq}
    W_q(T_\sharp\mu,(\tilde T_\eps)_\sharp\mu)\le \sup_{\bsy\in U} d(T(\bsy),\tilde T_\eps(\bsy))\le \sum_{k\in\N}\norm[{L^\infty(\U{k})}]{T_k-\tilde T_{\eps,k}}c_k
    \le CN_\eps^{-\frac{1}{p}+1}.
  \end{equation}
  
  Next let us discuss why $c_j\dfn b_j$ as in \eqref{eq:bj} is a
  natural choice in our setting. Let $\Phi:U\to X$ be the map
  $\Phi(\bsy)=\sum_{j\in\N}y_j\psi_{\measii,j}\in Y$. In the inverse
  problem discussed in Ex.~\ref{ex:bayes}, we try to recover an
  element $\Phi(\bsy)\in Y$. For computational purposes, the problem
  is set up to recover instead the expansion coefficients $\bsy\in
  U$. Now suppose that $\measii$ is the posterior measure on $U$. Then
  $\Phi_\sharp\measii=(\Phi\circ T)_\sharp \measi$ is the
  corresponding posterior measure on $Y$ (the space we are actually
  interested in). The map $\Phi:U\to Y$ is Lipschitz continuous
  w.r.t.\ the metric $d$ on $U$, since for $\bsx$, $\bsy\in U$ due to
  $\norm[Y]{\psi_{\measii,j}}\le b_j$,
  \begin{equation}\label{eq:before}
    \norm[Y]{\Phi(\bsx)-\Phi(\bsy)}
    =\normc[Y]{\sum_{j\in\N}(x_j-y_j)\psi_{\measii,j}}
    \le \sum_{j\in\N}|x_j-y_j|b_j=d(\bsx,\bsy).
  \end{equation}
  Therefore, $\Phi\circ T:U\to Y$ and $\Phi\circ\tilde T_\eps:U\to Y$
  are Lipschitz continuous by Lemma \ref{LEMMA:TCONT}. Moreover,
  compactness of $U$ and continuity of $\Phi:U\to Y$ imply that
  $\Phi(U)\subseteq Y$ is compact.  Hence we may apply
  Prop.~\ref{PROP:WASSERSTEIN} also w.r.t.\ the maps
  $\Phi\circ T:U\to Y$ and $\Phi\circ\tilde T_\eps:U\to Y$.  This
  gives a bound of the pushforward measures on the Banach space $Y$.
  Specifically, since
  $\norm[Y]{\Phi(T(\bsy))-\Phi(\tilde T_\eps(\bsy))}\le
  d(T(\bsy),\tilde T_\eps(\bsy))$, which can be bounded as in
  \eqref{eq:Wq}, we have shown:

  \begin{theorem}\label{thm:wassersteinconv}
    Let $f_\measi$, $f_\measii$ satisfy Assumption \ref{ass:densities}
    for some $p\in (0,1)$, let $\tilde T_\eps:U\to U$ be the
    approximate transport and let $N_\eps\in\N$ be the number of
    degrees of freedom as in Thm.~\ref{THM:TINF}.

    Then there exists $C>0$ such that for every $q\in [1,\infty)$ and
    every $\eps>0$
    \begin{equation*}
      W_q((\tilde T_\eps)_\sharp \mu,\measii)
      \le C N_\eps^{-\frac{1}{p}+1},
    \end{equation*}
    and for the pushforward measures on the Banach space $Y$
    \begin{equation}\label{eq:convpwY}
      W_q((\Phi\circ \tilde T_\eps)_\sharp \mu,\Phi_\sharp \measii)
      \le C N_\eps^{-\frac{1}{p}+1}.
    \end{equation}
  \end{theorem}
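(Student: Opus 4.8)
The plan is to assemble the bound from Proposition~\ref{PROP:WASSERSTEIN}, Lemma~\ref{LEMMA:TCONT} and Theorem~\ref{THM:TINF}; the computation \eqref{eq:Wq} already does most of the work for the first estimate, and the bound on the Banach space then follows by pushing the same supremum estimate through $\Phi$. First I would fix an admissible metric on $U$: take $c_j\dfn\max\{b_j,2^{-j}\}$ with $b_j$ as in \eqref{eq:bj}, so that $c_j>0$, $b_j\le c_j$, $\bar c\dfn\sup_{j\in\N}c_j<\infty$, and $(c_j)_{j\in\N}\in\ell^1(\N)$ --- the last because $\sum_{j\in\N}b_j^p<\infty$ with $p<1$ forces $b_j\to 0$, hence $b_j\le b_j^p$ for all but finitely many $j$ and therefore $(b_j)_{j\in\N}\in\ell^1(\N)$. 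For the metric $d$ of \eqref{eq:prodmet} built from this $(c_j)$, the space $(U,d)$ is a compact Polish space (Lemma~\ref{lemma:producttop} together with Tychonoff's theorem), and since $b_j\le c_j$ (so the hypothesis of Lemma~\ref{LEMMA:TCONT} holds with constant $1$), Lemma~\ref{LEMMA:TCONT} gives that $T$ and $\tilde T_\eps$ are Lipschitz, hence continuous, with respect to $d$.

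With this in place the bound on $U$ is immediate. Applying Proposition~\ref{PROP:WASSERSTEIN} with $M=U$, $\nu=\mu$ and the pair $(T,\tilde T_\eps)$, and using $T_\sharp\mu=\measii$ (the reference being $\measi=\mu$), one obtains for every $q\in[1,\infty)$
\[
  W_q((\tilde T_\eps)_\sharp\mu,\measii)=W_q(T_\sharp\mu,(\tilde T_\eps)_\sharp\mu)\le\sup_{\bsy\in U}d(T(\bsy),\tilde T_\eps(\bsy))\le\bar c\sum_{k\in\N}\norm[{L^{\infty}(\U{k})}]{T_k-\tilde T_{\eps,k}}\le\bar c\,C\,N_\eps^{-\frac{1}{p}+1},
\]
the last step by \eqref{eq:Talgebraica}. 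The resulting constant is independent of $q$ because the dominating quantity $\sup_{\bsy\in U}d(T(\bsy),\tilde T_\eps(\bsy))$ is.

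For the pushforwards on the Banach space I would repeat the argument through $\Phi$. Since $\Phi:U\to Y$, $\Phi(\bsy)=\sum_{j\in\N}y_j\psi_{\measii,j}$, is $1$-Lipschitz from $(U,d)$ to $(Y,\norm[Y]{\cdot})$ by \eqref{eq:before} (using $\norm[Y]{\psi_{\measii,j}}\le b_j\le c_j$), it is continuous, so $\Phi(U)$ is a compact --- hence Polish --- subspace of $Y$, and $\Phi\circ T,\Phi\circ\tilde T_\eps:U\to\Phi(U)$ are continuous. Both $(\Phi\circ T)_\sharp\mu$ and $(\Phi\circ\tilde T_\eps)_\sharp\mu$ are supported on $\Phi(U)$, so every coupling of the two is supported on $\Phi(U)\times\Phi(U)$; consequently their $q$-Wasserstein distance computed in $Y$ agrees with the one computed in the compact space $(\Phi(U),\norm[Y]{\cdot})$, and Proposition~\ref{PROP:WASSERSTEIN} applies on $M=\Phi(U)$. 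Using $\Phi_\sharp(T_\sharp\mu)=\Phi_\sharp\measii$ and the $1$-Lipschitz property of $\Phi$, this yields
\[
  W_q(\Phi_\sharp\measii,(\Phi\circ\tilde T_\eps)_\sharp\mu)\le\sup_{\bsy\in U}\norm[Y]{\Phi(T(\bsy))-\Phi(\tilde T_\eps(\bsy))}\le\sup_{\bsy\in U}d(T(\bsy),\tilde T_\eps(\bsy))\le\bar c\,C\,N_\eps^{-\frac{1}{p}+1},
\]
which is \eqref{eq:convpwY}.

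There is no genuine obstacle here: all the substance is imported --- Theorem~\ref{THM:TINF} carries the rate $N_\eps^{-1/p+1}$, Proposition~\ref{PROP:WASSERSTEIN} carries the comparison of the Wasserstein distance with the uniform distance of the two maps, and Lemmas~\ref{LEMMA:TCONT} and~\ref{lemma:producttop} carry the topology. The only things one must be careful about are (i) selecting a single metric $d$ under which $T$, $\tilde T_\eps$ \emph{and} $\Phi$ are all Lipschitz --- achieved by $c_j=\max\{b_j,2^{-j}\}$ --- and (ii) the observation that restricting the Wasserstein distance from $Y$ to the compact set $\Phi(U)$ leaves its value unchanged, so that Proposition~\ref{PROP:WASSERSTEIN}, stated for compact Polish spaces, can be invoked; the uniformity of $C$ in $q$ then comes for free.
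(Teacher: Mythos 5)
Your proof is correct and follows essentially the same route as the paper: fix a weighted $\ell^1$-metric $d$ on $U$ so that Lemma~\ref{lemma:producttop} and Lemma~\ref{LEMMA:TCONT} apply, invoke Proposition~\ref{PROP:WASSERSTEIN} together with Theorem~\ref{THM:TINF} for the first bound, and then push through $\Phi$ using its $1$-Lipschitz property and compactness of $\Phi(U)$ for the second. The one small refinement over the paper's presentation is your explicit choice $c_j=\max\{b_j,2^{-j}\}$, which guarantees strict positivity of every $c_j$ (the paper's suggested $c_j=b_j$ could degenerate if some $b_j=0$, e.g.\ when $\psi_{\measi,j}=\psi_{\measii,j}=0$), together with your explicit note that $(b_j)\in\ell^p$ with $p<1$ already forces $(b_j)\in\ell^1$. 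Both your argument and the paper's apply Proposition~\ref{PROP:WASSERSTEIN} to the maps $\Phi\circ T,\,\Phi\circ\tilde T_\eps:U\to\Phi(U)$, whose domain and codomain differ, whereas the proposition is stated for maps $M\to M$; this is harmless since the proof of the proposition only partitions the domain and measures distances in the codomain, so it carries over verbatim to maps between two compact Polish spaces, but it is worth noting that you inherit the same slight informality as the paper here rather than resolving it.
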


  Finally let us discuss how to efficiently sample from the measure
  $\Phi_\sharp \measii$ on the Banach space $Y$. As explained in the
  introduction, for a sample $\bsy\sim\measi$ we have
  $T(\bsy)\sim \measii$ and
  $\Phi(T(\bsy))=\sum_{j\in\N}T_j(\bsy_{[j]})\psi_{\measii,j}\sim
  \Phi_\sharp\measii$. To truncate this series, introduce
  $\Phi_s(\bsy_{[s]})\dfn \sum_{j=1}^s y_j\psi_{\measii,j}$.  As
  earlier, denote by $\measi_s$ the marginal measure of $\measi$ on
  $\U{s}$. For $\bsy_{[s]}\sim\measi_s$, the sample
  \begin{equation*}
    \Phi_{s}(\tilde T_{\eps,[s]}(\bsy_{[s]}))
    =\sum_{j=1}^{s}T_{\eps,j}(\bsy_{[j]})\psi_{\measii,j}
  \end{equation*}
  follows the distribution of
  $(\Phi_s\circ \tilde T_{\eps,[s]})_\sharp\measi_s$, where
  $\tilde T_{\eps,[s]}\dfn (\tilde T_{\eps,k})_{k=1}^s:\U{s}\to\U{s}$.
  In the next corollary we bound the Wasserstein distance between
  $(\Phi_s\circ \tilde T_{\eps,[s]})_\sharp\measi_s$ and
  $\Phi_\sharp\measii$.
  Note that the former is a measure on $Y$, and in contrast to the
  latter, is supported on an $s$-dimensional subspace. Thus in general
  neither of these two measures need to be absolutely continuous
  w.r.t.\ the other. This implies that the KL divergence, the total
  variation distance, and the Hellinger distance, in contrast with the
  Wasserstein distance, need not tend to $0$ as $\eps\to 0$ and
  $s\to\infty$.

  The corollary shows that the convergence rate in \eqref{eq:convpwY}
  can be retained by choosing the truncation parameter $s$ as $N_\eps$
  (the number of degrees of freedom in Thm.~\ref{THM:TINF}); in fact,
  it even suffices to truncate after the maximal $k$ such that
  $\Lambda_{k,\eps}\neq\emptyset$, as described in
  Rmk.~\ref{rmk:dNeps}.

\begin{corollary}\label{COR:MEASCONVINF}
  Consider the setting of Thm.~\ref{thm:wassersteinconv} and assume
  that $(b_j)_{j\in\N}$ in \eqref{eq:bj} is monotonically decreasing.
  Then there exists $C>0$ such that for every $q\in [1,\infty)$ and $\eps>0$
  \begin{equation*}
    W_q((\Phi_{N_\eps}\circ \tilde T_{\eps,[N_\eps]})_\sharp \measi_{N_\eps},\Phi_\sharp \measii)
    \le C N_\eps^{-\frac{1}{p}+1}.
  \end{equation*}
\end{corollary}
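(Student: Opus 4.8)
The plan is to reduce the claimed bound to the already-established Wasserstein estimate \eqref{eq:convpwY} plus a truncation error, and to control the latter using the summability of $(b_j)_{j\in\N}$ together with Remark~\ref{rmk:kidentity}. Concretely, write $\Phi_s\circ\tilde T_{\eps,[s]}$ as a map $U\to Y$ (ignoring the coordinates $y_j$ with $j>s$) so that both $(\Phi_s\circ\tilde T_{\eps,[s]})_\sharp\measi_s$ and $(\Phi\circ\tilde T_\eps)_\sharp\mu$ (for $\measi=\mu$) are pushforwards of the \emph{same} measure $\mu$ on $U$ under two continuous maps into the compact set $\Phi(U)\subseteq Y$. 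By the triangle inequality for $W_q$,
\begin{equation*}
  W_q\big((\Phi_{N_\eps}\circ\tilde T_{\eps,[N_\eps]})_\sharp\measi_{N_\eps},\Phi_\sharp\measii\big)
  \le W_q\big((\Phi_{N_\eps}\circ\tilde T_{\eps,[N_\eps]})_\sharp\mu,(\Phi\circ\tilde T_\eps)_\sharp\mu\big)
  + W_q\big((\Phi\circ\tilde T_\eps)_\sharp\mu,\Phi_\sharp\measii\big),
\end{equation*}
and the second term is $\le CN_\eps^{-1/p+1}$ by Thm.~\ref{thm:wassersteinconv}.

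For the first term I would apply Prop.~\ref{PROP:WASSERSTEIN} with $M=\Phi(U)$ (compact Polish, as noted before Thm.~\ref{thm:wassersteinconv}), giving
\begin{equation*}
  W_q\big((\Phi_{N_\eps}\circ\tilde T_{\eps,[N_\eps]})_\sharp\mu,(\Phi\circ\tilde T_\eps)_\sharp\mu\big)
  \le \sup_{\bsy\in U}\normc[Y]{\sum_{j=1}^{N_\eps}\tilde T_{\eps,j}(\bsy_{[j]})\psi_{\measii,j}-\sum_{j\in\N}\tilde T_{\eps,j}(\bsy_{[j]})\psi_{\measii,j}}
  = \sup_{\bsy\in U}\normc[Y]{\sum_{j>N_\eps}\tilde T_{\eps,j}(\bsy_{[j]})\psi_{\measii,j}}.
\end{equation*}
Here is where Rmk.~\ref{rmk:kidentity} enters: there is $k_0\in\N$ with $\Lambda_{\eps,k}=\emptyset$, hence $\tilde T_{\eps,k}(\bsy_{[k]})=y_k$, for all $k\ge k_0$. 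So if $N_\eps\ge k_0-1$ the tail $\sum_{j>N_\eps}\tilde T_{\eps,j}(\bsy_{[j]})\psi_{\measii,j}=\sum_{j>N_\eps}y_j\psi_{\measii,j}$, which has $Y$-norm $\le\sum_{j>N_\eps}\norm[Y]{\psi_{\measii,j}}\le\sum_{j>N_\eps}b_j$; and since $(b_j)$ is decreasing with $\sum b_j^p<\infty$ (Assumption~\ref{ass:density}(a)), standard Stechkin-type estimates give $b_j\le Cj^{-1/p}$ and hence $\sum_{j>N_\eps}b_j\le C N_\eps^{-1/p+1}$, matching the rate. One must also check that indeed $N_\eps\ge k_0-1$ for $\eps$ small; for larger $\eps$ (finitely many cases, or $N_\eps$ bounded) the whole left-hand side is bounded by a constant and the inequality holds trivially after enlarging $C$. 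As noted in Rmk.~\ref{rmk:dNeps} (referenced in the text), one could equally truncate at the largest nonempty level, which only makes the tail smaller.

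The main obstacle is not any single estimate but bookkeeping the two different base spaces cleanly: $\measi_{N_\eps}$ lives on $\U{N_\eps}$ while $\mu$ lives on $U$, and one must argue that $(\Phi_{N_\eps}\circ\tilde T_{\eps,[N_\eps]})_\sharp\measi_{N_\eps}$ equals $(\Phi_{N_\eps}\circ\pi_{N_\eps}\circ\tilde T_{\eps,[N_\eps]}\circ\pi_{N_\eps})_\sharp\mu$ where $\pi_{N_\eps}:U\to\U{N_\eps}$ is the coordinate projection — this is immediate from the definition of $\measi_{N_\eps}$ as the marginal of $\measi=\mu$ and the fact that $\tilde T_{\eps,j}$ depends only on $\bsy_{[j]}$ — so that Prop.~\ref{PROP:WASSERSTEIN} is genuinely applicable with a single reference measure. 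The remaining work (Stechkin bound, handling small-$N_\eps$ regime) is routine, and the monotonicity of $(b_j)$ is used precisely to pass from $\sum_{j>N}b_j^p<\infty$ to the algebraic tail bound $\sum_{j>N}b_j\lesssim N^{-1/p+1}$.
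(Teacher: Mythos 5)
Your proof is correct and follows essentially the same approach as the paper: truncate the $\Phi$-expansion at $N_\eps$, apply Prop.~\ref{PROP:WASSERSTEIN} to maps from $U$ into the compact set $\Phi(U)\subseteq Y$, and control the tail $\sum_{j>N_\eps}b_j$ by Stechkin's lemma using $\ell^p$-summability and monotonicity of $(b_j)_{j\in\N}$. The only structural difference is that you interpolate through the intermediate measure $(\Phi\circ\tilde T_\eps)_\sharp\mu$ via the $W_q$ triangle inequality and invoke Thm.~\ref{thm:wassersteinconv} for one leg, whereas the paper defines $H_j\dfn\tilde T_{\eps,j}$ for $j\le N_\eps$ and $H_j\dfn 0$ otherwise, bounds $\sup_{\bsy\in U}\norm[Y]{\Phi(T(\bsy))-\Phi(H(\bsy))}$ in a single estimate, and applies Prop.~\ref{PROP:WASSERSTEIN} exactly once; the two organizations are equivalent.

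One step you could simplify: to bound $\sup_{\bsy\in U}\normc[Y]{\sum_{j>N_\eps}\tilde T_{\eps,j}(\bsy_{[j]})\psi_{\measii,j}}$ you invoke Rmk.~\ref{rmk:kidentity} and then need $N_\eps\ge k_0-1$, plus a separate argument for small $N_\eps$. This detour is unnecessary: by construction $\tilde T_{\eps,j}:\U{j}\to\U{1}$, so $|\tilde T_{\eps,j}(\bsy_{[j]})|\le 1$ always and the tail is $\le\sum_{j>N_\eps}b_j$ unconditionally, exactly as the paper handles the analogous term $\sum_{j>N_\eps}T_j\psi_{\measii,j}$ using $|T_j|\le 1$. (For the record, the inequality $N_\eps\ge k_0-1$ does hold here: since $\gamma(\bsvarrho,\bsnul)=\varrho_k^{-1}$ maximizes $\gamma(\bsvarrho,\cdot)$ over $\N_0^k$, one has $\Lambda_{\eps,k}\ne\emptyset\Leftrightarrow\varrho_k^{-1}\ge\eps$, and under the decreasing-$(b_j)$ hypothesis $\varrho_j=1+\alpha/b_j$ is nondecreasing, so the nonempty levels form an initial segment $\{1,\dots,k_0-1\}$, each contributing at least one index to $N_\eps$. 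But avoiding the case split is cleaner.)
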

\begin{remark}
  Convergence in $W_q$ implies weak convergence \cite[Theorem
  6.9]{MR2459454}.
\end{remark}

\begin{remark}\label{rmk:dNeps}
  Checking the proof of Thm.~\ref{THM:TINF}, we have
  $N_\eps\le C\eps^{-p}$, cp.~\eqref{eq:nepsest}. Thus the maximal
  activated dimension (represented by the truncation parameter
  $s=N_\eps$) increases only algebraically as $\eps\to 0$.  The
  approximation error also decreases algebraically like $\eps^{1-p}$
  as $\eps\to 0$, cp.~\eqref{eq:sumest}. Moreover,
  the function $\Phi_{s_\eps}\circ \tilde T_{\eps,[s_\eps]}$ with
  $s_\eps\dfn \max\set{k\in\N}{\Lambda_{\eps,k}\neq\emptyset}$
  leads to the same convergence rate in Cor.~\ref{COR:MEASCONVINF}. In
  other words, we only need to use the components $\tilde T_{\eps,k}$
  for which $\Lambda_{\eps,k}\neq \emptyset$.
\end{remark}

}

  \section{Conclusions}\label{sec:conclusions}
  {The use of transportation methods to sample from
    high-dimensional distributions is becoming increasingly popular to
    solve inference problems and perform other machine learning tasks. Therefore,
    questions of when and how these methods can be successful are of
    great importance, but thus far not well understood. In the present
    paper we analyze the approximation of the KR transport in the
    high- (or infinite-) dimensional regime and on the bounded domain
    $[-1,1]^\N$.  Under the setting presented in Sec.~\ref{sec:main},
    it is shown that the transport can be approximated without
    suffering from the curse of dimension. Our approximation is based
    on polynomial and rational functions, and we provide an explicit
    \textit{a priori} construction of the ansatz space. Moreover, we show
    how these results imply that it is possible to efficiently sample
    from certain high dimensional distributions by transforming a
    lower dimensional latent variable.  

    As we have discussed in the finite dimensional case \cite[Sec.~5]{zm1}, from an approximation
    viewpoint there is also a link to neural networks, which can be
    established via \cite{yarotsky,pmlr-v70-telgarsky17a} where it is
    proven that ReLU neural networks are efficient at emulating
    polynomials and rational functions. While we have not developed
    this aspect further in the present manuscript, we mention that
    neural networks are used in the form of normalizing flows
    \cite{pmlr-v37-rezende15,papamakarios2019normalizing} to couple distributions in spaces of
    equal dimension, and for example in the form of generative
    adversarial networks \cite{NIPS2014_5ca3e9b1,pmlr-v70-arjovsky17a}
    and, more recently, injective flows \cite{2002.08927,2102.10461},
    to map lower-dimensional latent variables to samples from a %
    high-dimensional distribution. In Sec.~\ref{sec:measinfty} we
    provided some insight 
    (for the present setting, motivated by inverse problems in science and engineering)
    into how low-dimensional the latent variable can be, and how expressive the transport should be,
    to achieve a certain accuracy in the Wasserstein distance (see Cor.~\ref{COR:MEASCONVINF}).  
    Further examining this connection and generalizing our results to distributions on
    unbounded domains (such as $\R^\N$ instead of $[-1,1]^\N$) will be
    the topic of future research.}

\appendix

\section{Proofs of Sec.~\ref{SEC:Tinf}}
\subsection{%
  Lemma \ref{LEMMA:FC0}}\label{app:lemma:fc0}

\begin{lemma}\label{lemma:producttop}
  Let $(c_j)_{j\in\N}\in\ell^1(\N)$ be a sequence of positive numbers.
  Then $d(\bsx,\bsy)\dfn \sum_{j\in\N}c_j|x_j-y_j|$ defines a metric
  on $U$ that induces the product topology.
\end{lemma}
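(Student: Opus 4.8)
The plan is to first check that $d$ is a well-defined metric, and then to establish that the metric topology and the product topology on $U$ coincide by proving the two inclusions separately.

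First, since $\bsx,\bsy\in U=[-1,1]^\N$ we have $|x_j-y_j|\le 2$ for all $j$, so $d(\bsx,\bsy)=\sum_{j\in\N}c_j|x_j-y_j|\le 2\sum_{j\in\N}c_j<\infty$ because $(c_j)_{j\in\N}\in\ell^1(\N)$; thus $d$ is a well-defined finite-valued function on $U\times U$. Nonnegativity and symmetry are immediate. If $d(\bsx,\bsy)=0$ then $c_j|x_j-y_j|=0$ for every $j$, and since each $c_j>0$ this forces $x_j=y_j$ for all $j$, i.e.\ $\bsx=\bsy$; conversely $d(\bsx,\bsx)=0$. The triangle inequality follows by summing the termwise estimates $c_j|x_j-z_j|\le c_j|x_j-y_j|+c_j|y_j-z_j|$ over $j\in\N$. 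Hence $d$ is a metric on $U$.

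Next I would show that every $d$-open ball is open in the product topology by fixing $\bsx\in U$ and arguing that $\bsy\mapsto d(\bsx,\bsy)$ is continuous on $U$ with respect to the product topology. For $n\in\N$ the partial sum $g_n(\bsy)\dfn\sum_{j=1}^n c_j|x_j-y_j|$ depends only on the coordinates $y_1,\dots,y_n$ and is continuous in each of them, hence continuous on $U$ in the product topology; moreover $|d(\bsx,\bsy)-g_n(\bsy)|=\sum_{j>n}c_j|x_j-y_j|\le 2\sum_{j>n}c_j\to 0$ as $n\to\infty$, uniformly in $\bsy$. Thus $d(\bsx,\cdot)$ is a uniform limit of product-continuous functions, hence product-continuous, so $B_d(\bsx,r)=\{\bsy\in U:d(\bsx,\bsy)<r\}$ is product-open; consequently the metric topology is contained in the product topology.

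For the reverse inclusion I would show that every subbasic product-open set is $d$-open: such a set is $\pi_k^{-1}(V)$ for some $k\in\N$ and open $V\subseteq[-1,1]$, where $\pi_k$ is the $k$th coordinate projection. Given $\bsx\in\pi_k^{-1}(V)$, choose $\eps>0$ with $(x_k-\eps,x_k+\eps)\cap[-1,1]\subseteq V$. If $d(\bsx,\bsy)<c_k\eps$ then $c_k|x_k-y_k|\le d(\bsx,\bsy)<c_k\eps$, hence $|x_k-y_k|<\eps$ and $y_k\in V$; that is, $B_d(\bsx,c_k\eps)\subseteq\pi_k^{-1}(V)$. Since the sets $\pi_k^{-1}(V)$ generate the product topology, it follows that the product topology is contained in the metric topology, and the two coincide. (Alternatively, once the first inclusion is established one may invoke compactness: by Tychonoff's theorem $(U,\text{product})$ is compact, $(U,d)$ is Hausdorff, and the identity $(U,\text{product})\to(U,d)$ is a continuous bijection, hence a homeomorphism.) There is no serious obstacle in this proof; the only point that needs a little care is the uniform convergence of the partial sums $g_n$, which is precisely what allows continuity to be passed through the infinite sum in the first inclusion.
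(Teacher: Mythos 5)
Your proof is correct. The paper verifies the two topologies agree by directly comparing basis elements: for one inclusion it picks $N_\eps$ with small tail $\sum_{j>N_\eps}2c_j$ and shows a small box is inside the $d$-ball, and for the other it uses $C_0=\min_{j\le N}c_j$ to show a small $d$-ball is inside the box. You reach the same conclusion, but package the first inclusion as uniform convergence of the partial sums $g_n\to d(\bsx,\cdot)$, yielding product-continuity of $d(\bsx,\cdot)$; this is the same estimate dressed in cleaner language. Your second inclusion (subbasic sets $\pi_k^{-1}(V)$ are $d$-open via the ball of radius $c_k\eps$) is essentially the paper's argument specialized to a single coordinate, which suffices since subbasic sets generate the topology. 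You also add two things the paper omits: an explicit verification that $d$ is a metric (the paper takes this for granted), and the Tychonoff shortcut --- once metric-open $\subseteq$ product-open is known, a continuous bijection from a compact space to a Hausdorff space is a homeomorphism --- which is a genuinely different and shorter route for the reverse inclusion, at the cost of invoking compactness of $U$ rather than arguing elementarily.
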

\begin{proof}
  Recall that the family of sets
  \begin{equation*}
    \set{\bsx\in U}{|x_j-y_j|<\eps~\forall j\le N}\qquad\bsy\in U,~\eps>0,~N\in\N,
  \end{equation*}
  forms a basis of the product topology on $U$. Fix $\bsy\in U$ and
  $\eps>0$, and let $N_\eps\in\N$ be so large that
  $\sum_{j>N_\eps}2c_j<\frac{\eps}{2}$. Let
  $C_0\dfn \sum_{j=1}^{N_\eps}c_j$. Then if $\bsx$, $\bsy\in U$
  satisfy $|x_j-y_j|<\frac{\eps}{2C_0}$ for all $j\le N_\eps$, we have
  $d(\bsx,\bsy)=\sum_{j\in\N}c_j|x_j-y_j|<\frac{\eps}{2}\frac{\sum_{j=1}^{N_\eps}c_j}{C_0}+\sum_{j>N_\eps}2c_j\le\eps$,
  and thus
  \begin{equation*}
    \setc{\bsx\in U}{|x_j-y_j|<\frac{\eps}{2 C_0}~\forall j\le N_\eps}\subseteq\setc{\bsx\in U}{\sum_{j\in\N}c_j|x_j-y_j|<\eps}
    =\set{\bsx\in U}{d(\bsx,\bsy)<\eps}.
  \end{equation*}
  On the other hand, if we fix $\bsy\in U$, $\eps>0$ and $N\in\N$,
  and set $C_0\dfn \min_{j=1,\dots,N} c_j>0$, then
  \begin{equation*}
    \setc{\bsx\in U}{d(\bsx,\bsy)<\eps C_0}
    =\setc{\bsx\in U}{\sum_{j\in\N}c_j|x_j-y_j|<\eps C_0}
    \subseteq
    \set{\bsx\in U}{|x_j-y_j|<\eps~\forall j\le N}.\qedhere
  \end{equation*}
\end{proof}

\begin{proof}[Proof of Lemma \ref{LEMMA:FC0}]
  {By \cite[Lemma 6.4.2 (ii)]{bogachev}, the Borel $\sigma$-algebra
    on $U$ (with the product topology) coincides with the product
    $\sigma$-algebra on $U$. Since $f:U\to\R$ is continuous, and
    because $U$ and $\R$ are equipped with the Borel
    $\sigma$-algebras, $f:U\to\R$ is measurable. Since $f$ is bounded
    it belongs to $L^2(U,\mu)$.
    
    Fix $(c_j)_{j\in\N}\in\ell^1(\N)$ with $c_j>0$ for all $j\in\N$,
    and let $d$ be the metric on $U$ from Lemma
    \ref{lemma:producttop}.}  Since $f\in C^0(U;\R_+)$ and $U$ is
  compact by Tychonoff's theorem \cite[Thm.~37.3]{munkres}, the
  Heine-Cantor theorem yields $f$ to be uniformly continuous. Thus for
  any $\eps>0$ there exists $\delta_\eps>0$ such that for all $\bsx$,
  $\bsy\in U$ with $d(\bsx,\bsy)<\delta_\eps$ it holds
  $|f(\bsx)-f(\bsy)|<\eps$. Now let $k\in\N$ and $\eps>0$ arbitrary.
  Then for all $\bsx_{[k]}$, $\bsy_{[k]}\in \U{k}$ such that
  $\sum_{j=1}^k c_j|x_j-y_j|<\delta_\eps$, we get
  \begin{equation*}
    |\hat f_k({\bsx_{[k]}})-\hat f_k({\bsy_{[k]}})|
    =\left|\int_U f(\bsx_{[k]},\bst)-f(\bsy_{[k]},\bst)\dd\mu(\bst)\right|
    \le \int_U |f(\bsx_{[k]},\bst)-f(\bsy_{[k]},\bst)|\dd\mu(\bst)
    \le \eps,
  \end{equation*}
  which shows continuity of $\hat f_k:\U{k}\to\R$.

  Next, using that $\inf_{\bsy\in U}f(\bsy)\dfnn r>0$ (due to
  compactness of $U$ and continuity of $f$), for $k>1$ we have
  $\hat f_{k-1}({\bsx_{[k-1]}})\ge \min\{r,1\}>0$ independent of
  $\bsx_{[k-1]}\in \U{{k-1}}$. This implies that also
  $\frac{\hat f_k}{\hat f_{k-1}}=f_k:\U{k}\to\R_+$ is continuous,
  {where the case $k=1$ is trivial since $\hat f_0\equiv 1$.}

  Finally we show \eqref{eq:unifconv}. Let again $\eps>0$ be arbitrary
  and $N_\eps\in\N$ so large that
  $\sum_{j>N_\eps}{2c_j}<\delta_\eps$.  Then for every $\bsx$,
  $\bst\in U$ and every $k>N_\eps$ we have
  $d((\bsx_{[k]},\bst),\bsx)\le
  \sum_{j>N_\eps}c_j|x_j-t_j|\le
  \sum_{j>N_\eps}2c_j<\delta_\eps$, which implies
  $|f(\bsx_{[k]},\bst)-f(\bsx)|<\eps$. Thus for every $\bsx\in U$ and
  every $k>N_\eps$
  \begin{equation*}
    |\hat f_k(\bsx_{[k]})-f(\bsx)|
    =\left|\int_U f(\bsx_{[k]},\bst)\dd\mu( \bst)-f(\bsx)\right|
    \le \int_U |f(\bsx_{[k]},\bst)-f(\bsx)|\dd\mu( \bst)
    < \eps,
  \end{equation*}
  which concludes the proof.
\end{proof}

\subsection{Thm.~\ref{THM:KNOTHEINF}}\label{app:thm:knotheinf}
With
$F_{*;k}(\bsx_{[k-1]},x_k)\dfn \int_{-1}^{x_k}
f_{*;k}(\bsx_{[k-1]},t_k)\dd t_k$, the construction of $T_k:\U{k}\to\U{k}$
described in Sec.~\ref{SEC:Tinf} amounts to the explicit formula
$T_1(x_1)\dfn (F_{\measii;1})^{-1}\circ F_{\measi;1}(x_1)$ and
inductively
\begin{equation}\label{eq:Texpl}
      T_k(\bsx_{[k-1]},\cdot)\dfn F_{\measii;k}(T_{[k-1]}(\bsx_{[k-1]}),\cdot)^{-1}\circ F_{\measi;k}(\bsx_{[k-1]},\cdot),
    \end{equation}
    where $F_{\measii;k}(T_{[k-1]}(\bsx_{[k-1]}),\cdot)$ denotes the
    inverse of
    $x_k\mapsto F_{\measii;k}(T_{[k-1]}(\bsx_{[k-1]}),x_k)$.
    \begin{remark}\label{rmk:Tkcont}
      If $f_{*;k}\in C^0(\U{k};\R_+)$ for $*\in\{\measi,\measii\}$,
      then by \eqref{eq:Texpl} it holds $T_k$,
      $\partial_kT_k\in C^0(\U{k})$.
    \end{remark}

\begin{proof}[Proof of Thm.~\ref{THM:KNOTHEINF}]
  We start with \ref{item:KT}.  As a consequence of
  Rmk.~\ref{rmk:Tkcont} and Lemma \ref{LEMMA:FC0},
  $T_k\in C^0(\U{k};\U{1})$ for every $k\in\N$. So each
  $T_k:\U{k}\to \U{1}$ is measurable and thus also
  ${T_{[n]}}=(T_k)_{k=1}^n:\U{n}\to \U{n}$ is measurable for
  each $n\in\N$. Furthermore $T:U\to U$ is bijective by Lemma
  \ref{lemma:bijective} and because for every $\bsx\in U$ and $k\in\N$
  it holds that $T_k(\bsx_{[k-1]},\cdot):\U{1}\to \U{1}$ is
  bijective.

  The product $\sigma$-algebra on $U$ is generated by the algebra (see
  \cite[Def.~1.2.1]{bogachev}) $\cA_0$ given as the union of the
  $\sigma$-algebras
  $\cA_n\dfn \set{A_n\times [-1,1]^\N}{A_n\in\cB(\U{n})}$,
  $n\in\N$, where $\cB(\U{n})$ denotes the Borel
  $\sigma$-algebra. For sets of the type
  $A\dfn A_n \times [-1,1]^\N\in\cA_n$ with $A_n\in\cB(\U{n})$,
  due to $T_j(\bsy_{[j]})\in \U{1}$ for all $\bsy\in U$ and $j>n$,
  we have
  \begin{equation*}
    T^{-1}(A)=\set{\bsy\in U}{T(\bsy)\in A}=\set{\bsy\in U}{%
      T_{[n]}(\bsy_{[n]})\in A_n
    }=(T_{[n]})^{-1}(A_n)\times [-1,1]^\N,
  \end{equation*}
  which belongs to $\cA_n$ and thus to the product $\sigma$-algebra on
  $U$ since $T_{[n]}$ is measurable. Hence $T:U\to U$ is measurable
  w.r.t.\ the product $\sigma$-algebra.

  Denote now by $\measii_n$ and $\measi_n$ the marginals on $\U{n}$
  w.r.t.\ the first $n$ variables, i.e., e.g.,
  $\measii_n(A)\dfn\measii(A\times [-1,1]^\N)$ for every
  $A\in\cB(\U{n})$. %
  By \eqref{eq:Tfinite} (see \cite[Prop.~2.18]{santambrogio}),
  $(T_{[n]})_\sharp\measi_n=\measii_n$. For sets of the type
  $A\dfn A_n \times [-1,1]^\N\in\cA_n$ with $A_n\in\cB(\U{n})$,
  \begin{align*}
    T_\sharp\measi(A)=\measi(\set{\bsy\in U}{T(\bsy)\in A})
    &=\measi(\set{\bsy\in U}{{T_{[n]}(\bsy_{[n]})}\in A_n})\nonumber\\
    &=\measi_n(\set{\bsy\in \U{n}}{T_{[n]}(\bsy)\in A_n})\nonumber\\
    &=\measii_n(A_n)\nonumber\\
    &=\measii(A).
  \end{align*}
  According to \cite[Thm.~3.5.1]{bogachev},
  the extension of a non-negative $\sigma$-additive set function on the
  algebra $\cA_0$ to the $\sigma$-algebra generated by $\cA_0$ is
  unique. Since $T:U\to U$ is bijective and measurable, it holds that
  both $\measii$ and $T_\sharp \measi$ are measures on $U$ and
  therefore $\measii=T_\sharp\measi$.

  Finally we show \ref{item:rdder}. Let
  $\hat f_{\measii,n}\in C^0(\U{n};\R_+)$ and
  $\hat f_{\measi,n}\in C^0(\U{n};\R_+)$ be as in \eqref{eq:fk2},
  i.e., these functions denote the densities of $\measii_n$, $\measi_n$.
  Since $(T_{[n]})_\sharp \measi_n=\measii_n$, %
  by a change of variables (see, e.g., \cite[Prop.~2.5]{bogachevtri}),
   for all $\bsx\in U$
  \begin{equation*}
    \hat
    f_{\measi,n}(\bsx_{[n]})=\hat f_{\measii,n}(T_{[n]}(\bsx_{[n]}))\det dT_{[n]}(\bsx_{[n]})=
    \hat f_{\measii,n}(T_{[n]}(\bsx_{[n]}))\prod_{j=1}^n\partial_jT_j(\bsx_{[j]}).
  \end{equation*}
  Therefore
  \begin{equation}\label{eq:detfrac}
    \prod_{j=1}^n\partial_jT_j(\bsx_{[j]})=\frac{\hat f_{\measi,n}(\bsx_{[n]})}{\hat f_{\measii,n}(T_{[n]}(\bsx_{[n]}))}.
  \end{equation}
  According to Lemma \ref{LEMMA:FC0} we have uniform convergence
  \begin{equation*}
    \lim_{n\to\infty}\hat f_{\measi,n}(\bsx_{[n]})=f_\measi(\bsx)\qquad\forall \bsx\in U
  \end{equation*}
  and uniform convergence of
  \begin{equation*}
    \lim_{n\to\infty}\hat f_{\measii,n}(\bsy_{[n]})= f_\measii(\bsy)\qquad\forall \bsy\in U.
  \end{equation*}
  The latter implies with $\bsy=T(\bsx)$ that
  \begin{equation*}
    \lim_{n\to\infty}\hat f_{\measii,n}(T_{[n]}(\bsx_{[n]}))=f_\measii(T(\bsx))\qquad\forall \bsx\in U
  \end{equation*}
  converges uniformly.  Since $f_\measii:U\to\R_+$ is continuous and
  $U$ is compact, we can conclude that $\hat f_{\measii,n}(\bsx)\ge r$
  (cp.~\eqref{eq:fk2}) for some $r>0$ independent of $n\in\N$ and
  $\bsx\in \U{n}$. Thus the right-hand side of \eqref{eq:detfrac}
  converges uniformly, and
  \begin{equation*}
    \det dT(\bsx)\dfn \lim_{n\to\infty} \prod_{j=1}^n\partial_jT_j(\bsx_{[j]})
    = {\frac{f_\measi(\bsx)}{f_\measii(T(\bsx))}} \in C^0(U;\R_+)
  \end{equation*}
  converges uniformly. Moreover
  $\det dT(\bsx)f_\measii(T(\bsx))=f_\measi(\bsx)$ for all $\bsx\in U$.
\end{proof}

\section{Proofs of Sec.~\ref{sec:infanalyticity}}
\subsection{Prop.~\ref{PROP:COR:DININF}}
\label{app:thm:cor:dinninf}

{The proposition is a consequence of the finite dimensional result
shown in \cite{zm1}. For better readability, we recall the statement
here together with its requirements; see \cite[Assumption 3.5, Thm.~3.6]{zm1}:

\begin{assumption}\label{ass:finite}
  Let $0<\hat M<\hat L$, ${\hat C_1}>0$,
  $k\in\N$ and
  $\bsdelta\in (0,\infty)^k$ be given. 
  For $*\in\{\measi,\measii\}$:
  \begin{enumerate}[label=(\alph*)]
  \item\label{item:cordinN:1}
    $\hat f_*:\U{k}\to\R_+$ is a probability density
    and
    $\hat f_{*}\in C^1(\cB_{\bsdelta}(\U{1});\C)$,
  \item\label{item:cordinN:2} $\hat M\le |\hat f_{*}(\bsx)|\le \hat L$ for
    $\bsx\in \cB_{\bsdelta}(\U{1})$,
  \item\label{item:cordinN:3}
    $\sup_{\bsy\in 
      \cB_{\bsdelta}}|\hat f_{*}(\bsx+\bsy)-\hat f_{*}(\bsx)| \le {\hat C_1}$
    for $\bsx\in\U{k}$,
  \item\label{item:cordinN:4}
    $      \sup_{\bsy\in
        \cB_{\bsdelta_{[k]}}\times \{0\}^{k-j}}|\hat f_{*}(\bsx+\bsy)-\hat f_{*}(\bsx)|\le {\hat C_1} \delta_{k+1}
$
    for $\bsx\in\U{k}$ and $j\in\{1,\dots,k-1\}$.
  \end{enumerate}
\end{assumption}

\begin{theorem}\label{THM:COR:DINN}
  Let $0<\hat M\le \hat L<\infty$, $k\in\N$ and $\bsdelta\in
  (0,\infty)^k$. There exist ${\hat C_1}$, ${\hat C_2}$ and ${\hat C_3}>0$ depending on $\hat M$
  and $\hat L$ (but not on $k$ or $\bsdelta$) such that if Assumption
  \ref{ass:finite} holds with ${\hat C_1}$, then:

  Let $H:\U{k}\to \U{k}$ be the KR-transport as in
  \eqref{eq:T} such that $H$ pushes forward the measure with density
  $\hat f_\measi$ to the one with density $\hat f_\measii$.  Set
  $R_k\dfn \partial_{k}H_k$.  With $\bszeta=(\zeta_j)_{j=1}^k$ where
  $\zeta_{j}\dfn {\hat C_2} \delta_{j}$,
  it holds for all $j\in\{1,\dots,k\}$:
  \begin{enumerate}
  \item\label{item:cordinN:a}
    $R_j\in C^1(\cB_{\bszeta_{[j]}}(\U{1});\cB_{{\hat C_3}}(1))$ and
    $\Re(R_j(\bsx))\ge \frac{1}{{\hat C_3}}$ for all
    $\bsx\in \cB_{\bszeta_{[j]}}(\U{1})$,
  \item\label{item:cordinN:b} if $j\ge 2$,
    $R_j:\cB_{\bszeta_{[j-1]}}(\U{1})\times \U{1}\to
    \cB_{\frac{{\hat C_3}}{\max\{1,\delta_j\}}}(1)$.
  \end{enumerate}
\end{theorem}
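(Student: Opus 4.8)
\textbf{Proof plan for Theorem \ref{THM:COR:DINN}.}

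The plan is to track the one-dimensional conditional maps through the explicit Knothe--Rosenblatt formula \eqref{eq:Texpl} and propagate holomorphy and quantitative bounds inductively in the component index $j$. First I would fix the constants: choose $\hat C_1$ small enough (depending only on $\hat M,\hat L$) that the perturbation arguments below close, and then $\hat C_2\in(0,1]$, $\hat C_3>0$ will be determined along the way. The key preliminary observation is that Assumption \ref{ass:finite}\ref{item:cordinN:2} together with \ref{item:cordinN:1} gives, for each $j$, that the marginal density $\hat f_{*,j}(\bsx_{[j]})=\int_{\U{k-j}}\hat f_*(\bsx_{[j]},\bst)\dd\mu(\bst)$ (obtained by integrating out the last $k-j$ coordinates) extends holomorphically to $\cB_{\bsdelta_{[j]}}(\U{1})$, stays within a factor depending on $\hat M,\hat L$ of a positive constant there, and---crucially via \ref{item:cordinN:3}, \ref{item:cordinN:4}---has small oscillation: $|\hat f_{*,j}(\bsx+\bsy)-\hat f_{*,j}(\bsx)|\lesssim \hat C_1\delta_{j+1}$ for $\bsy$ supported in the first $j$ (resp.\ $j-1$) complex directions. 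Hence the conditional density $f_{*,j}=\hat f_{*,j}/\hat f_{*,j-1}$ inherits: holomorphy on $\cB_{\bsdelta_{[j]}}(\U{1})$, two-sided bounds $c(\hat M,\hat L)\le |f_{*,j}|\le C(\hat M,\hat L)$, and oscillation control of size $O(\hat C_1)$ in the first $j-1$ directions but $O(\hat C_1\delta_j)$ in the last (the $j$th) direction.

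Next I would handle $T_1$ and then the inductive step. For $T_1=(F_{\measii;1})^{-1}\circ F_{\measi;1}$, with $F_{*;1}(x_1)=\int_{-1}^{x_1}f_{*,1}(t)\dd t$: holomorphy of $f_{*,1}$ on $\cB_{\delta_1}(\U{1})$ gives holomorphy of $F_{*;1}$, positivity of the real part of $f_{*,1}$ (from the two-sided bound plus small oscillation, provided $\hat C_1$ is small) makes $F_{\measii;1}$ invertible on a slightly shrunken disk $\cB_{\zeta_1}(\U{1})$, $\zeta_1=\hat C_2\delta_1$, and the chain rule gives $R_1=\partial_1 T_1 = (f_{\measi,1}/f_{\measii,1})\circ(\text{inverse map})$, whence $R_1$ maps $\cB_{\zeta_1}(\U{1})$ into $\cB_{\hat C_3}(1)$ with $\Re R_1\ge 1/\hat C_3$ for suitable $\hat C_3$; item \ref{item:cordinN:b} is vacuous at $j=1$. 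For the inductive step $j\to j$, assume $T_{[j-1]}:\cB_{\bszeta_{[j-1]}}(\U{1})\to\cB_{\bszeta_{[j-1]}}(\U{1})$ is holomorphic with the claimed properties (in particular, it maps into a domain on which the $f_{*,j}$ are controlled). Then $F_{\measi;j}(\bsx_{[j-1]},x_j)=\int_{-1}^{x_j}f_{\measi,j}(\bsx_{[j-1]},t)\dd t$ is holomorphic on $\cB_{\bszeta_{[j-1]}}(\U{1})\times\cB_{\delta_j}(\U{1})$; the inner map $T_{[j-1]}$ composed into $F_{\measii;j}$ yields a holomorphic function of $(\bsx_{[j-1]},x_j)$, still invertible in the last variable because $\Re f_{\measii,j}>0$ there; applying the analytic implicit function / Lagrange inversion bounds (this is where the finite-dimensional machinery from \cite{zm1} is packaged) gives that $T_j(\bsx_{[j-1]},\cdot)=F_{\measii;j}(T_{[j-1]}(\bsx_{[j-1]}),\cdot)^{-1}\circ F_{\measi;j}(\bsx_{[j-1]},\cdot)$ is holomorphic on $\cB_{\bszeta_{[j-1]}}(\U{1})\times\cB_{\zeta_j}(\U{1})$ provided $\hat C_2$ is chosen small. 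Differentiating, $R_j=\partial_j T_j$ equals $f_{\measi,j}/\big(f_{\measii,j}\circ(T_{[j-1]},T_j)\big)$ up to the inversion Jacobian, giving item \ref{item:cordinN:a}.

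The main work---and the main obstacle---is item \ref{item:cordinN:b}: the sharpened estimate $R_j(\bsx_{[j-1]},x_j)\in\cB_{\hat C_3/\max\{1,\delta_j\}}(1)$ when the last variable $x_j$ is kept \emph{real}. The point is that when $x_j\in\U{1}$ and only $\bsx_{[j-1]}$ is complex, the conditional density $f_{*,j}$ oscillates by only $O(\hat C_1\delta_j)$ rather than $O(\hat C_1)$ (this is exactly what Assumption \ref{ass:finite}\ref{item:cordinN:4} buys us, after integrating out the tail). One shows that $F_{\measi;j}$ and $F_{\measii;j}\circ T_{[j-1]}$ then differ from the identity-normalized antiderivative of a constant by $O(\hat C_1\delta_j)$, so their composition $T_j$ is $O(\delta_j)$-close to the identity in $x_j$, and hence $R_j=\partial_j T_j$ is within $O(\hat C_1)\cdot(1/\max\{1,\delta_j\})$ of $1$ after rescaling---the correct statement being that the \emph{deviation} of $R_j$ from $1$ is bounded by $\hat C_3/\max\{1,\delta_j\}$. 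Making the bookkeeping of how the $\delta_j$-small oscillation survives the inversion and the composition with $T_{[j-1]}$ precise, uniformly in $k$ and $\bsdelta$, is the delicate part; I would extract it from the corresponding step in \cite{zm1} and verify that no constant there secretly depends on $k$. The remaining assertions (that the compositions stay inside the stated polydisks so that the induction can proceed, and that all constants depend only on $\hat M,\hat L$) follow by choosing $\hat C_1$ small and $\hat C_2$ small at the outset and never changing them.
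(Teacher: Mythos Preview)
The paper does not prove Theorem~\ref{THM:COR:DINN} at all: it is quoted verbatim from the companion paper \cite[Thm.~3.6]{zm1} and used as a black box in the proof of Prop.~\ref{PROP:COR:DININF}. So there is no proof in this paper to compare your proposal against; what you have written is an attempted reconstruction of the argument in \cite{zm1}.

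As a sketch of that argument, the inductive skeleton you describe (pass to marginals, form conditionals as ratios, build $T_j$ via \eqref{eq:Texpl}, invert $F_{\measii;j}$ by an analytic implicit-function argument on a shrunken polydisk, carry $T_{[j-1]}$ forward) is on the right track, and item~\ref{item:cordinN:a} would indeed fall out of the formula $R_j=f_{\measi,j}/\big(f_{\measii,j}\circ T_{[j]}\big)$ together with the two-sided bounds on the conditional densities. You have also correctly identified item~\ref{item:cordinN:b} as the delicate part.

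However, your explanation of item~\ref{item:cordinN:b} contains a genuine directional error. You write that the conditional density oscillates by $O(\hat C_1\delta_j)$ when the first $j-1$ variables are complex, and that ``$T_j$ is $O(\delta_j)$-close to the identity in $x_j$''. This cannot yield $|R_j-1|\le\hat C_3/\max\{1,\delta_j\}$; the content of \ref{item:cordinN:b} is nontrivial precisely when $\delta_j$ is \emph{large}, and then you need closeness of order $1/\delta_j$, not $\delta_j$. The actual mechanism is a Cauchy estimate in the $j$th variable: since the marginal density is bounded and holomorphic on a disk of radius $\delta_j$ in $y_j$, its variation over $\U{1}$ in that variable is $O(1/\delta_j)$, so the $j$th conditional density is nearly constant in $y_j$ and $T_j(\bsx_{[j-1]},\cdot)$ is $O(1/\delta_j)$-close to the identity. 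You have also swapped the roles of the directions in your oscillation bookkeeping: Assumption~\ref{ass:finite}(d) bounds the oscillation of $\hat f_*$ under perturbation of the \emph{first} $j$ complex coordinates by $\hat C_1\delta_{j+1}$ (not the last one), and its role is to keep the image $T_{[j-1]}(\cB_{\bszeta_{[j-1]}}(\U{1}))$ inside a domain where the inversion of $F_{\measii;j}$ is controlled, so that the induction closes with $k$-independent constants.
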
}

\begin{proof}[Proof of Prop.~\ref{PROP:COR:DININF}]
  For $*\in\{\measi,\measii\}$ 
  and $\bsz\in\cB_\bsdelta(\U{1})\subseteq\C^k$
  let
  \begin{equation*}
    \hat f_{*,k}(\bsz)\dfn \int_U f_*(\bsz,\bsy)\dd\mu(\bsy)
  \end{equation*}
    be the extension of
  \eqref{eq:fk2} to complex numbers.  By Lemma \ref{LEMMA:FC0},
  $\hat f_{*,k}\in C^0(\U{k})$. Moreover with $\measi_k$ and
  $\measii_k$ being the marginal measures on $\U{k}$ in the first
  $k$ variables, by definition
  $\hat f_{\measi,k}=\frac{\ddd\measi_k}{\dd\mu}$ and
  $\hat f_{\measii,k}=\frac{\ddd\measii_k}{\dd\mu}$. In other words, these
  functions are the respective marginal densities in the first $k$
  variables.

  Let $H:\U{k}\to \U{k}$ be the KR-transport satisfying
  $H_\sharp\measi_k=\measii_k$, and let $T:U\to U$ be the KR-transport
  satisfying $T_\sharp\measi=\measii$.  By construction
  (cp.~\eqref{eq:T}) and uniqueness of the KR-transport, it holds
  $T_{[n]}=(T_j)_{j=1}^k=(H_j)_{j=1}^k=H$.  In order to complete the
  proof, we will apply Thm.~\ref{THM:COR:DINN} to $H$.  To this end we
  need to check Assumption \ref{ass:finite} for the densities
  $\hat f_{\measi,k}$, $\hat f_{\measii,k}:\U{k}\to\R$.  %
  We will do so with the constants
  \begin{equation}\label{eq:newconstants}
  \hat M\dfn \frac{M}{2},\qquad
  \hat L\dfn L+ \frac{M}{2},\qquad
  C_1(M,L)\dfn \min\left\{\frac{ M}{2},{\hat C_1}(\hat M,\hat L)\right\},
  \end{equation}
  where ${\hat C_1}(\hat M,\hat L)$ is as in Thm.~\ref{THM:COR:DINN}.
  Assume for the moment that $\hat f_{\measi,k}$,
  $\hat f_{\measii,k}:\U{k}\to\R$ satisfy Assumption
  \ref{ass:finite} with $\hat M$ and $\hat L$. Then
  Thm.~\ref{THM:COR:DINN} immediately implies the statement of
  Prop.~\ref{PROP:COR:DININF} with
  $C_2(M,L) \dfn {\hat C_2}(\hat M,\hat L)$ and
  $C_3(M,L)\dfn {\hat C_3}(\hat M,\hat L)$, where ${\hat C_2}$ and ${\hat C_3}$ are as
  in Thm.~\ref{THM:COR:DINN}.

  It remains to verify Assumption
  \ref{ass:finite}. %
  We do so item by item and fix $*\in\{\measi,\measii\}$:
  \begin{enumerate}[label=(\alph*)]
  \item By Lemma \ref{LEMMA:FC0}, $\hat f_{*,k}\in C^0(\U{k})$ and
    $\int_{\U{k}}\hat f_{*,k}(\bsx)\dd\mu(\bsx)=\int_U
    f_*(\bsy)\dd\mu(\bsy)=1$, so that $\hat f_{*,k}$ is a positive
    probability density on $\U{k}$.

    Fix $\bsz\in\cB_\bsdelta(\U{1})\subseteq \C^k$ and
    $i\in\{1,\dots,k\}$. We want to show that
    $z_i\mapsto \hat f_{*,k}(\bsz)\in\C$ is complex differentiable for
    $z_i\in\cB_{\delta_i}(\U{1})$. {It holds:
    \begin{itemize}
    \item By Assumption \ref{ass:infdens} \ref{item:cordinN:0inf},
      $\bsy\mapsto f_*(\bsz,\bsy):U\to\C$ is continuous and therefore
      measurable for all $z_i\in \cB_{\delta_i}(\U{1})$.
      \item By Assumption
    \ref{ass:infdens} \ref{item:cordinN:1inf}, for every fixed
    $\bsy\in U$,
    $z_i\mapsto f_*(\bsz,\bsy):\cB_{\delta_i}(\U{1})\to\C$ is
    differentiable.
    \item By Assumption \ref{ass:infdens} \ref{item:cordinN:0inf}
      $f_*:\cB_\bsdelta(\U{1})\times U\to\C$ is continuous.
      Thus, compactness of
    $\bar\cB_r(z_i)\times U$ (w.r.t.\ the product topology), implies
    that for every $z_i\in \cB_{\delta_i}(\U{1})$ with $r>0$ s.t.\
    $\bar\cB_r(z_i)\subseteq \cB_{\delta_i}(\U{1})$, holds
    $\sup_{x\in\cB_{r}(z_i)}\sup_{\bsy\in
      U}|f_*(\bsz_{[i-1]},x,\bsz_{[i+1:k]},\bsy)|<\infty$. Hence
    \begin{equation*}
      z_i\in\cB_{\delta_i}(\U{1})~\Rightarrow~\exists r>0:~
      \sup_{x\in\cB_{r}(z_i)}\int_{\bsy\in
      U}|f_*(\bsz_{[i-1]},x,\bsz_{[i+1:k]},\bsy)|\dd\mu(\bsy)<\infty.
    \end{equation*}
    \end{itemize}}
    According to the main theorem in \cite{MR1823156}, this implies
    $z_i\mapsto \hat f_{*,k}(\bsz)=\int_U f_*(\bsz,\bsy)\dd\mu(\bsy)$
    to be differentiable on $\cB_{\delta_i}(\U{1})$. Since
    $i\in\{1,\dots,k\}$ was arbitrary, Hartog's theorem,
    e.g.~\cite[Thm.~1.2.5]{krantz}, yields
    $\bsz\mapsto \hat f_{*,k}(\bsz):\cB_\bsdelta(\U{1})\to\C$ to be
    differentiable.
  \item By Assumption \ref{ass:infdens} \ref{item:cordinN:2inf}, and
    because $f_*(\bsy)\in\R_+$ for $\bsy\in U$, we have
    $M\le f_*(\bsy)\le L$ for all $\bsy\in U$. Thus
    $\hat f_{*,k}(\bsx)=\int_U f(\bsx,\bsy)\dd\mu(\bsy)\ge M$ and also
    $\hat f_{*,k}(\bsx)\le L$ for all $\bsx\in \U{k}$. Furthermore,
    for $\bsz\in \cB_{\bsdelta}\subseteq\C^k$ and $\bsx\in \U{k}$, by
    Assumption \ref{ass:infdens} \ref{item:cordinN:3inf}
    and \eqref{eq:newconstants}
    \begin{equation*}
      |\hat f_{*,k}(\bsx+\bsz)-\hat f_{*,k}(\bsx)|\le
      \int_U|f_*(\bsx+\bsz,\bsy)-f_*(\bsx,\bsy)|\dd\mu(\bsy)
      \le C_1 \le \frac{M}{2}.
    \end{equation*}
    Thus, %
    with $\hat M=\frac{M}{2}>0$ and $\hat L=L+\frac{\hat M}{2}$
    we have $\hat M\le |\hat f_{*,k}(\bsz)|\le \hat L$ for all
    $\bsz\in \cB_{\bsdelta}(\U{1})$.
  \item For $\bsx\in \U{k}$ by Assumption \ref{ass:infdens}
    \ref{item:cordinN:3inf} and \eqref{eq:newconstants}
    \begin{equation*}
      \sup_{\bsz\in 
        \cB_{\bsdelta}}|\hat f_{*,k}(\bsx+\bsz)-\hat f_{*,k}(\bsx)| \le
      \sup_{\bsy\in \cB_{\bsdelta}}
      \int_U |f_{*}(\bsx+\bsz,\bsy)-f_{*}(\bsx,\bsy)|\dd\mu(\bsy)
      \le C_1(M,L) \le {\hat C_1}(\hat M,\hat L).
    \end{equation*}    
  \item For $\bsx\in\U{k}$ and $j\in\{1,\dots,k-1\}$
    by Assumption \ref{ass:infdens} \ref{item:cordinN:4inf}
    and \eqref{eq:newconstants}
    \begin{align*}
      \sup_{\bsz\in
      \cB_{\bsdelta_{[j]}}\times \{0\}^{k-j}}|\hat f_{*,k}(\bsx+\bsz)-\hat f_{*,k}(\bsx)|&\le \sup_{\bsz\in
                                                                                           \cB_{\bsdelta_{[j]}}\times \{0\}^{\N}}
                                                                                           \int_U |f_{*}(\bsx+\bsz,\bsy)-f_{*}(\bsx,\bsy)|\dd\mu(\bsy)
                                                                                           \nonumber\\
                                                                                         &\le C_1(M,L) \delta_{j+1}
                                                                                           \le {\hat C_1}(\hat M,\hat L)\delta_{j+1}.\qedhere
    \end{align*}
  \end{enumerate}
\end{proof}

\subsection{Verifying Assumption \ref{ass:infdens}}\label{app:lemma:scrfhol}
In this section we show that densities as in Assumption
\ref{ass:density} satisfy Assumption \ref{ass:infdens}.

\begin{lemma}\label{LEMMA:SCRFHOL}
  {Let $f(\bsy)=\scr{f}(\sum_{j\in\N}y_j\psi_j)$ satisfy Assumption
    \ref{ass:density} for some $p\in (0,1)$ and $0<M\le L<\infty$.
    Let $(b_j)_{j\in\N}\subset (0,\infty)$ be summable and such that
    $b_j\ge \norm[Z]{\psi_j}$ for all $j\in\N$. Let $C_1=C_1(M,L)>0$
    be as in Prop.~\ref{PROP:COR:DININF}.}

  There
  exists a monotonically increasing sequence
  $(\kappa_j)_{j\in\N}\in (0,\infty)^\N$ and $\tau>0$ (depending on
  $(b_j)_{j\in\N}$, $C_1$ and $\scr{f}$) such that for every
  fixed $J\in\N$, $k\in\N$ and $\bsnu\in\N_0^k$, with
  \begin{equation}\label{eq:rhonu}
    \delta_j{=\delta_j(J,\bsnu)}\dfn \kappa_j+ %
    \begin{cases}
      0 &j<J,~j\neq k\\
      \frac{\tau \nu_j}{(\sum_{i=J}^{k-1}\nu_i)b_j} &j\ge J,~j\neq k\\
      \frac{\tau}{b_j} &j=k
    \end{cases}
    \qquad\qquad\forall j\in\{1,\dots,k\},
  \end{equation}
  $f$ satisfies Assumption \ref{ass:infdens}.
\end{lemma}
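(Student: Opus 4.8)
The plan is to derive Assumption \ref{ass:infdens} from two ingredients: compactness of the ``footprint'' $K\dfn\set{\sum_{j\in\N}y_j\psi_j}{\bsy\in U}\subseteq Z_\C$, and a Cauchy estimate for the bounded holomorphic function $\scr f$. The sequence $(\kappa_j)$ and the scalar $\tau$ are then chosen small enough, and $(\kappa_j)$ slowly-growing enough, that all five items hold simultaneously and uniformly in $J$, $k$ and $\bsnu$.

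First I would record preliminaries. Since $b_j\ge\norm[Z]{\psi_j}$ and $(b_j)_{j\in\N}\in\ell^1$, for any $k\in\N$ and $\bsdelta\in(0,\infty)^k$ the series $\Psi(\bsy)\dfn\sum_{j\in\N}y_j\psi_j$ converges uniformly on $\cB_\bsdelta(\U1)\times U$ (there $\sum_{j\in\N}|y_j|\norm[Z]{\psi_j}\le\sum_{j\in\N}b_j+\sum_{j=1}^k\delta_j b_j<\infty$), so $\Psi$ is continuous and $K=\Psi(U)$ is compact in $Z_\C$; hence $r\dfn\dist(K,Z_\C\setminus O_Z)>0$ (put $r\dfn 1$ if $O_Z=Z_\C$). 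As $\scr f$ is holomorphic with $|\scr f|\le L$ on $O_Z$, applying the one-dimensional Cauchy estimate to $t\mapsto\scr f(\psi^*+t h/\norm[Z_\C]{h})$ on the disc $\{|t|<r/2\}$ gives, with $\Lambda_{\scr f}\dfn 4L/r$,
\begin{equation}\label{eq:planlip}
  |\scr f(\psi^*+h)-\scr f(\psi^*)|\le\Lambda_{\scr f}\norm[Z_\C]{h}\qquad\text{whenever }\psi^*\in K,\ \norm[Z_\C]{h}\le\tfrac r4 .
\end{equation}

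Next I would fix parameters. Set $\theta\dfn C_1/(2\Lambda_{\scr f})$, $\mu_0\dfn\min\{r/4,\,C_1/\Lambda_{\scr f}\}$, and $A\dfn\big(\sum_{j\in\N}b_j\big)\prod_{j\in\N}(1+b_j/\theta)<\infty$. Let $\kappa_1\dfn\mu_0/(2A)$, define recursively $\kappa_{j+1}\dfn\max\{\kappa_j,\ \theta^{-1}\sum_{i=1}^j\kappa_i b_i\}$, and set $\tau\dfn\min\{\mu_0/4,\ \theta\kappa_1\}>0$. Then $(\kappa_j)$ is positive and monotonically increasing, and by construction
\begin{equation}\label{eq:plandom}
  \Lambda_{\scr f}\sum_{i=1}^j\kappa_i b_i\le\tfrac{C_1}{2}\,\kappa_{j+1}\qquad\forall\,j\in\N .
\end{equation}
Writing $S_j\dfn\sum_{i=1}^j\kappa_i b_i$ and verifying $\max\{\kappa_j,\theta^{-1}S_j\}\le\kappa_1+\theta^{-1}S_j$ by descending through the recursion, one gets $S_{j+1}\le(1+b_{j+1}/\theta)S_j+\kappa_1 b_{j+1}$, hence $S_j\le\kappa_1 A=\mu_0/2$ for all $j$; in particular $\sum_{j\in\N}\kappa_j b_j\le\mu_0/2$ and $(\kappa_j)$ is bounded. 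Finally a direct computation with \eqref{eq:rhonu} — using $\sum_{i=J}^{k-1}\nu_i\big/\sum_{l=J}^{k-1}\nu_l=1$ (a summand being read as $0$ when $\sum_{l=J}^{k-1}\nu_l=0$) and $\delta_j\ge\kappa_j$ — gives for every $k$ and every $1\le j\le k-1$
\begin{equation*}
  \sum_{i=1}^k\delta_i b_i\le\sum_{i\in\N}\kappa_i b_i+2\tau\le\mu_0 ,\qquad\quad
  \sum_{i=1}^j\delta_i b_i\le\sum_{i=1}^j\kappa_i b_i+\tau .
\end{equation*}

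It remains to verify the five items. The $Z_\C$-norm of the first-$k$-coordinate complex perturbation occurring in $\cB_\bsdelta(\U1)\times U$ is at most $\sum_{i\le k}\delta_i b_i\le\mu_0\le r/4<r$, so $\Psi$ maps $\cB_\bsdelta(\U1)\times U$ into $O_Z$; thus $f=\scr f\circ\Psi$ is continuous there, $\bsx\mapsto f(\bsx,\bsy)$ is holomorphic (hence $C^1$) as a composition of an affine map with $\scr f$, and (c) is Assumption \ref{ass:density} \ref{item:densityML}. On $U$, $f$ is $\R$-valued with $\int_U f\dd\mu=1$ and $|f|\ge M>0$ on the connected set $U$, so $f:U\to\R_+$ is a probability density; this yields (a) and (b). For (d) and (e), $f(\bsx+\bsy)-f(\bsx)=\scr f(\Psi(\bsx)+h)-\scr f(\Psi(\bsx))$ with $\Psi(\bsx)\in K$ and $h$ a finite $Z_\C$-combination of the $\psi_i$ of norm at most the relevant partial sum $\le\mu_0\le r/4$, so \eqref{eq:planlip} gives $|f(\bsx+\bsy)-f(\bsx)|\le\Lambda_{\scr f}\sum\delta_i b_i$; this is $\le\Lambda_{\scr f}\mu_0\le C_1$ for (d), and for (e) it is $\le\Lambda_{\scr f}\big(\sum_{i=1}^j\kappa_i b_i+\tau\big)\le\tfrac{C_1}{2}\kappa_{j+1}+\tfrac{C_1}{2}\kappa_{j+1}=C_1\kappa_{j+1}\le C_1\delta_{j+1}$, using \eqref{eq:plandom}, $\Lambda_{\scr f}\tau\le\Lambda_{\scr f}\theta\kappa_1=\tfrac{C_1}{2}\kappa_1\le\tfrac{C_1}{2}\kappa_{j+1}$, and $\delta_{j+1}\ge\kappa_{j+1}$. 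The crux is the construction of $(\kappa_j)$: it must be monotone, make $\sum_j\kappa_j b_j$ arbitrarily small, \emph{and} satisfy the one-step dominance \eqref{eq:plandom} — exactly the inequality that forces the delicate bound (e), where a perturbation of the first $j$ variables is controlled by the single radius $\delta_{j+1}$, to hold uniformly in $k$, $J$ and $\bsnu$; items (a)--(c) are then routine once $K$ is known to be compact.
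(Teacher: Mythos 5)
Your proof is correct, and it takes a genuinely different route from the paper's. The paper's Step 1 only invokes \emph{continuity} of $\frf$ on the compact footprint $K$ to get (via uniform continuity) a radius $r>0$ with small oscillation, and then in Step 2 builds $(\kappa_j)$ through the auxiliary Lemma \ref{lemma:kappajinfty} so that $\kappa_j\to\infty$ while $\sum_j\kappa_jb_j$ stays small; verification of item \ref{item:cordinN:4inf} is then split into two regimes (large $j$, where the trivial bound $2C_\frf\le C_1\kappa_{j+1}$ kicks in because $\kappa_{j+1}$ is eventually huge, and small $j<d$, where $\kappa_j$ is shrunk inductively using uniform continuity). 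You instead exploit the holomorphy and boundedness of $\frf$ on $O_Z$ to get a \emph{uniform Lipschitz constant} $\Lambda_\frf=4L/r$ near $K$ via a one-dimensional Cauchy estimate, which converts both \ref{item:cordinN:3inf} and \ref{item:cordinN:4inf} into linear norm bounds; you then build a \emph{bounded} monotone $(\kappa_j)$ by the recursion $\kappa_{j+1}=\max\{\kappa_j,\theta^{-1}\sum_{i\le j}\kappa_ib_i\}$, whose defining inequality is precisely the one-step dominance needed for \ref{item:cordinN:4inf}. Your approach is cleaner in that it dispenses with Lemma \ref{lemma:kappajinfty} and the two-regime case split, at the modest cost of invoking differentiability (rather than just continuity) of $\frf$ at this point — which is available under Assumption \ref{ass:density} and is used elsewhere in the paper's proof anyway. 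Since the only property of $(\kappa_j)$ consumed downstream (in the proof of Thm.~\ref{THM:TINF}) is $\kappa_{\rm min}=\min_j\kappa_j>0$, your bounded construction serves just as well; the unboundedness of the paper's $(\kappa_j)$ is incidental to the argument, not essential.
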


\begin{lemma}\label{lemma:kappajinfty}
  Let $\bsb=(b_j)_{j\in\N}\in\ell^1(\N)$ with $b_j\ge 0$ for all $j$,
  and let $\gamma>0$. There exists
  $(\kappa_j)_{j\in\N}\subset (0,\infty)$ monotonically increasing and
  such that $\kappa_j\to\infty$ and $\sum_{j\in\N}b_j\kappa_j<\gamma$.
\end{lemma}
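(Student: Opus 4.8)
The plan is to reduce the statement to the classical fact that any nonnegative summable sequence can be multiplied by a sequence tending to $\infty$ while remaining summable, and then to rescale the resulting weights so that the sum respects the budget $\gamma$. To sidestep the degenerate cases in which $\bsb$ vanishes identically or has finite support (which would make the tail sums eventually zero), I would first replace $b_j$ by the strictly positive summable sequence $\tilde b_j\dfn b_j+2^{-j}$ and work with its tail sums $R_m\dfn\sum_{j\ge m}\tilde b_j$; these are strictly positive, monotonically decreasing, and tend to $0$ since $\tilde b\in\ell^1(\N)$.

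Next I would take $\tilde\kappa_j\dfn R_j^{-1/2}$. Positivity and monotone decrease of $R_j$ make $\tilde\kappa_j$ positive and monotonically increasing, and $R_j\to0$ gives $\tilde\kappa_j\to\infty$. The one computation to carry out is that this weight preserves summability: writing $\tilde b_j=R_j-R_{j+1}$ and using $R_j-R_{j+1}=(\sqrt{R_j}-\sqrt{R_{j+1}})(\sqrt{R_j}+\sqrt{R_{j+1}})\le 2\sqrt{R_j}\,(\sqrt{R_j}-\sqrt{R_{j+1}})$, the series telescopes and
\[
\sum_{j\in\N}b_j\tilde\kappa_j\le\sum_{j\in\N}\tilde b_j\tilde\kappa_j=\sum_{j\in\N}\frac{R_j-R_{j+1}}{\sqrt{R_j}}\le 2\sum_{j\in\N}\big(\sqrt{R_j}-\sqrt{R_{j+1}}\big)=2\sqrt{R_1}=:A<\infty .
\]

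Finally I would set $\kappa_j\dfn\tfrac{\gamma}{A+1}\tilde\kappa_j$, which is positive, monotonically increasing, diverges to $\infty$, and satisfies $\sum_{j\in\N}b_j\kappa_j\le\tfrac{\gamma}{A+1}A<\gamma$. There is no serious obstacle here; the only points that need attention are the degenerate cases (handled by the $+2^{-j}$ perturbation) and the telescoping estimate above, which is the heart of the argument. If one insists on a strictly increasing sequence rather than a merely non-decreasing one, it suffices to add $\eps(1-2^{-j})$ with $\eps>0$ chosen so that $\eps\sum_{j\in\N}b_j<\tfrac{\gamma}{2}$ and to split the budget between the two terms; but "monotonically increasing" is understood throughout as non-decreasing.
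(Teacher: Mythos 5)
Your proof is correct, and it takes a genuinely different route from the paper's. The paper first splits off the degenerate finite-support case, and otherwise builds $\tilde\kappa_j$ as a step function: it fixes indices $j_n$ at which the tail sum $\sum_{i\ge j_n}b_i$ drops below $2^{-n}$ and sets $\tilde\kappa_j = n$ on $[j_n,j_{n+1})$, so the key estimate is $\sum_j b_j\tilde\kappa_j \le \sum_{j<j_1}b_j + \sum_n n2^{-n}<\infty$, followed by a normalization. You instead perturb to $\tilde b_j = b_j + 2^{-j}$, which elegantly eliminates the case distinction and guarantees strictly positive, strictly decreasing tails $R_j$, and then take $\tilde\kappa_j = R_j^{-1/2}$ with the telescoping bound $\sum_j \tilde b_j R_j^{-1/2}\le 2\sum_j(\sqrt{R_j}-\sqrt{R_{j+1}})=2\sqrt{R_1}$. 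This is a classical and clean trick (it is essentially the proof that any $\ell^1$ sequence can absorb an unbounded weight). Both arguments exploit the same structural fact — that the tail sums $R_m$ decay to $0$ — but your construction is continuous rather than step-based and handles all cases uniformly. One small observation: since $\tilde b_j>0$ for all $j$, $R_j$ is strictly decreasing and $\tilde\kappa_j$ is already strictly increasing, so the $\eps(1-2^{-j})$ modification you sketch at the end is unnecessary. Also note that the paper's final normalization $\kappa_j = \gamma\tilde\kappa_j/\sum_j b_j\tilde\kappa_j$ gives equality rather than the claimed strict inequality $\sum_j b_j\kappa_j<\gamma$; your choice $\gamma/(A+1)$ avoids that minor slip.
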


\begin{proof}
  If there exists $d\in\N$ such that $b_j=0$ for all $j>d$, then the
  statement is trivial. Otherwise, for $n\in\N$ set
  $j_n\dfn {\min\set{j\in\N}{\sum_{i\ge j}b_i\le 2^{-n}}}$. Since
  $\bsb\in\ell^1(\N)$, $(j_n)_{n\in\N}$ is well-defined, monotonically
  increasing, and tends to infinity (it may have repeated
  entries). For $j\in\N$ let
  \begin{equation*}
    \tilde\kappa_j\dfn \begin{cases}
      1 &\text{if }j<j_1\\
      n &\text{if }j\in \N\cap [j_n,j_{n+1}),
    \end{cases}
  \end{equation*}
  which is well-defined since $j_n\to\infty$ so that
  \begin{equation*}
    \N=\{1,\dots,j_1\}\cup \bigcup_{n\in\N}(\N\cap [j_n,j_{n+1}))
  \end{equation*}
  and those sets are disjoint, in particular if $j_n=j_{n+1}$ then
  $[j_n,j_{n+1})\cap\N=\emptyset$. Then
  \begin{equation*}
    \sum_{j\in\N}b_j\tilde\kappa_j = \sum_{j=1}^{j_1-1}b_j+\sum_{j\ge j_1}b_j\tilde\kappa_j
    = \sum_{j=1}^{j_1-1}b_j+\sum_{n\in\N}\sum_{j=j_n}^{j_{n+1}-1} b_j\tilde\kappa_j\le \sum_{j=1}^{j_1-1}b_j+\sum_{n\in\N}n 2^{-n}<\infty.
  \end{equation*}
  Set
  $\kappa_j\dfn
  \frac{\gamma\tilde\kappa_j}{\sum_{j\in\N}b_j\tilde\kappa_j}$.
\end{proof}

\begin{proof}[Proof of Lemma \ref{LEMMA:SCRFHOL}]
  In Steps 1-2 we will construct
  $(\kappa_j)_{j\in\N}\subset (0,\infty)$ and $\tau>0$ independent of
  $J\in\N$, $k\in\N$ and $\bsnu\in\N_0^k$.  In Steps 3-4, we verify
  that $(\kappa_j)_{j\in\N}$ and $\tau$ have the desired properties.

  {Moreover, we will use that $Z$ is a Banach space, $Z_\C$ its
  complexification as introduced in (and before) Assumption
  \ref{ass:density}, and $\psi_j\in Z\subseteq Z_\C$ for all $j$.}
  
  {\bf Step 1.} Set
  $K\dfn \set{\sum_{j\in\N}y_j\psi_j}{\bsy\in U}\subseteq Z$.
  According to \cite[Rmk.~2.1.3]{JZdiss},
  $\bsy\mapsto\sum_{j\in\N}y_j\psi_j:U\to Z$ is continuous
  and $K\subseteq Z$ is compact (as the image of a compact set under a
  continuous map). Compactness of $K$ and continuity of $\scr{f}$
  imply $\sup_{\psi\in K}|\scr{f}(\psi)|<\infty$ and
  \begin{equation}\label{eq:choosealpha}
    \lim_{\eps\to 0}\sup_{\norm[Z]{\psi}<\eps}\sup_{\phi\in K}|\scr{f}(\psi+\phi)-\scr{f}(\phi)|=0.
  \end{equation}
  Hence there exists $r>0$ {such that with $O_Z\subseteq Z_\C$ from Assumption \ref{ass:density}
  \begin{equation}\label{eq:OZK}
    \set{\phi+\psi}{\phi\in K,~\norm[Z_\C]{\psi}<r}\subseteq O_Z
  \end{equation}
  and}
  \begin{equation}\label{eq:Cscrf}
    C_{\scr{f}}\dfn \sup_{\norm[Z]{\psi}<r}\sup_{\phi\in K}|\scr{f}(\phi+\psi)|<\infty
  \end{equation}
  and
  \begin{equation}\label{eq:choicer}
    \sup_{\norm[Z]{\psi}<r}\sup_{\phi\in K}|\scr{f}(\psi+\phi)-\scr{f}(\phi)|<{C_1}.
  \end{equation}

  {\bf Step 2.} We show the existence of
  $\bskappa=(\kappa_j)_{j\in\N}\subset (0,\infty)$ monotonically
  increasing, %
  and $\tau>0$ such that with
  $r>0$ from Step 1
  \begin{equation}\label{eq:kappabjalphar}
    \sum_{j\in\N}\kappa_jb_j+2\tau<r,
  \end{equation}
  and additionally for every $j\in\N$ with $K\subseteq Z$ from Step 1
  \begin{equation}\label{eq:toshowkappa}
    \sup_{\bsz\in\cB_{\bskappa_{[j]}}\times\{0\}^\N}
    \sup_{\norm[Z_\C]{\psi}<2\tau}
    \sup_{\phi\in K}
    \left|\scr{f}\left(\phi+\psi+\sum_{j\in\N}z_j\psi_j\right)-\scr{f}\left(\phi\right)\right|\le {C_1} \kappa_{j+1}.
  \end{equation}

  Let $(\tilde\kappa_j)_{j\in\N}\to\infty$ be as in Lemma
  \ref{lemma:kappajinfty} such that
  $\sum_{j\in\N}\tilde\kappa_jb_j<\frac{r}{3}$ and with
  $\tilde\tau\dfn \frac{r}{3}$ it holds
  \begin{equation*}
    \sum_{j\in\N}\tilde\kappa_jb_j+2\tilde\tau<r.
  \end{equation*}
  Since $\tilde\kappa_j\to\infty$ as $j\to\infty$, %
  there exists $d\in\N$ such that
  ${C_1}\tilde\kappa_{j+1}\ge 2 C_{\scr{f}}$ for all $j\ge d$
  (with $C_{\scr{f}}$ as in \eqref{eq:Cscrf}). For all
  $\bsz\in\cB_{\tilde\bskappa}{\subseteq\C^\N}$ using
  $\norm[Z]{\psi_j}\le b_j$
  \begin{equation*}
    \sup_{\norm[Z_\C]{\psi}<2\tilde\tau}
    \normc[Z_\C]{\psi+\sum_{j\in\N}z_j\psi_j}
    \le 2\tilde\tau+\sum_{j\in\N}\tilde\kappa_j \norm[Z]{\psi_j}
    \le 2\tilde\tau+\sum_{j\in\N}\tilde\kappa_jb_j\le r
  \end{equation*}
  and thus by \eqref{eq:Cscrf}
  for $\phi\in K$ and $\norm[Z_\C]{\psi}<2\tilde\tau$
  \begin{equation}\label{eq:geJ}
    \left|\scr{f}\left(\phi+\psi+\sum_{j\in\N}z_j\psi_j\right)
      -\scr{f}\left(\phi\right)\right|\le 2C_{\scr{f}}
    \le {C_1}\tilde\kappa_{j+1}
  \end{equation}
  for all $j\ge d$. Hence \eqref{eq:toshowkappa} holds for
  $\tilde\bskappa$
  for all
  $j\ge d$.

  To finish the construction of $\bskappa$, first define
  $\kappa_j\dfn \tilde\kappa_j$ for all $j\ge d$. For
  $k\in\{1,\dots,d-1\}$, inductively (starting with $k=d-1$ and going
  backwards) let $\tilde\tau_k>0$ and $\kappa_k\in (0,\kappa_{k+1})$
  be so small that
  \begin{equation}\label{eq:leJ}
    \sup_{\substack{|z_j|\le\kappa_k\\ \forall j\le k}}
    \sup_{\norm[Z]{\psi}<2\tilde \tau_k}
    \sup_{\phi\in K}
    \left|\scr{f}\left(\phi+\psi+\sum_{j=1}^kz_j\psi_j\right)
      -\scr{f}\left(\phi\right)\right|\le {C_1}\kappa_{k+1}
  \end{equation}
  which is possible due to \eqref{eq:choosealpha} and because
  ${C_1}\kappa_{k+1}>0$. Letting
  $\tau\dfn\min\{\tilde\tau,\tilde\tau_1,\dots,\tilde\tau_d\}$,
  it now follows by \eqref{eq:geJ} and \eqref{eq:leJ} that
  \eqref{eq:toshowkappa} holds for all $j\in\N$.

  {\bf Step 3.} We verify Assumption \ref{ass:infdens}
  \ref{item:cordinN:0inf}, \ref{item:cordinN:1inf}
  and \ref{item:cordinN:2inf}. Fix $J\in\N$,
  $k\in\N$ and $\bsnul\neq \bsnu\in\N_0^k$. By definition of
  $\bsdelta\in\R^k$ in %
  \eqref{eq:rhonu} and with $b_j\ge \norm[Z]{\psi_j}$
  \begin{align*}
    \sup_{\bsz\in\cB_\bsdelta\times\{0\}^\N}\sup_{\bsy\in U}\normc[Z]{\sum_{j\in\N}(y_j+z_j)\psi_j}
    &\le\sum_{j\in\N}b_j(1+\kappa_j)+\tau \frac{b_k}{b_k}
    +\sum_{j=J}^{k-1}\frac{\tau\nu_j b_j}{(\sum_{i=J}^{k-1}\nu_i) b_j}\nonumber\\
    &\le \sum_{j\in\N}b_j+\sum_{j\in\N}\kappa_jb_j+2\tau
    <\infty
  \end{align*}
  and, similarly, by \eqref{eq:kappabjalphar}
  \begin{equation}\label{eq:psilessr}
    \sup_{\bsz\in\cB_\bsdelta\times\{0\}^\N}\sup_{\bsy\in U}\normc[Z]{\sum_{j\in\N}(y_j+z_j)\psi_j-\sum_{j\in\N}y_j\psi_j}
    \le \sum_{j\in\N}\kappa_jb_j+2\tau<r.
  \end{equation}
  Thus by \eqref{eq:OZK},
  $f(\bsy+\bsz)=\scr{f}(\sum_{j\in\N}(y_j+z_j)\psi_j)$ is well-defined
  for all $\bsy\in U$, $\bsz\in\cB_\bsdelta\times\{0\}^\N$,
  {since then $\sum_{j\in\N}(y_j+z_j)\psi_j\in O_Z$, where
    $O_Z\subseteq Z_\C$ is the domain of definition of $\frf$.}
  Summability of $(\norm[Z]{\psi_j})_{j\in\N}$ implies continuity of
  $f:\cB_{\bsdelta}(\U{1})\times U\to O_Z$ w.r.t.\ the product topology
  on $\cB_{\bsdelta}(\U{1})\times U\subseteq\C^\N$ (see, e.g.,
  \cite[Rmk.~2.1.3]{JZdiss}).  Continuity of
  $\scr{f}: O_Z\to \C$ thus
  implies $f\in C^0(\cB_\bsdelta\times\{0\}^\N;\C)$, i.e.,
  Assumption \ref{ass:infdens} \ref{item:cordinN:0inf} holds.

  Differentiability of $\scr{f}$
  implies that $f(\bsz)=\scr{f}(\sum_{j\in\N}z_j\psi_j)$ is
  differentiable in each $z_j$ for
  $\bsz\in\cB_\bsdelta(\U{1})\times U$, proving
  Assumption \ref{ass:infdens} \ref{item:cordinN:1inf}. Finally
  Assumption \ref{ass:infdens} \ref{item:cordinN:2inf} is a consequence of
  Assumption \ref{ass:density} \ref{item:densityML}.
  
  {\bf Step 4.} We show Assumption \ref{ass:infdens}
  \ref{item:cordinN:3inf} and \ref{item:cordinN:4inf}. Fix $k\in\N$,
  $\bsnul\neq\bsnu\in\N_0^k$ and $J\in\N$. Then for any 
  $\bsz\in\cB_{\bsdelta}\subseteq\C^k$,
  by \eqref{eq:rhonu}
  we can write
  $z_i=z_{i,1}+z_{i,2}$ with
  {
  \begin{equation*}
    |z_{i,1}|\le 
      \kappa_i,\qquad
      |z_{i,2}|\le \begin{cases}
        0 &i<J,~i\neq k\\
      \frac{\tau\nu_i}{(\sum_{r=J}^{k-1}\nu_r)b_i}&i\ge J,~i\neq k\\
      \frac{\tau}{b_k}&\text{if }i=k
    \end{cases}
    \qquad\qquad\forall i\le k.
    \end{equation*}}
    Thus for any
    $\bsy\in U$, $j\in\{1,\dots,k-1\}$ and $\bsz\in\cB_{\bsdelta_{[j]}}\times\{0\}^\N$
  \begin{equation*}
    \sum_{i\in\N}(y_i+z_i)\psi_i
    = \underbrace{\left(\sum_{i\in\N}y_i\psi_i\right)}_{\dfnn\phi}+
    \underbrace{\left(\sum_{i=1}^j z_{i,2} \psi_i\right)}_{\dfnn
      \psi}+
    \left(\sum_{i=1}^j z_{i,1} \psi_i\right).
  \end{equation*}
  With $K\subseteq Z$ from Step 1,
  $\norm[Z_\C]{\psi}\le \sum_{i\in\N}\tau
  \frac{\nu_ib_i}{(\sum_{r=J}^{k-1}\nu_r)b_i}+\tau\frac{b_k}{b_k}\le2\tau$
  and $\phi\in K$. Thus \eqref{eq:toshowkappa} %
  implies Assumption \ref{ass:infdens} \ref{item:cordinN:4inf}.
  Finally, %
  Assumption \ref{ass:infdens} \ref{item:cordinN:3inf} is a
  consequence of \eqref{eq:choicer} and \eqref{eq:psilessr}.
\end{proof}

\section{Proofs of Sec.~\ref{sec:polinfty}}
\subsection{Thm.~\ref{THM:TINF}}\label{app:tinf}
First we show two summability results, similar to \cite[Lemma
7.1]{CDS10} and \cite[Theorem 7.2]{CDS10}. {In the following we write
$\N=\{1,2,3,\dots\}$ and $\N_0=\{0,1,2,\dots\}$. For an index set
$I\subseteq\N$, $\bsnu=(\nu_j)_{j\in I}\in\N_0^I$ and
$\bsvarrho=(\varrho_j)_{j\in I}\in [0,\infty)^I$ we use the notation
\begin{equation*}
  \supp\bsnu=\set{j\in I}{\nu_j\neq 0},\qquad
|\bsnu|\dfn \sum_{j\in \supp\bsnu}\nu_j,\qquad
\bsvarrho^{\bsnu}\dfn\prod_{j\in \supp\bsnu}\varrho_j^{\nu_j},
\end{equation*}
where empty sums equal $0$ and empty products equal $1$.
}

\begin{lemma}\label{lemma:Sxi0}
  Let $\tau>0$ and let $\bsvarrho\in (1,\infty)^\N$ be such that
  $(\varrho_j^{-1})\in\ell^p(\N)$ for some $p\in (0,1]$ and
  additionally $\sup_{j\in\N}\varrho_j^{-1}<1$. Then with
  $\gamma(\bsvarrho,\bsnu)$ as in \eqref{eq:gamma},
  \begin{equation*}
    \sum_{k\in\N} \sum_{\bsnu\in\N_0^k}\gamma(\bsvarrho,\bsnu)^p\prod_{j=1}^k(1+2\nu_j)^\tau<\infty.
  \end{equation*}
\end{lemma}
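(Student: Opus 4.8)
The plan is to reduce the sum over multi-indices $\bsnu\in\N_0^k$ to a product of one-dimensional series, and then to bound the remaining sum over $k$ by a constant times $\sum_{k\in\N}\varrho_k^{-p}$, which is finite by hypothesis. Write $c_j\dfn\varrho_j^{-1}$, so that $c_j\in(0,1)$, $\theta\dfn\sup_{j\in\N}c_j<1$, and $\sum_{j\in\N}c_j^p<\infty$. From the definition \eqref{eq:gamma} we have $\gamma(\bsvarrho,\bsnu)^p=c_k^{p\max\{1,\nu_k\}}\prod_{j=1}^{k-1}c_j^{p\nu_j}$, so for fixed $k$ the summand $\gamma(\bsvarrho,\bsnu)^p\prod_{j=1}^k(1+2\nu_j)^\tau$ is a product of factors, each depending on a single coordinate $\nu_j$. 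Since all terms are nonnegative, Tonelli's theorem yields
\[
\sum_{\bsnu\in\N_0^k}\gamma(\bsvarrho,\bsnu)^p\prod_{j=1}^k(1+2\nu_j)^\tau=h(c_k)\prod_{j=1}^{k-1}g(c_j),
\]
where $g(c)\dfn\sum_{\nu=0}^\infty c^{p\nu}(1+2\nu)^\tau$ and $h(c)\dfn\sum_{\nu=0}^\infty c^{p\max\{1,\nu\}}(1+2\nu)^\tau=c^p+(g(c)-1)$; both series converge because $c^p\le\theta^p<1$.

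Next I would prove the elementary estimate $g(c)-1\le A\,c^p$ for every $c\in(0,\theta]$, where $A\dfn\sum_{\nu=1}^\infty\theta^{p(\nu-1)}(1+2\nu)^\tau<\infty$. This follows by factoring out one power of $c^p$: $g(c)-1=c^p\sum_{\nu=1}^\infty c^{p(\nu-1)}(1+2\nu)^\tau\le c^p\sum_{\nu=1}^\infty\theta^{p(\nu-1)}(1+2\nu)^\tau$. Using $1+x\le\e^x$, this gives the uniform bound
\[
\prod_{j=1}^{k-1}g(c_j)\le\prod_{j=1}^{k-1}(1+A\,c_j^p)\le\exp\Big(A\sum_{j\in\N}c_j^p\Big)\dfnn B<\infty
\]
for every $k\in\N$, while for the distinguished $k$-th factor $h(c_k)=c_k^p+(g(c_k)-1)\le(1+A)\,c_k^p$.

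Combining the three displays, I would conclude
\[
\sum_{k\in\N}\sum_{\bsnu\in\N_0^k}\gamma(\bsvarrho,\bsnu)^p\prod_{j=1}^k(1+2\nu_j)^\tau\le B(1+A)\sum_{k\in\N}c_k^p<\infty.
\]
I do not expect a genuine obstacle here; the argument is a variant of \cite[Lemma 7.1]{CDS10}. The points needing the most care are organisational: keeping track of the special role of the $k$-th coordinate (it is the exponent $\max\{1,\nu_k\}$ in \eqref{eq:gamma} that forces the $k$-th factor to be $h(c_k)=O(c_k^p)$ rather than a quantity bounded below by $1$, which is precisely what makes the outer sum over $k$ converge), and ensuring that the constants $A$ and $B$ depend only on $p$, $\tau$ and $\theta=\sup_j\varrho_j^{-1}$, not on $k$.
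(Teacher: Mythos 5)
Your proof is correct. It does, however, take a genuinely different route from the paper. The paper starts from the global summability estimate $\sum_{\{\bsnu\in\N_0^\N : |\bsnu|<\infty\}}\bsvarrho^{-p\bsnu}\prod_j(1+2\nu_j)^\tau<\infty$ (cited from a companion result, \cite[Lemma 3.10]{ZS17}), decomposes the set of finitely supported multi-indices by the largest non-zero coordinate, and then partitions each $\N_0^k$ into $\N_0^{k-1}\times\{0\}$ and $\N_0^{k-1}\times\N$ to extract the extra factor $\varrho_k^{-p}$; the bound is then read off as $(1+\sum_k\varrho_k^{-p})\,C_0$. You instead tensorize the inner sum over $\bsnu\in\N_0^k$ directly into a product of one-dimensional series $h(c_k)\prod_{j<k}g(c_j)$ and bound $g$ and $h$ by elementary geometric-series and $1+x\le\e^x$ estimates. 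Both arguments isolate the same key mechanism — the exponent $\max\{1,\nu_k\}$ in \eqref{eq:gamma} forces the $k$-th factor to be $O(\varrho_k^{-p})$, which is what makes the outer sum over $k$ converge — but yours is self-contained and essentially reproves the tensor-product summability lemma from scratch, whereas the paper's is shorter at the cost of invoking an external result. Either version is acceptable; if you keep yours, it may be worth noting that it subsumes the content of the cited lemma.
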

\begin{proof}
  {The assumptions on $\bsvarrho$ imply
  \begin{equation*}
    C_0\dfn \sum_{\set{\bsnu\in\N_0^\N}{|\bsnu|<\infty}}\bsvarrho^{-p\bsnu}\prod_{j\in\N}(1+2\nu_j)^{\tau}<\infty
  \end{equation*}
  see for example \cite[Lemma 3.10]{ZS17}.  Let
  $\bsnul\dfn (0)_{j\in\N}\in\N_0^\N$. For any
  $\bsnul\neq \bsnu\in\N_0^\N$ with $|\bsnu|<\infty$, we have
  $\bsnu=(\bseta,\bsnul)$ with $\bseta\in\N_0^{k-1}\times\N$ and
  $k\dfn \max_j\nu_j\neq 0$. Thus with the convention
  $\N_0^0\times\N\dfn \N$,
  \begin{equation*}
    \set{\bsnu\in\N_0^\N}{|\bsnu|<\infty}=\{\bsnul\}\cup\bigcup_{k\in\N}\set{(\bsnu,\bsnul)}{\bsnu\in\N_0^{k-1}\times\N}.
  \end{equation*}
  Hence
\begin{equation*}
  1+\sum_{k\in\N}\sum_{\bsnu\in\N_0^{k-1}\times\N}\bsvarrho_{[k]}^{-p\bsnu}\prod_{j=1}^k(1+2\nu_j)^{\tau}= C_0.
\end{equation*}
Using the convention $\bsvarrho_{[0]}^{-\bsnu_{[0]}}=1$, by definition
  \begin{equation*}
    \gamma(\bsvarrho,\bsnu)=
    \begin{cases}
      \varrho_k^{-1}\bsvarrho_{[k-1]}^{-\bsnu_{[k-1]}} &\text{if }\nu_k=0\\
      \bsvarrho_{[k]}^{-\bsnu} &\text{if }\nu_k>0
    \end{cases}
    \qquad\qquad\forall\bsnu\in\N_0^k.
  \end{equation*}
  Partitioning $\N_0^k=(\N_0^{k-1}\times \{0\})\cup (\N_0^{k-1}\times\N)$
  we get}
  \begin{align*}
    &\sum_{k\in\N} \sum_{\bsnu\in\N_0^k}\gamma(\bsvarrho,\bsnu)^p
      \prod_{j=1}^k(1+2\nu_j)^\tau\\
    &\qquad\qquad=\sum_{k\in\N}\varrho_k^{-p}\sum_{\bsnu\in\N_0^{k-1}\times\{0\}}
      \bsvarrho_{[k-1]}^{-p\bsnu_{[k-1]}}\prod_{j=1}^{k-1}(1+2\nu_j)^\tau
      +\sum_{k\in\N}\sum_{\bsnu\in\N_0^{k-1}\times\N}
      \bsvarrho_{[k]}^{-p\bsnu}\prod_{j=1}^k(1+2\nu_j)^\tau\\
    &\qquad\qquad\le
      \sum_{k\in\N}\varrho_k^{-p}C_0+ C_0<\infty,
  \end{align*}
  since $\sum_{k\in\N}\varrho_k^{-p}<\infty$.
\end{proof}

\begin{lemma}\label{lemma:Sxi}
  Let $\tau>0$ and let $\bsvarrho\in (1,\infty)^\N$ be such that
  $(\varrho_j^{-1})\in\ell^p(\N)$ for some $p\in (0,1]$ and
  additionally $\sum_{j\in\N}\varrho_j^{-1}<1$. Then with
  $\gamma(\bsvarrho,\bsnu)$ as in \eqref{eq:gamma}
  \begin{equation*}
    \sum_{k\in\N} \sum_{\bsnu\in\N_0^k}\left(\frac{|\bsnu|^{|\bsnu|}}{\bsnu^{\bsnu}}\gamma(\bsvarrho,\bsnu)\right)^p\prod_{j=1}^k(1+2\nu_j)^\tau<\infty.
  \end{equation*}
\end{lemma}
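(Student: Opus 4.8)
The plan is to follow the scheme of the proof of Lemma~\ref{lemma:Sxi0}: reduce the double sum to a single sum over all finitely supported multi-indices, and then bound that sum by an elementary estimate for $|\bsnu|^{|\bsnu|}/\bsnu^{\bsnu}$ together with a splitting of the coordinates into a finite head block and an infinite tail block.

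First I would mimic the ``$\nu_k=0$ / $\nu_k\ge 1$'' decomposition from the proof of Lemma~\ref{lemma:Sxi0}. Writing $\gamma(\bsvarrho,\bsnu)=\varrho_k^{-1}\bsvarrho_{[k-1]}^{-\bsnu_{[k-1]}}$ when $\nu_k=0$ and $\gamma(\bsvarrho,\bsnu)=\bsvarrho_{[k]}^{-\bsnu}$ when $\nu_k\ge 1$, and noting that $|\bsnu|^{|\bsnu|}/\bsnu^{\bsnu}$ is unchanged when a trailing zero is adjoined to $\bsnu$, the partition $\N_0^k=(\N_0^{k-1}\times\{0\})\cup(\N_0^{k-1}\times\N)$ bounds the ``$\nu_k=0$'' part of the double sum by $\big(\sum_{k\in\N}\varrho_k^{-p}\big)C_0$ and identifies the ``$\nu_k\ge 1$'' part, via the bijection between $\{(k,\bsnu):k\in\N,\ \bsnu\in\N_0^{k-1}\times\N\}$ and $\{\bsnu\in\N_0^\N:0<|\bsnu|<\infty\}$, with a subsum of
\[
C_0:=\sum_{\bsnu\in\N_0^\N,\ |\bsnu|<\infty}\Big(\tfrac{|\bsnu|^{|\bsnu|}}{\bsnu^{\bsnu}}\,\bsvarrho^{-\bsnu}\Big)^{p}\prod_{j\in\N}(1+2\nu_j)^{\tau}.
\]
Since $(\varrho_j^{-1})\in\ell^p$ gives $\sum_k\varrho_k^{-p}<\infty$, it suffices to prove $C_0<\infty$.

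For the bound on $C_0$ the basic combinatorial input is the weighted AM--GM inequality: for any $\bsd\in[0,\infty)^\N$,
\[
\frac{|\bsnu|^{|\bsnu|}}{\bsnu^{\bsnu}}\bsd^{\bsnu}=\prod_{j\in\supp\bsnu}\Big(\frac{|\bsnu|\,d_j}{\nu_j}\Big)^{\nu_j}\le\Big(\sum_j d_j\Big)^{|\bsnu|}
\]
(take $|\bsnu|$-th roots and use the weights $\nu_j/|\bsnu|$), supplemented by $|\bsnu|^{|\bsnu|}/\bsnu^{\bsnu}\le e^{|\bsnu|}\,|\bsnu|!/\bsnu!$; this is the infinite-parametric analogue of the estimate used in \cite[Thm.~7.2]{CDS10}. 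I would then fix, using $(\varrho_j^{-1})\in\ell^p$, a cut-off $J$ so large that $\sum_{j>J}\varrho_j^{-p}$ (and the analogous perturbed tails) are small; split each finitely supported $\bsnu$ as $\bsnu=(\bsmu,\bseta)$ with $\supp\bsmu\subseteq\{1,\dots,J\}$ and $\supp\bseta\subseteq\{J+1,\dots\}$; bound the head factor by the AM--GM estimate above with the $J$-term sum $\sum_{j\le J}\varrho_j^{-1}\le\sum_j\varrho_j^{-1}<1$, reducing the head sum to a finite product of convergent series $\sum_m C^m(1+2m)^\tau$ with $C<1$ (using only that there are polynomially many $\bsmu$ with $|\bsmu|=a$); bound the tail factor by $e^{|\bseta|}\,|\bseta|!/\bseta!\cdot\bsvarrho_{(J)}^{-\bseta}$ so as to retain the full $\ell^p$ decay, and estimate $\sum_\bseta(\cdots)$ by the multinomial theorem down to $\sum_b\big(e^{2\tau+p}\sum_{j>J}\varrho_j^{-p}\big)^b$, finite for $J$ large. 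The cross factor $(|\bsmu|+|\bseta|)^{|\bsmu|+|\bseta|}/(|\bsmu|^{|\bsmu|}|\bseta|^{|\bseta|})$ produced by the split is controlled by $\le e^{|\bsmu|+|\bseta|}\binom{|\bsmu|+|\bseta|}{|\bsmu|}$ and distributed between the two blocks via $\binom{a+b}{a}\le(1+s)^a(1+s^{-1})^b$ (with $a=|\bsmu|$, $b=|\bseta|$, $s>0$ free).

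The hard part will be the bookkeeping in this last step: both hypotheses must be used at once --- the $\ell^1$-smallness $\sum_j\varrho_j^{-1}<1$, which controls the ``entropy factor'' $|\bsnu|^{|\bsnu|}/\bsnu^{\bsnu}$ through the weighted AM--GM bound, and the $\ell^p$-summability, which makes the sum over the infinitely many active coordinates converge --- and the exponential and binomial cross-terms from the head/tail decomposition must be routed onto the correct block, since the finitely many head coordinates can absorb any fixed exponential constant while the tail block cannot afford to give up any of its $\ell^p$ decay. As in Lemma~\ref{lemma:Sxi0}, the constructions of Lemma~\ref{lemma:kappajinfty} are not needed; the argument is a quantitative refinement of the one already given there, essentially \cite[Thm.~7.2]{CDS10} with the extra weight $\prod_j(1+2\nu_j)^\tau$ and $|\bsnu|^{|\bsnu|}/\bsnu^{\bsnu}$ in place of $|\bsnu|!/\bsnu!$.
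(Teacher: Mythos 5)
Your reduction is exactly the paper's: the partition $\N_0^k = (\N_0^{k-1}\times\{0\})\cup(\N_0^{k-1}\times\N)$, the observation that the ``$\nu_k=0$'' block contributes at most $\bigl(\sum_k\varrho_k^{-p}\bigr)C_0$ and the ``$\nu_k\ge 1$'' block is a subsum of $C_0$, with
\[
C_0=\sum_{\substack{\bsnu\in\N_0^\N\\ |\bsnu|<\infty}}\left(\frac{|\bsnu|^{|\bsnu|}}{\bsnu^{\bsnu}}\bsvarrho^{-\bsnu}\right)^{p}\prod_{j\in\N}(1+2\nu_j)^{\tau}.
\]
Where you and the paper part ways is only in how $C_0<\infty$ is established: the paper simply cites \cite[Lemma~3.11]{ZS17}, which is precisely the weighted, $|\bsnu|^{|\bsnu|}/\bsnu^{\bsnu}$-version of \cite[Thm.~7.2]{CDS10}, while you sketch a self-contained argument (weighted AM--GM for the head block using $\sum_j\varrho_j^{-1}<1$, the Stirling bound $|\bsnu|^{|\bsnu|}/\bsnu^{\bsnu}\le e^{|\bsnu|}|\bsnu|!/\bsnu!$ for the tail, $(1+2n)^\tau\le 3^{\tau n}$ to absorb the polynomial weight into the geometric decay, a head/tail split at a cutoff $J$ with the cross-factor $\binom{a+b}{a}\le(1+s)^a(1+s^{-1})^b$, and $J$ chosen so that $\sum_{j>J}\varrho_j^{-p}$ beats the fixed exponential constants). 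The ideas are all correct and would fill in to a proof, so the gain is self-containedness; the cost is the nontrivial bookkeeping you yourself flag, which the paper sidesteps by the citation. If you do write it out, note that the tail estimate uses $(b!/\bseta!)^p\le b!/\bseta!$ together with the multinomial theorem applied to $\bsvarrho_{(J)}^{-p\bseta}$ to produce the $\sum_{j>J}\varrho_j^{-p}$ factor (not the $\ell^1$ tail), which is what makes ``$J$ large'' suffice.
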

\begin{proof}
  By \cite[Lemma 3.11]{ZS17},
  the assumptions on $\bsvarrho$ {imply with $w_\bsnu=\prod_j(1+2\nu_j)^\tau$}
  \begin{equation*}
    1+
    \sum_{k\in\N}\sum_{\bsnu\in\N_0^{k-1}\times\N}\left(\frac{|\bsnu|^{|\bsnu|}}{\bsnu^{\bsnu}}\bsvarrho_{[k]}^{-\bsnu}\right)^p{w_\bsnu}
    =
    \sum_{\set{\bsnu\in\N_0^\N}{|\bsnu|<\infty}}
    \left(\frac{|\bsnu|^{|\bsnu|}}{\bsnu^{\bsnu}}\bsvarrho^{-\bsnu}\right)^p
    {w_\bsnu}
    \dfnn C_0<\infty.
  \end{equation*}
  {Hence, similar as in the proof of Lemma \ref{lemma:Sxi0}
  \begin{align*}
    &\sum_{k\in\N} \sum_{\bsnu\in\N_0^k}\left(\frac{|\bsnu|^{|\bsnu|}}{\bsnu^{\bsnu}}\gamma(\bsvarrho,\bsnu)\right)^pw_\bsnu\\
    &\qquad=\sum_{k\in\N}\varrho_k^{-p} \sum_{\bsnu\in\N_0^{k-1}\times\{0\}}      
      \left(\frac{|\bsnu|^{|\bsnu|}}{\bsnu^\bsnu}\bsvarrho_{[k-1]}^{-\bsnu_{[k-1]}}\right)^pw_\bsnu
      +\sum_{k\in\N}\sum_{\bsnu\in\N_0^{k-1}\times\N}
      \left(\frac{|\bsnu|^{|\bsnu|}}{\bsnu^\bsnu}\bsvarrho_{[k]}^{-\bsnu}\right)^p
      w_\bsnu\\
    &\qquad\le\sum_{k\in\N}\varrho_k^{-p}C_0+C_0.\qedhere
  \end{align*}}
\end{proof}

{
  In the following we denote by $L_n:\U{1}\to\R$ for $n\in\N_0$
  the $n$-th Legendre polynomial normalized in $L^2(\U{1},\mu)$.
  Then $(L_n)_{n\in\N_0}$ forms an orthonormal basis of this space.
  More generally, setting $L_\bsnu(\bsx)\dfn \prod_{j=1}^kL_{\nu_j}(x_j)$
  with $\bsnu\in\N_0^k$ for $\bsx\in \U{k}$, the family
  $(L_\bsnu)_{\bsnu\in\N_0^k}$ forms an orthonormal basis of $L^2(\U{k},\mu)$,
  and any function $f$ in this space allows the representation
  $f(\bsx)=\sum_{\bsnu\in\N_0^k}L_\bsnu(\bsx)l_{\bsnu}$
  with the coefficients $l_\bsnu=\int_{\U{k}}$.
  We have \cite[\S 18.2(iii) and \S
  18.3]{nist}
  \begin{equation}\label{eq:Lkbound}
    \norm[{L^{\infty}(\U{k})}]{L_\bsnu}\le \prod_{j=1}^k
    (1+2\nu_j)^{\frac{1}{2}}.
  \end{equation}

  To prove Thm.~\ref{THM:TINF} we will bound the Legendre coefficients
  of $\sqrt{\partial_kT_k}-1$. To this end we will use the next lemma,
  which we have also used in the analysis of the finite dimensional
  case; see \cite[Lemma 4.1]{zm1}. For a proof in the one
  dimensional case we refer to Chapter 12 in \cite{davis}; see the
  calculation in equations (12.4.24)--(12.4.26). The multidimensional
  case follows by applying the result in each variable separately,
  e.g., \cite{chkifa} or \cite[Cor.~B.2.7]{JZdiss}.

\begin{lemma}\label{lemma:legest}
  Let $\bszeta\in (0,\infty)^k$.
  Let $f:\cB_\bszeta(\U{1})\to \cB_{r_1}$
  be differentiable. Then %
  \begin{enumerate}
  \item\label{item:legest:1} for %
    all $\bsnu\in\N_0^k$
  \begin{equation}\label{eq:lknubound}
    \left|\int_{\U{d}} f(\bsy) L_\bsnu(\bsy)\dd\mu(\bsy) \right|
    \le 
    r_1
    \prod_{j\in\supp\bsnu}\left(\frac{2(\zeta_j+1)}{\zeta_j}(1+2\nu_j)^{3/2}\right)
    \prod_{j=1}^k(1+\zeta_j)^{-\nu_j},
  \end{equation}
\item\label{item:legest:2}
  if $f:\cB_{\bszeta_{[k-1]}}(\U{1})\times [-1,1]\to \cB_{r_2}$
  then for all $\bsnu\in\N_0^{k-1}\times\{0\}$
  \begin{equation}\label{eq:lknubound2}
    \left|\int_{\U{d}} f(\bsy) L_\bsnu(\bsy)\dd\mu(\bsy) \right|
    \le 
    r_2
    \prod_{j\in\supp\bsnu}\left(\frac{2(\zeta_j+1)}{\zeta_j}(1+2\nu_j)^{3/2}\right)
    \prod_{j=1}^{k}(1+\zeta_j)^{-\nu_j}.
  \end{equation}  
  \end{enumerate}
\end{lemma}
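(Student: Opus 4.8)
The plan is to reduce everything to the one–dimensional case $k=1$ and then tensorize over the coordinates. For $k=1$ and $\nu\in\N_0$ we must estimate $l_\nu=\int_{\U{1}}f(y)L_\nu(y)\dd\mu(y)$ for $f$ holomorphic and bounded by $r_1$ on $\cB_\zeta(\U{1})=\cB_\zeta([-1,1])$. The classical route is to write this, via Cauchy's integral formula and the representation $Q_\nu(z)=\tfrac12\int_{-1}^1\frac{P_\nu(t)}{z-t}\dd t$ of the Legendre function of the second kind, as a contour integral of $f$ against $Q_\nu$ over $\partial\cB_\zeta([-1,1])$, and then to use $|f|\le r_1$ on that contour together with the decay $|Q_\nu(z)|\lesssim(1+\zeta)^{-\nu}$ there (which reflects $|z+\sqrt{z^2-1}|\ge 1+\zeta$ on $\partial\cB_\zeta([-1,1])$, since the Bernstein ellipse $E_{1+\zeta}$ lies inside $\cB_\zeta([-1,1])$). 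Collecting the polynomial-in-$\nu$ factors from $|Q_\nu|$ and the contour length (which produces the $\tfrac{\zeta+1}{\zeta}$) yields the bound with the stated constants for $\nu\ge 1$; for $\nu=0$ it is the trivial $|l_0|=|\int_{\U{1}}f\dd\mu|\le r_1$ since $L_0\equiv 1$ and $\supp\bsnu=\emptyset$. I would not reprove this but cite \cite[Ch.~12, eqns.~(12.4.24)--(12.4.26)]{davis}; what matters is that the constant depends only on $\zeta$ and $\nu$ in the explicit product form, and that the estimate holds verbatim for complex-valued $f$.

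For general $k$ and part (i), I would induct on $k$, integrating out one variable at a time. Write $a_j\dfn \frac{2(\zeta_j+1)}{\zeta_j}(1+2\nu_j)^{3/2}(1+\zeta_j)^{-\nu_j}$ if $\nu_j\ge 1$ and $a_j\dfn 1$ if $\nu_j=0$, so that the asserted bound in (i) is exactly $|l_\bsnu|\le r_1\prod_{j=1}^k a_j$. Given $f:\cB_\bszeta(\U{1})\to\cB_{r_1}$, set
\[
  g_1(\bsy_{[2:k]})\dfn \int_{\U{1}}f(y_1,\bsy_{[2:k]})L_{\nu_1}(y_1)\dd\mu(y_1).
\]
For fixed real $\bsy_{[2:k]}\in\U{k-1}$ the one-dimensional estimate applied to $z_1\mapsto f(z_1,\bsy_{[2:k]})$ gives $|g_1(\bsy_{[2:k]})|\le r_1 a_1$; moreover $g_1$ extends holomorphically to $\bsy_{[2:k]}\in\bigtimes_{j=2}^k\cB_{\zeta_j}([-1,1])$, by differentiation under the integral sign in each $y_j$, $j\ge 2$ (the parameter-integral theorem \cite{MR1823156} used in the proof of Prop.~\ref{PROP:COR:DININF}, valid since $f$ is continuous by Hartogs' theorem \cite[Thm.~1.2.5]{krantz}) followed by Hartogs again for joint holomorphy, and the bound $|g_1|\le r_1 a_1$ persists on this complex domain by the same one-dimensional estimate. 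Iterating in $y_2,\dots,y_k$ yields $|l_\bsnu|\le r_1\prod_{j=1}^k a_j$, which is (i) after noting $\prod_j a_j=\prod_{j\in\supp\bsnu}\big(\tfrac{2(\zeta_j+1)}{\zeta_j}(1+2\nu_j)^{3/2}\big)\prod_{j=1}^k(1+\zeta_j)^{-\nu_j}$.

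For part (ii) we have $\nu_k=0$, hence $L_\bsnu(\bsy)=\prod_{j=1}^{k-1}L_{\nu_j}(y_j)$ (using $L_0\equiv1$), so that
\[
  \int_{\U{k}}f(\bsy)L_\bsnu(\bsy)\dd\mu(\bsy)=\int_{\U{k-1}}\tilde f(\bsy_{[k-1]})\prod_{j=1}^{k-1}L_{\nu_j}(y_j)\dd\mu(\bsy_{[k-1]}),\qquad \tilde f(\bsy_{[k-1]})\dfn\int_{\U{1}}f(\bsy_{[k-1]},y_k)\dd\mu(y_k).
\]
As above, $\tilde f$ is holomorphic on $\cB_{\bszeta_{[k-1]}}(\U{1})$ and $|\tilde f(\bsy_{[k-1]})|\le\sup_{y_k\in[-1,1]}|f(\bsy_{[k-1]},y_k)|\le r_2$ there, so part (i) in dimension $k-1$ applied to $\tilde f$ with multiindex $\bsnu_{[k-1]}$ gives precisely (ii), since $\nu_k=0$ makes the $j=k$ factor trivial and $\supp\bsnu=\supp\bsnu_{[k-1]}$. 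The only substantive ingredient is the one-dimensional estimate of \cite{davis}; beyond that the argument is routine, the sole mild care being the preservation of holomorphy and of the sup-bound under successive partial integrations, which rests on tools already invoked in the paper, so I anticipate no genuine obstacle.
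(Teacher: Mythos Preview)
Your proposal is correct and follows exactly the approach indicated in the paper: the one-dimensional estimate via the contour-integral argument in \cite[(12.4.24)--(12.4.26)]{davis}, followed by tensorization (applying the result in each variable separately, as in \cite{chkifa} or \cite[Cor.~B.2.7]{JZdiss}). The paper does not give more detail than this, so your write-up in fact provides a more explicit version of the same argument, including the clean reduction of (ii) to (i) in dimension $k-1$ after integrating out $y_k$.
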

}

\begin{proof}[Proof of Thm.~\ref{THM:TINF}]
  We first define some constants used throughout the proof. Afterwards the
  proof proceeds in 5 steps.
  
  Let ${M}\le|\scr{f}_\measi(\psi)|,|\scr{f}_\measii(\psi)| \le {L}$
  as stated in Assumption \ref{ass:densities}. Let ${C_1}$, ${C_2}$,
  ${C_3}>0$ be the constants from Prop.~\ref{PROP:COR:DININF}
  {depending on $M$ and $L$}. Let
  $(\kappa_j)_{j\in\N}\subset (0,\infty)$ (monotonically increasing) and
  $\tau>0$ be as in Lemma \ref{LEMMA:SCRFHOL} (depending on
  $(b_j)_{j\in\N}$, ${C_1}$ and $\scr{f}_\measi$, $\scr{f}_\measii$).
  Then $\kappa_{\rm min}\dfn \min_{j\in\N}\kappa_j>0$. Fix $J\in\N$ so large and
  $\alpha>0$ so small that
  \begin{equation}\label{eq:choiceJ}
    \sum_{j\ge J} \left(\frac{b_j}{{C_2}\tau}\right)^{p}<1
    \quad\qquad\text{and}\qquad\quad
    1+\frac{\alpha}{b_j} <
    \begin{cases}
      1+{C_2}\kappa_{\rm min} &j<J\\
      \frac{{C_2}\tau}{b_j} &j\ge J
    \end{cases}\qquad\forall j\in\N.
  \end{equation}
  This is possible because $\bsb\in\ell^p(\N)$, since
  $b_j=\max\{\norm[Z_\measi]{\psi_{j,\measi}},\norm[Z_\measii]{\psi_{j,\measii}}\}$
  (cp.~Assumption \ref{ass:densities}). Then by Lemma
  \ref{LEMMA:SCRFHOL},
  $f_\measi(\bsy)=\scr{f}_\measi(\sum_{j\in\N}y_j\psi_{\measi,j})$ and
  $f_\measii(\bsy)=\scr{f}_\measii(\sum_{j\in\N}y_j\psi_{\measii,j})$
  satisfy %
  {Assumption \ref{ass:infdens}} with $(\delta_j)_{j\in\N}$ as in
  \eqref{eq:rhonu} (and with our above choice of $J\in\N$).
  
  {\bf Step 1.} We provide bounds on the Legendre coefficient
  \begin{equation}\label{eq:legrksqrt1}
    l_{k,\bsnu}\dfn\int_{\U{k}}(\sqrt{R_k(\bsx)}-1)L_\bsnu(\bsx)\dd\mu( \bsx)
  \end{equation}
  with $R_k=\partial_kT_k$ and $\bsnu\in\N_0^k$ for $k\in\N$.

  Fix $k\in\N$ and $\bsnul\neq \bsnu\in\N_0^k$,
  {and let $\delta_j=\delta_j(J,\bsnu)$ be as in \eqref{eq:rhonu}}.
  According to Prop.~\ref{PROP:COR:DININF} (applied with $j=k$)
  \begin{enumerate}
  \item\label{item:Rkzetak}
    $R_k\in C^1(\cB_{\bszeta_{[k]}}(\U{1});\cB_{{C_3}}(1))$ and
    $\Re(R_k(\bsx))\ge \frac{1}{{C_3}}$ for all
    $\bsx\in \cB_{\bszeta_{[k]}}(\U{1})$,
  \item\label{item:Rkzetak-1} if $k\ge 2$,
    $R_k:\cB_{\bszeta_{[k-1]}}(\U{1})\times \U{1}\to
    \cB_{\frac{{C_3}}{\delta_k}}(1)$,
  \end{enumerate}
  where $\zeta_j={C_2} \delta_j$, $j\in\{1,\dots,k\}$, and the
  constants ${C_2}$ and ${C_3}$ solely depend on ${M}$ and ${L}$ but
  not on $k$ or $\bsnu$. %
  In particular for
  \begin{equation}\label{eq:Rktilde}
    Q_k\dfn \sqrt{R_k}-1=\sqrt{\partial_kT_k}-1
  \end{equation}
  we get with ${C_4}\dfn \sqrt{1+{C_3}}+1$
  \begin{equation}\label{eq:claimtildeRk0}
    Q_k:\cB_{\bszeta_{[k]}}(\U{1})\to\cB_{{C_4}},
  \end{equation}
  which follows by \ref{item:Rkzetak} and
  $|\sqrt{R_k(\bsx)}-1|\le |\sqrt{R_k(\bsx)}|+1\le \sqrt{{C_3}+1}+1$ for all $\bsx\in\cB_{\bszeta_{[k]}}(\U{1})$. We claim
  that with $r\dfn 2{C_3}{C_4}\ge {C_3}$ if $k\ge 2$
  \begin{equation}\label{eq:claimtildeRk}
    Q_k:\cB_{\bszeta_{[k-1]}}(\U{1})\times \U{1}\to
    \cB_{\frac{r}{\delta_k}}.
  \end{equation}
  To show it fix $\bsx\in \cB_{\bszeta_{[k-1]}}(\U{1})\times \U{1}$.
  We distinguish between $\frac{{C_3}}{\delta_k}\le \frac{1}{2}$ and
  $\frac{{C_3}}{\delta_k}> \frac{1}{2}$.
  For any $q\in\C$ with $|q|\le \frac{1}{2}$ we have with
  $g(q)\dfn \sqrt{1+q}-1$ that $g(0)=0$ and $|g'(q)|\le \sqrt{\frac{1}{2}}$. Thus
  $|\sqrt{1+q}-1|\le |q|$ for all $|q|\le \frac{1}{2}$. Therefore if
  $\frac{{C_3}}{\delta_k}\le \frac{1}{2}$ then by \ref{item:Rkzetak-1}
  $|R_k(\bsx)-1|\le {\frac{C_3}{\delta_k}\le}\frac{1}{2}$ and thus
  \begin{equation*}
    |Q_k(\bsx)| =|\sqrt{(R_k(\bsx)-1)+1}-1|\le |R_k(\bsx)-1|
    \le \frac{{C_3}}{\delta_k}
    \le \frac{r}{\delta_k}.
  \end{equation*}
  For the second case $\frac{{C_3}}{\delta_k}>\frac{1}{2}$, by
  \ref{item:Rkzetak-1} we have
  $|\sqrt{R_k(\bsx)}-1|\le 1+|\sqrt{R_k(\bsx)}|\le 1+\sqrt{{C_3}+1}={C_4}$. Since $\frac{{C_3}}{\delta_k}>\frac{1}{2}$ and thus
  $\delta_k\le 2{C_3}$, we can bound ${C_4}$ by
  ${C_4}=\frac{r}{2{C_3}} \le \frac{r}{\delta_k}$, which concludes the
  proof of \eqref{eq:claimtildeRk}.

  The fact that $R_k$ has non-negative real part implies that its
  composition with the square root, i.e., the map
  $\bsx\mapsto \sqrt{R_k(\bsx)}$, is well-defined and differentiable
  on $\cB_{\bszeta_{[k]}}(\U{1})$.  With
  $\kappa_{\rm min}=\min_{j\in\N}\kappa_j>0$ set
  $\zeta_{\rm min}\dfn C_2 \kappa_{\rm min}>0$ and observe that
  $\zeta_j=C_2\delta_j \ge\zeta_{\rm min}$ for all $j\in\N$
  (cp.~\eqref{eq:rhonu}).  {Let
  \begin{equation}\label{eq:wbsnu}
    w_\bsnu=\prod_{j=1}^k(1+2\nu_j)^{\theta},
  \end{equation}
  with
  $\theta=\frac{3}{2}+\log_3(\frac{2(1+\zeta_{\rm min})}{\zeta_{\rm min}})$.
  Then
  \begin{equation*}
    \prod_{j\in\supp\bsnu} \frac{2(\zeta_j+1)}{\zeta_j}(1+2\nu_j)^{3/2}
    =
    \prod_{j\in\supp\bsnu} 3^{\log_3(\frac{2(\zeta_j+1)}{\zeta_j})}
    (1+2\nu_j)^{3/2}
    \le \prod_{j\in\supp\bsnu} (1+2\nu_j)^{\theta}=w_\bsnu.
  \end{equation*}
  }

  {With Lemma \ref{lemma:legest} \ref{item:legest:1} and
  \eqref{eq:claimtildeRk0} we obtain for the Legendre coefficients of}
  $Q_k$ in \eqref{eq:legrksqrt1}
  \begin{equation}\label{eq:lkbsnu1}
    |l_{k,\bsnu}|\le 
    w_\bsnu C_4
    \prod_{j=1}^k(1+\zeta_j)^{-\nu_j}
    \qquad\forall\bsnu\in\N_0^k.
  \end{equation}
  Moreover with Lemma \ref{lemma:legest} \ref{item:legest:2}
  and
  \eqref{eq:claimtildeRk}
  \begin{equation}\label{eq:lkbsnu2}
    |l_{k,\bsnu}|\le w_\bsnu
    \frac{r}{\delta_k} \prod_{j=1}^{k}(1+\zeta_j)^{-\nu_j}
    \qquad\forall \bsnu\in\N_0^{k-1}\times\{0\}.
  \end{equation}

  {\bf Step 2.} We provide a bound on $|l_{k,\bsnu}|$
  in terms of $\gamma(\tilde \bsvarrho,\bsnu)$ for some
  $\tilde\bsvarrho$.
  
  Fix again $k\in\N$ and $\bsnu\in\N_0^k$. For $j\in\N$ by definition
  of $\zeta_j={C_2}\delta_j$ and $\delta_j=\delta_j(J,\bsnu)$ in
  \eqref{eq:rhonu} %
  \begin{equation*}
    \zeta_j = {C_2}\delta_j =
    {C_2}
    \begin{cases}
      \kappa_j+0 &j\neq k,~j< J\\
      \kappa_j+\frac{\tau\nu_j}{(\sum_{i=J}^{k-1}\nu_i)b_j} %
      &j\neq k,~j\ge J\\
      \kappa_k+\frac{\tau}{b_k} &j=k.
    \end{cases}
  \end{equation*}
  Since ${C_2}\in (0,1]$ (see Prop.~\ref{PROP:COR:DININF}) and
  {$\kappa_{\rm min}\le\kappa_j$}, it holds with
  $|\bsnu_{[J:k]}|=\sum_{j=J}^k\nu_j\ge \sum_{j=J}^{k-1}\nu_j$
  \begin{equation*}
    \zeta_j\ge
\begin{cases}
      {C_2}\kappa_{\rm min} &j<J\\
      \frac{{C_2} \tau \nu_j}{|\bsnu_{[J:k]}|b_j} &j\ge J
    \end{cases}\qquad\forall j\in\{1,\dots,k\}
  \end{equation*}
  and additionally
  \begin{equation*}
    \frac{r}{\delta_k}=\frac{r}{\kappa_k+\tau/b_k}\le \frac{b_k
      r}{\tau}.
  \end{equation*}
  Thus by \eqref{eq:lkbsnu1} and \eqref{eq:lkbsnu2}
  for $k\in\N$ and $\bsnu\in\N_0^k$
  \begin{equation}\label{eq:lkbsnularge}
    |l_{k,\bsnu}|\le {C_4} w_\bsnu
    \prod_{j=1}^{J-1}
    (1+{C_2}\kappa_{\rm min})^{-\nu_j}
    \prod_{i=J}^{k}\frac{|\bsnu_{[J:k]}|^{\nu_i}}{\nu_i^{\nu_i}}
    \prod_{i=J}^{k} \left(\frac{b_i}{{C_2}\tau}\right)^{\nu_i}\cdot
    \begin{cases}
      1 &k\in\supp\bsnu\\
      \frac{b_k r}{\tau}&k\notin\supp\bsnu
    \end{cases}
  \end{equation}
  with empty products equal to $1$ by convention.

  Defining
  \begin{equation}\label{eq:txi}
    \tilde\varrho_j\dfn     \begin{cases}
      1+{C_2}\kappa_{\rm min} &j<J\\
      \frac{{C_2}\tau}{b_j} &j\ge J
      \end{cases}\qquad\forall j\in\N.
    \end{equation}
    the bound \eqref{eq:lkbsnularge} becomes with
    $\gamma(\tilde\bsvarrho,\bsnu)
    =\tilde\varrho_k^{-\max\{1,\nu_k\}}\prod_{j=1}^{k-1} \tilde
    \varrho_j^{-\nu_j}$
    \begin{align}\label{eq:lkbsnufinal}
      |l_{k,\bsnu}|&\le {C_4} w_\bsnu
                     \prod_{i=J}^{k}\frac{|\bsnu_{[J:k]}|^{\nu_i}}{\nu_i^{\nu_i}}
                     \prod_{j\in\supp\bsnu}\tilde\varrho_j^{-\nu_j}\cdot
      \begin{cases}
        1 &k\in\supp\bsnu\\
        \frac{b_kr}{\tau}&k\notin\supp\bsnu
      \end{cases}\nonumber\\
                   &={C_4} w_\bsnu
                     \prod_{i=J}^{k}\frac{|\bsnu_{[J:k]}|^{\nu_i}}{\nu_i^{\nu_i}}
      \gamma(\tilde\bsvarrho,\bsnu)
        \cdot\begin{cases}
          1&k\in\supp\bsnu\\
          \tilde\varrho_k\frac{b_k r}{\tau} &k\notin\supp\bsnu
        \end{cases}\nonumber\\
      &\le {C_5}w_\bsnu\gamma(\tilde\bsvarrho,\bsnu)\prod_{i=J}^{k}\frac{|\bsnu_{[J:k]}|^{\nu_j}}{\nu_i^{\nu_i}}
    \end{align}
    with
    \begin{equation*}
    {C_5}\dfn
    {C_4}\sup_{k\in\N}\tilde\varrho_k\frac{b_kr}{\tau}<\infty
    \end{equation*}
    which is finite by
    definition of $\tilde\varrho_j$ in \eqref{eq:txi}.
  
    With $\alpha$ in \eqref{eq:choiceJ} introduce
  \begin{equation}\label{eq:xi}
    \varrho_j\dfn 1+\frac{\alpha}{b_j} <
    \tilde \varrho_j\qquad\forall j\in\N.
    \end{equation}
    Then 
\begin{equation}\label{eq:tildebsxi}
  \gamma(\tilde\bsvarrho,\bsnu)\le\gamma(\bsvarrho,\bsnu).
\end{equation}

  {\bf Step 3.} We show a summability result for the
  Legendre coefficients.

    For notational convenience we introduce the shortcuts
  \begin{equation*}
    \bsnu_E\dfn \bsnu_{[J-1]},\quad
    \bsnu_F\dfn \bsnu_{[J:k]},\quad
    \tilde\bsvarrho_E\dfn \tilde\bsvarrho_{[J-1]},\quad
    \tilde\bsvarrho_F\dfn \tilde\bsvarrho_{[J:k]}.    
  \end{equation*}
  Hence $\tilde\bsvarrho_E^{-\bsnu_E}=\prod_{j=1}^{J-1}\varrho_j^{-\nu_j}$,
  $\bsnu_F^{\bsnu_F}=\prod_{j\ge J}\nu_j^{\nu_j}$,
  $\gamma(\bsvarrho_F,\bsnu_F)=\varrho_k^{-\max\{1,\nu_k\}}\prod_{j=J}^{k-1}\varrho_j^{-\nu_j}$
  in case $k\ge J$ etc. For $k\ge J$
  and $\bsnu\in\N_0^k$
  \begin{equation*}
    \gamma(\tilde\bsvarrho,\bsnu) = \tilde\bsvarrho_{[k]}^{-\bsnu}\cdot
    \begin{cases}
      \tilde\varrho_k^{-1} &k\notin\supp\bsnu\\
      1 &k\in\supp\bsnu
      \end{cases}
      = \tilde\bsvarrho_E^{-\bsnu_E}\tilde\bsvarrho_F^{-\bsnu_F}\cdot
    \begin{cases}
      \tilde\varrho_k^{-1} &k\notin\supp\bsnu\\
      1 &k\in\supp\bsnu
    \end{cases}
    = \tilde\bsvarrho_E^{-\bsnu_E}\gamma(\tilde\bsvarrho_F,\bsnu_F).
  \end{equation*}
  By \eqref{eq:tildebsxi} and because $p<1$ it holds
  $\gamma(\bsvarrho,\bsnu)^{p-1}\le \gamma(\tilde \bsvarrho,\bsnu)^{p-1}$.
  Thus by \eqref{eq:lkbsnufinal} and \eqref{eq:xi}
  \begin{align}\label{eq:jointsummability}
    &\sum_{k\in\N}\sum_{\bsnu\in\N_0^k}w_\bsnu
      |l_{k,\bsnu}|\gamma(\bsvarrho,\bsnu)^{p-1}
      \le{C_5}
      \sum_{k\in\N}\sum_{\bsnu\in\N_0^k}w_\bsnu^2
      \gamma(\tilde\bsvarrho_{[k]},\bsnu)
      \frac{|\bsnu_F|^{|\bsnu_F|}}{\bsnu_F^{\bsnu_F}}
      \gamma(\bsvarrho,\bsnu)^{p-1}
      \nonumber\\
    &\quad
      \le{C_5}\sum_{k=1}^{J-1}\sum_{\bsnu\in\N_0^k}w_\bsnu^2
      \gamma(\tilde\bsvarrho_{[k]},\bsnu)^p
      +{C_5}\sum_{k\ge J}\sum_{\bsnu\in\N_0^k}w_\bsnu^2      
      \gamma(\tilde\bsvarrho_{[k]},\bsnu)^p
      \frac{|\bsnu_F|^{|\bsnu_F|}}{\bsnu_F^{\bsnu_F}}      
      \nonumber\\
    &\quad\le {C_5} \sum_{k=1}^{J-1}\sum_{\bsnu\in\N_0^k}
      w_\bsnu^2 \gamma(\tilde\bsvarrho_{[k]},\bsnu)^p
      +{C_5} \sum_{k\ge J}\sum_{\bsnu\in\N_0^k}w_\bsnu^2
      \tilde\bsvarrho_E^{-p \bsnu_E}
      \gamma(\tilde\bsvarrho_F,\bsnu_F)^p
      \frac{|\bsnu_{F}|^{|\bsnu_{F}|}}{\bsnu_{F}^{\bsnu_{F}}}.
  \end{align}
  By Lemma \ref{lemma:Sxi0} (here we use that
  $\sup_{j\in\{1,\dots,J-1\}}\tilde\varrho_j^{-1}<1$, see
  \eqref{eq:choiceJ} and \eqref{eq:txi}), the first sum is bounded.
  For the second sum in \eqref{eq:jointsummability}
  \begin{equation}\label{eq:newbound}
    \sum_{k\ge J}\sum_{\bsnu\in\N_0^k}w_\bsnu
    \tilde\bsvarrho_E^{-p \bsnu_E}
    \gamma(\tilde\bsvarrho_F,\bsnu_F)^p
    \frac{|\bsnu_F|^{|\bsnu_F|}}{\bsnu_F^{\bsnu_F}}      
    = \sum_{k\ge J}\left(\sum_{\bsnu\in\N_0^{J-1}}w_{\bsnu}
      \tilde\bsvarrho_{E}^{-p\bsnu}\right)
    \left(\sum_{\bsmu\in\N_0^{k-J+1}}
      \frac{|\bsmu|^{|\bsmu|}}{\bsmu^\bsmu}
      \gamma(\tilde\bsvarrho_{[J:k]},\bsmu)^p \right).
  \end{equation}
  E.g., by \cite[Lemma 3.10]{ZS17} (again due to
  $\sup_{j\in\{1,\dots,J-1\}}\tilde\varrho_j^{-1}<1$)
  \begin{equation*}%
    \sum_{\bsnu\in\N_0^{J-1}}w_\bsnu\tilde\bsvarrho_E^{-p\bsnu}
    \dfnn C_0<\infty,
  \end{equation*}
  and thus \eqref{eq:newbound} is bounded by
    \begin{equation}\label{eq:sum1bound}
      C_0 \sum_{k\ge J}\sum_{\bsmu\in\N_0^{k-J+1}}
      \frac{|\bsmu|^{|\bsmu|}}{\bsmu^\bsmu}
      \gamma(\tilde\bsvarrho_{[J:k]},\bsmu)^p
      = C_0 \sum_{k\in\N}\sum_{\bsmu\in\N_0^k}
      \frac{|\bsmu|^{|\bsmu|}}{\bsmu^\bsmu}
      \gamma(\tilde\bsvarrho_{[J:J+k]},\bsmu)^p<\infty
    \end{equation}
    by Lemma \ref{lemma:Sxi} and because
    $\sum_{j\ge J}(\tilde\varrho_j^{p})^{-1}<1$ by \eqref{eq:choiceJ} and
    \eqref{eq:txi}. In all
    \begin{equation}\label{eq:jointsummability2}
      \sum_{k\in\N}\sum_{\bsnu\in\N_0^k}w_\bsnu|l_{k,\bsnu}|\gamma(\bsvarrho,
      \bsnu)^{p-1}\dfnn C_6<\infty.
    \end{equation}

  {\bf Step 4.} %
  As before, by Lemma \ref{lemma:Sxi0} and because
  $\sup_{j\in\N}\varrho_j^{-1}<1$ and $(\varrho_j^{-1})_{j\in\N}\in\ell^p(\N)$
  (cp.~\eqref{eq:xi}),
  \begin{equation*}
    \sum_{k\in\N}\sum_{\bsnu\in\N_0^k} \gamma(\bsvarrho,\bsnu)^{p}\dfnn C_7<\infty.
  \end{equation*}

  For $k\in\N$ and $\eps>0$ set
  \begin{equation*}
    \Lambda_{\eps,k}=\set{\bsnu\in\N_0^k}{\gamma(\bsvarrho,\bsnu)\ge
    \eps}\qquad\text{and}\qquad
    N_\eps\dfn\sum_{k\in\N}|\Lambda_{\eps,k}|.
  \end{equation*}
  Then
  \begin{equation*}
    N_\eps = \sum_{\set{(k,\bsnu)}{\gamma(\bsvarrho,\bsnu)\ge \eps}}\gamma(\bsvarrho,\bsnu)^{p}\gamma(\bsvarrho,\bsnu)^{-p}
    \le \eps^{-p} \sum_{k\in\N}\sum_{\bsnu\in\N_0^k}\gamma(\bsvarrho,\bsnu)^{p}
    = C_7 \eps^{-p}
  \end{equation*}
  and thus
  \begin{equation}\label{eq:nepsest}
    \eps\le \left(\frac{N_\eps}{C_7}\right)^{-\frac{1}{p}} \qquad\forall \eps>0.
  \end{equation}
  
  On the other hand, assuming $\eps>0$ to be so small that $N_\eps>0$,
  by \eqref{eq:jointsummability2}
  \begin{align}\label{eq:sumest}
    \sum_{k\in\N}\sum_{\bsnu\in\N_0^k\backslash \Lambda_{\eps,k}}w_\bsnu |l_{k,\bsnu}|
    &= \sum_{\set{(k,\bsnu)}{\bsnu\in\N_0^k,~\gamma(\bsvarrho,\bsnu)<\eps}}w_\bsnu|l_{k,\bsnu}|\nonumber\\
    &= \sum_{\set{(k,\bsnu)}{\bsnu\in\N_0^k,~\gamma(\bsvarrho,\bsnu)<\eps}}
      w_\bsnu|l_{k,\bsnu}|
      \gamma(\bsvarrho,\bsnu)^{p-1}
      \gamma(\bsvarrho,\bsnu)^{1-p}\nonumber\\
    &\le C_6 \eps^{1-p}\le (C_6C_7^{\frac{1}{p}-1}) N_\eps^{-\frac{1}{p}+1}.
  \end{align}

  {\bf Step 5.} We finish the proof and verify \eqref{eq:Talgebraic}.

  {For $k\in\N$ and $\eps>0$} define
  $p_{\eps,k}\dfn \sum_{\bsnu\in\Lambda_{\eps,k}} l_{k,\bsnu}
  L_\bsnu\in\bbP_{\Lambda_{\eps,k}}$. We have
  $\sqrt{\partial_kT_k}-1=Q_k= \sum_{\bsnu\in\N_0^k}
  l_{k,\bsnu}L_\bsnu$. Since
  $\norm[L^\infty({\U{k}})]{L_\bsnu}\le w_\bsnu$ by
  \eqref{eq:Lkbound} and \eqref{eq:wbsnu}, by
  \eqref{eq:jointsummability2} and because
  $\gamma(\bsvarrho,\bsnu)^{p-1}\ge 1$
  \begin{equation}\label{eq:boundtRk}
    \sup_{k\in\N}\norm[{L^{\infty}(\U{k})}]{Q_k}
    \le \sup_{k\in\N}\sum_{\bsnu\in\N_0^k}w_\bsnu |l_{k,\bsnu}|
    \le \sum_{k\in\N}\sum_{\bsnu\in\N_0^k}w_\bsnu |l_{k,\bsnu}|\le
    C_6<\infty.
  \end{equation}
  Similarly
  \begin{equation}\label{eq:tRk-pkeps}
    \norm[L^{\infty}({\U{k}})]{Q_k-p_{\eps,k}}
    \le\sum_{\bsnu\in\N_0^k\backslash \Lambda_{\eps,k}}w_\bsnu |l_{k,\bsnu}|.
  \end{equation}

  {In \cite[Lemma C.4]{zm1} we showed that there exists $K\in (0,1]$
    and $C_K>0$ (both independent of $k$) such that
  \begin{equation}\label{eq:lemmaptildeRK}
    \norm[L^\infty(\U{k})]{Q_k-p_{\eps,k}}<\frac{K}{1+\norm[L^\infty(\U{k})]{Q_k}}
  \end{equation}
  implies
  \begin{equation}\label{eq:ptildeR1}
    \norm[L^\infty(\U{k})]{T_k-\tilde T_{\eps,k}}\le C_K
    (1+\norm[L^\infty(\U{k})]{Q_k})^{3}
    \norm[L^\infty(\U{k})]{Q_k-p_{\eps,k}}
  \end{equation}
  and
  \begin{equation}\label{eq:ptildeR2}
    \norm[L^\infty(\U{k})]{\partial_{k} T_k-\partial_{k} \tilde T_{\eps,k}}\le C_K
    (1+\norm[L^\infty(\U{k})]{Q_k})^{3}
    \norm[L^\infty(\U{k})]{Q_k-p_{\eps,k}}.
  \end{equation}}
  We
  distinguish between two cases, first assuming
  \begin{equation}\label{eq:asslemmaptildeR}
    \sum_{\bsnu\in\N_0^k\backslash \Lambda_{\eps,k}}w_\bsnu
  |l_{k,\bsnu}|<
  \frac{K}{1+\norm[L^\infty({\U{k}})]{Q_k}}.
\end{equation}
By \eqref{eq:tRk-pkeps} and \eqref{eq:asslemmaptildeR},
\eqref{eq:lemmaptildeRK} holds.  Now, \eqref{eq:boundtRk},
\eqref{eq:tRk-pkeps} and \eqref{eq:ptildeR1} imply
    \begin{equation}\label{eq:Tkerr1}
      \norm[{L^{\infty}(\U{k})}]{T_k-\tilde T_{\eps,k}}\le C_K
      (1+C_6)^3 \sum_{\bsnu\in\N_0^k\backslash \Lambda_{\eps,k}}w_\bsnu
      |l_{k,\bsnu}|,
    \end{equation}
    and by \eqref{eq:ptildeR2}
    \begin{equation}\label{eq:Tkerr2}
      \norm[{L^{\infty}(\U{k})}]{\partial_kT_k-\partial_k\tilde T_{\eps,k}}\le C_K
      (1+C_6)^3 \sum_{\bsnu\in\N_0^k\backslash \Lambda_{\eps,k}}w_\bsnu
      |l_{k,\bsnu}|.
    \end{equation}

  In the second case where
  \begin{equation}\label{eq:secondcase}
    \sum_{\bsnu\in\N_0^k\backslash \Lambda_{\eps,k}}w_\bsnu
    |l_{k,\bsnu}|> \frac{K}{1+\norm[L^\infty({\U{k}})]{Q_k}},
    \end{equation}
  we redefine $p_{\eps,k}\dfn 0$, so that $\tilde T_{\eps,k}(\bsx)=x_k$
  (cp.~Rmk.~\ref{rmk:kidentity}). Since $T_k:\U{k}\to \U{1}$ and
  $\tilde T_{\eps,k}:\U{k}\to \U{1}$, we get
  $\norm[{L^{\infty}(\U{k})}]{T_k-\tilde T_{\eps,k}}\le 2$, and therefore
  by \eqref{eq:boundtRk}
    \begin{align}\label{eq:Tkerr2}
      \norm[{L^{\infty}(\U{k})}]{T_k-\tilde T_{\eps,k}}
      &\le\frac{2}{\frac{K}{1+\norm[L^\infty({\U{k}})]{Q_k}}}
        \frac{K}{1+\norm[L^\infty({\U{k}})]{Q_k}}\nonumber\\
      &\le \frac{2(1+C_6)}{K}
        \sum_{\bsnu\in\N_0^k\backslash \Lambda_{\eps,k}}w_\bsnu
        |l_{k,\bsnu}|.
    \end{align}
    Next, using $Q_k=\sqrt{\partial_kT_k}-1$, by
    \eqref{eq:boundtRk} it holds
    $\norm[L^\infty({\U{k}})]{\sqrt{\partial_kT_k}}\le 1+C_6$
    as well as
    $\norm[L^\infty({\U{k}})]{\partial_kT_k}\le (1+C_6)^2$.
    Similarly
    $\norm[L^\infty({\U{k}})]{\sqrt{\partial_k\tilde
        T_{\eps,k}}}=\norm[L^\infty({\U{k}})]{p_{\eps,k}}\le 1+C_6$ and
    $\norm[L^\infty({\U{k}})]{\partial_k\tilde T_{\eps,k}}\le (1+C_6)^2$.
    Still assuming \eqref{eq:secondcase},
    we get analogous to \eqref{eq:Tkerr2}
    \begin{align*}
      \norm[{L^{\infty}(\U{k})}]{{\partial_kT_k}-
      {\partial_k\tilde T_{\eps,k}}}
      &\le
        \norm[{L^{\infty}(\U{k})}]{{\partial_kT_k}}
        +\norm[{L^{\infty}(\U{k})}]{{\partial_k\tilde T_{\eps,k}}}\nonumber\\
      &\le
        2(1+C_6)^2
        \le 
        \frac{2(1+C_6)^3}{K}
        \sum_{\bsnu\in\N_0^k\backslash \Lambda_{\eps,k}}w_\bsnu
        |l_{k,\bsnu}|.
    \end{align*}

  In total, by \eqref{eq:Tkerr1}, \eqref{eq:Tkerr2} and
  \eqref{eq:sumest}
  \begin{equation*}
    \sum_{k\in\N}\norm[{L^{\infty}(\U{k})}]{T_k-\tilde T_{\eps,k}}
    \le C \sum_{k\in\N}\sum_{\bsnu\in\N_0^k\backslash\Lambda_{\eps,k}}
    w_\bsnu |l_{k,\bsnu}|\le C N_\eps^{-\frac{1}{p}+1},
  \end{equation*}
  for some $C>0$ independent of $\eps>0$.  An analogous estimate is
  obtained for
  $\sum_{k\in\N}\norm[{L^{\infty}(\U{k})}]{\partial_kT_k-\partial_k\tilde
    T_k}$.
\end{proof}

{
  \subsection{Cor.~\ref{COR:SINF}}
  For the proof we'll need the following two lemmata. The first one is
  classical, e.g., \cite[Lemma 3.1]{zm1}.
\begin{lemma}\label{lemma:lip}
  Let $\zeta>0$. Assume that
  $f\in C^1(\cB_\zeta(\U{1});\C)$ such that
  $\sup_{x\in \cB_\zeta(\U{1})}|f(x)|\le L$. Then
  $\sup_{x\in \U{1}}|f'(x)|\le \frac{L}{\zeta}$ and $f:\U{1}\to\C$ is
  Lipschitz continuous with Lipschitz constant $\frac{L}{\zeta}$.
\end{lemma}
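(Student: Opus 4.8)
The plan is to derive the pointwise derivative bound from a Cauchy estimate and then obtain Lipschitz continuity by integrating the derivative along the real segment $[-1,1]$.

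First I would observe that, by the definition of $\cB_\zeta(\U{1})=\set{z+y}{z\in\cB_\zeta,~y\in\U{1}}$, for every $x\in\U{1}$ the open disk $\set{z\in\C}{|z-x|<\zeta}$ is contained in $\cB_\zeta(\U{1})$ (write $w=(w-x)+x$ with $x\in[-1,1]$). Since $f\in C^1(\cB_\zeta(\U{1});\C)$ means $f$ is holomorphic on this open set, Cauchy's integral formula for the first derivative applies on any circle $\set{z\in\C}{|z-x|=r}$ with $0<r<\zeta$:
\begin{equation*}
  f'(x)=\frac{1}{2\pi\ii}\oint_{|z-x|=r}\frac{f(z)}{(z-x)^2}\dd z.
\end{equation*}
Using $\sup_{\cB_\zeta(\U{1})}|f|\le L$ and bounding the integrand by $L/r^2$ over a contour of length $2\pi r$ gives $|f'(x)|\le L/r$. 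Letting $r\uparrow\zeta$ yields $|f'(x)|\le L/\zeta$, and since $x\in\U{1}$ was arbitrary, $\sup_{x\in\U{1}}|f'(x)|\le L/\zeta$.

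For the Lipschitz estimate I would note that the restriction of $f$ to $[-1,1]$ is continuously differentiable in the real sense with real derivative equal to $f'$, so for $x,y\in\U{1}$ the fundamental theorem of calculus gives $f(x)-f(y)=\int_y^x f'(t)\dd t$, whence
\begin{equation*}
  |f(x)-f(y)|\le \sup_{t\in\U{1}}|f'(t)|\,|x-y|\le \frac{L}{\zeta}|x-y|,
\end{equation*}
which is the claimed Lipschitz bound with constant $L/\zeta$.

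There is no genuine obstacle here; the statement is classical. The only points deserving a word of care are the inclusion of the disk $\set{z\in\C}{|z-x|<\zeta}$ in $\cB_\zeta(\U{1})$ and the identification of $C^1$ in the complex sense with holomorphy, both of which are immediate from the definitions. An alternative would be to invoke the Cauchy estimate in the form ``derivative of a bounded holomorphic function on a disk of radius $\zeta$ is bounded by $L/\zeta$ at the center,'' but the argument above is essentially the same and self-contained.
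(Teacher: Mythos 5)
Your proof is correct and complete. The paper does not actually supply a proof of this lemma---it cites it as classical (referring to the finite-dimensional companion paper)---but the Cauchy-estimate argument you give, followed by the fundamental theorem of calculus on the real segment, is precisely the standard proof one would expect, and all the small points you flag (the inclusion $\set{z\in\C}{|z-x|<\zeta}\subseteq\cB_\zeta(\U{1})$ for $x\in\U{1}$, and the identification of complex $C^1$ with holomorphy) are handled correctly.
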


For a function $g$ denote by ${\rm Lip}[g]\in [0,\infty]$ its
Lipschitz constant.
\begin{lemma}\label{lemma:TSlip}
  Let $f_\measi$, $f_\measii$ satisfy Assumption \ref{ass:densities}.
  Then there exists $K>0$ such that for all $k\in\N$, all $j<k$ and
  all $\bsx\in U$ with
  $b_j\dfn
  \max\{\norm[Z]{\psi_{\measi,j}},\norm[Z]{\psi_{\measii,j}}\}$
  \begin{equation}\label{eq:lip1}
    {\rm Lip}\big[\U{1}\ni x_j\mapsto \partial_kT_k(\bsx_{[k]})\big]\le Kb_kb_j,\qquad
    {\rm Lip}\big[\U{1}\ni x_k\mapsto \partial_kT_k(\bsx_{[k]})\big]\le Kb_k
  \end{equation}
  and
  \begin{equation}\label{eq:lip2}
    {\rm Lip}\big[\U{1}\ni x_j\mapsto T_k(\bsx_{[k]})\big]\le 2Kb_kb_j,\qquad
    {\rm Lip}\big[\U{1}\ni x_k\mapsto T_k(\bsx_{[k]})\big]\le 1+K.
  \end{equation}
\end{lemma}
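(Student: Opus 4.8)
The plan is to deduce both displays from Cauchy estimates (Lemma~\ref{lemma:lip}) applied to the holomorphic extension of $R_k\dfn\partial_kT_k$ provided by Prop.~\ref{PROP:COR:DININF}, after choosing, for each $k$ and each $j\le k$, a polyradius $\bsdelta$ in Assumption~\ref{ass:infdens} that is as large as possible in the coordinate directions that matter. Keep the constants $C_1,C_2,C_3>0$, the monotone sequence $(\kappa_j)_{j\in\N}$, the number $\tau>0$ and the index $J\in\N$ exactly as fixed in the proof of Thm.~\ref{THM:TINF}; by Lemma~\ref{LEMMA:SCRFHOL} both $f_\measi$ and $f_\measii$ then satisfy Assumption~\ref{ass:infdens} with $C_1$ and with $\bsdelta=\bsdelta(J,\bsnu)$ as in \eqref{eq:rhonu}, for every $k\in\N$ and every nonzero $\bsnu\in\N_0^k$. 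If $b_j=0$ then $\psi_{\measi,j}=\psi_{\measii,j}=0$, so both densities, and hence (by the construction \eqref{eq:T}) $T_k$ and $\partial_kT_k$, do not depend on $x_j$ for $k>j$, and all four bounds are trivial; assume therefore $b_j>0$, and put $\kappa_{\min}\dfn\min_j\kappa_j>0$ and $b_{\min}\dfn\min\set{b_i}{i<J,~b_i>0}>0$.

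Fix $k$ and $j\le k$ and pick a nonzero $\bsnu\in\N_0^k$ adapted to the pair: when $j<k$ and $j\ge J$ take $\bsnu$ the $j$-th unit vector, and in the remaining cases take $\bsnu$ to be a unit vector whose index is chosen so that \eqref{eq:rhonu} contains no $0/0$ (always possible). Reading off \eqref{eq:rhonu}, the resulting $\bsdelta$ satisfies $\delta_j\ge\kappa_j+\tau/b_j\ge\tau/b_j$ if $j\ge J$, $\delta_j=\kappa_j\ge\kappa_{\min}$ if $j<J$, and $\delta_k\ge\kappa_k+\tau/b_k\ge\tau/b_k$. Applying Prop.~\ref{PROP:COR:DININF} with this $\bsdelta$ gives $\bszeta=(C_2\delta_i)_{i=1}^k$ with $R_k\in C^1(\cB_{\bszeta_{[k]}}(\U{1});\cB_{C_3}(1))$ and, when $k\ge 2$, $R_k:\cB_{\bszeta_{[k-1]}}(\U{1})\times\U{1}\to\cB_{C_3/\delta_k}(1)$.

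Now apply Lemma~\ref{lemma:lip} to the function $z\mapsto R_k(\bsx_{[k]})-1$ obtained by freezing all coordinates but one in $\U{1}$. For $j<k$: the relevant slice lies in $\cB_{\bszeta_{[k-1]}}(\U{1})\times\U{1}$, where $|R_k-1|\le C_3/\delta_k$; with disk radius $\zeta_j=C_2\delta_j$ this yields ${\rm Lip}\big[x_j\mapsto R_k(\bsx_{[k]})\big]\le (C_3/\delta_k)/(C_2\delta_j)\le (C_3b_k/\tau)/(C_2\delta_j)$, which is $\le \frac{C_3}{C_2\tau^2}\,b_kb_j$ if $j\ge J$ and $\le \frac{C_3}{C_2\tau\kappa_{\min}b_{\min}}\,b_kb_j$ if $j<J$. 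For $j=k$: on $\cB_{\bszeta_{[k]}}(\U{1})$ one has $|R_k-1|\le C_3$, so with radius $\zeta_k=C_2\delta_k\ge C_2\tau/b_k$, ${\rm Lip}\big[x_k\mapsto R_k(\bsx_{[k]})\big]\le C_3/\zeta_k\le \frac{C_3}{C_2\tau}\,b_k$; moreover $|R_k|\le 1+C_3$ at all real points. Taking $K$ to be the largest of $C_3$ and the three displayed constants establishes \eqref{eq:lip1}.

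Finally, \eqref{eq:lip2} follows by integrating $R_k$: since $T_k(\bsx_{[k-1]},\cdot):\U{1}\to\U{1}$ is an increasing bijection it fixes $-1$, hence $T_k(\bsx_{[k]})=-1+\int_{-1}^{x_k}R_k(\bsx_{[k-1]},t)\dd t$. Therefore ${\rm Lip}\big[x_k\mapsto T_k(\bsx_{[k]})\big]\le\sup_{t\in\U{1}}|R_k(\bsx_{[k-1]},t)|\le 1+C_3\le 1+K$, and for $j<k$, taking the difference of this identity at two points that differ only in the $j$-th coordinate and using that $[-1,x_k]$ has length at most $2$ gives ${\rm Lip}\big[x_j\mapsto T_k(\bsx_{[k]})\big]\le 2\,{\rm Lip}\big[x_j\mapsto R_k(\bsx_{[k]})\big]\le 2Kb_kb_j$. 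The main obstacle is the bookkeeping in the choice of $\bsnu$ (and the degenerate cases $k\le J$, $k=1$, $b_j=0$) so that $\delta_j$ and $\delta_k$ are simultaneously of the orders $1/b_j$ and $1/b_k$ with constants independent of $k$, together with checking that a one-dimensional complex slice of the polydisc-holomorphic map $R_k$ is again holomorphic and inherits the sharper $L^\infty$ bound of Prop.~\ref{PROP:COR:DININF} \ref{item:cordinN:binf}.
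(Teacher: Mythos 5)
Your proof is correct and follows essentially the same route as the paper's: feed the polyradii $\bsdelta$ from Lemma~\ref{LEMMA:SCRFHOL} into Prop.~\ref{PROP:COR:DININF} to obtain one-variable holomorphy domains and $L^\infty$-bounds for $\partial_kT_k$ (items \ref{item:cordinN:ainf} and \ref{item:cordinN:binf}), apply the Cauchy estimate of Lemma~\ref{lemma:lip} to get \eqref{eq:lip1}, and integrate in $x_k$ to get \eqref{eq:lip2}. The one cosmetic difference is that the paper simply invokes Lemma~\ref{LEMMA:SCRFHOL} with $J\dfn 1$ (which is permitted since $\kappa_j$ and $\tau$ are independent of $J$), which removes the $j<J$ case entirely and makes the $b_{\min}$ workaround and the extra case analysis in your choice of $\bsnu$ unnecessary.
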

\begin{proof}  
  Fix $k>1$ and $j\in\{1,\dots,k-1\}$.  First applying Lemma
  \ref{LEMMA:SCRFHOL} with $J\dfn 1$ and the multiindex
  $\bsnu\in\N_0^k$ with $\nu_i=0$ if $i\neq j$ and $\nu_j=1$, and then
  applying Prop.~\ref{PROP:COR:DININF} \ref{item:cordinN:binf} it
  holds for some $\bszeta\in (0,\infty)^k$ where in particular
  \begin{equation*}
    \zeta_{j} = \frac{C_2\tau}{b_j}
  \end{equation*}
  that
  \begin{equation*}
    \partial_kT_k-1:\cB_{\bszeta_{[k-1]}}(\U{1})\times \U{1}\to \cB_{\frac{C_3 b_k}{\tau}}.
  \end{equation*}
  Moreover this function is complex differentiable in
  $x_j\in \cB_{\zeta_j}(\U{1})$.  Here the constants $C_2$, $C_3$ and
  $\tau$ solely depend on $\measi$ and $\measii$, and we point out
  that we used the trivial lower bounded $\kappa_j\ge 0$
  for $\kappa_j$ in Lemma
  \ref{LEMMA:SCRFHOL}. By Lemma \ref{lemma:lip}
  \begin{equation*}
    {\rm Lip}\big[\U{1}\ni x_j\mapsto \partial_kT_k(\bsx_{[k]})\big]
    =
    {\rm Lip}\big[\U{1}\ni x_j\mapsto \partial_kT_k(\bsx_{[k]})-1\big]
    \le  \frac{\frac{C_3 b_k}{\tau}}{\zeta_j}
    \le K b_k b_j,
  \end{equation*}
  with
  $K\dfn \max\{\frac{C_3}{C_2},\frac{C_3}{C_2\tau},C_3\}\ge
  \frac{C_3}{C_2}$. This shows the first inequality in \eqref{eq:lip1}.

  Fix $k\in\N$. Similar as above, choosing $\bsnu\in\N_0^k$ such
  that $\nu_i=0$ if $i\neq k$ and $\nu_k=1$ in Lemma
  \ref{LEMMA:SCRFHOL}, we find with Prop.~\ref{PROP:COR:DININF}
  \ref{item:cordinN:ainf} that
  \begin{equation}\label{eq:uselater}
    \partial_kT_k-1:\cB_{\bszeta_{[k]}}(\U{1})\to \cB_{C_3},
  \end{equation}
  where now $\zeta_k=C_2\frac{\tau}{b_k}$.
  Again by Lemma \ref{lemma:lip}
  \begin{equation*}
    {\rm Lip}\big[\U{1}\ni x_k\mapsto \partial_kT_k(\bsx_{[k]})\big]
    =
    {\rm Lip}\big[\U{1}\ni x_k\mapsto \partial_kT_k(\bsx_{[k]})-1\big]
    \le  \frac{C_3}{\frac{C_2\tau}{b_k}}
    \le K b_k,
  \end{equation*}
  which shows the second inequality in \eqref{eq:lip1}.

  Next we show the first inequality in \eqref{eq:lip2} and fix $j<k$.
  For $\bsy\in \U{k}$ and with
  $\tilde\bsy\dfn (\bsy_{[j-1]},\tilde y_j,\bsy_{[j+1:k]})$
  \begin{align*}
    |T_k(\bsy)-T_k(\tilde \bsy)|
    &\le \int_{-1}^{y_k}|\partial_kT_k(\bsy_{[k-1]},t)-\partial_kT_k(\tilde \bsy_{[k-1]},t)|\dd t\nonumber\\
    &\le \int_{-1}^{1} Kb_kb_j|y_j-\tilde y_j| \dd t\le 2Kb_kb_j|y_j-\tilde y_j|.
  \end{align*}
  For the second inequality in \eqref{eq:lip2} let
  $\bsy\in \U{k}$ and
  $\tilde\bsy=(\bsy_{[k-1]},\tilde y_k)$. Then
  \begin{align*}
    |T_k(\bsy)-T_k(\tilde \bsy)|
    &\le \int_{\tilde y_k}^{y_k}|\partial_kT_k(\bsy_{[k-1]},t)|\dd t\nonumber\\
    &\le |y_k-\tilde y_k|\sup_{t\in \U{1}}|\partial_kT_k(\bsy,t)|\nonumber\\
    &\le |y_k-\tilde y_k|(1+C_3)\le
      |y_k-\tilde y_k|(1+K),
  \end{align*}
  where we used
  \eqref{eq:uselater} to bound
  $\sup_{t\in \U{1}}|\partial_kT_k(\bsy,t)|\le 1+C_3$.
\end{proof}

\begin{proof}[Proof of Cor.~\ref{COR:SINF}]
  For notational convenience we drop the index $\eps$ and write
  $\tilde T_{k}$ instead of $\tilde T_{\eps,k}$ etc.
  
  {\bf Step 1.} We show \eqref{eq:Ssumbound}.  Since the assumptions
  on $\measi$ and $\measii$ are the same (see Assumption
  \ref{ass:densities}), switching the roles of the measures,
  \eqref{eq:lip2} implies for the inverse transport $S=(S_k)_{k\in\N}$
  (the KR transport satisfying $S_\sharp\measii=\measi$)
  \begin{equation*}
    {\rm Lip}\big[\U{1}\ni x_j\mapsto S_k(\bsx_{[k]})\big]\le 2Kb_kb_j,\qquad
    {\rm Lip}\big[\U{1}\ni x_k\mapsto S_k(\bsx_{[k]})\big]\le 1+K.
  \end{equation*}
  Recall the notation
  $T_{[k]}=(T_i)_{i=1}^k:\U{k}\to \U{k}$ and
  $T_{[j:k]}=(T_i)_{i=j}^k:\U{k}\to \U{{k-j+1}}$
  for the components of the transport map.
  Then for any
  $k\in\N$ it holds on $\U{k}$
  \begin{align}\label{eq:diffSk}
    |S_k(\tilde T_{[k]})-S_k(T_{[k]})|
    &\le \sum_{j=1}^{k} |S_k(\tilde T_{[j]},T_{[j+1:k]})-S_k(\tilde T_{[j-1]},T_{[j:k]})|\nonumber\\
    &\le (1+K)|\tilde T_k-T_k|+\sum_{j=1}^{k-1} 2Kb_jb_k |\tilde T_j-T_j|.
  \end{align}

  Since $\tilde T:U\to U$ is a bijection (in particular
  $\tilde T_{[k]}:\U{k}\to \U{k}$ is bijective) we get
  \begin{align}\label{eq:sumsk-tsk}
    \sum_{k\in\N}\norm[{L^\infty(\U{k})}]{S_k-\tilde S_k}
    &=\sum_{k\in\N}\norm[{L^\infty(\U{k})}]{S_k\circ \tilde T_{[k]}-\tilde S_k\circ \tilde T_{[k]}}\nonumber\\
    &=\sum_{k\in\N}\norm[{L^\infty(\U{k})}]{S_k\circ \tilde T_{[k]}-S_k\circ T_{[k]}}\nonumber\\
    &\le
      2K \sum_{k\in\N}\sum_{j=1}^{k-1} b_jb_k\norm[{L^\infty(\U{k})}]{\tilde T_j-T_j}
      +(1+K)\sum_{k\in\N}\norm[{L^\infty(\U{k})}]{\tilde T_k-T_k}\nonumber\\
    &=2K \sum_{j\in\N}b_j\norm[{L^\infty(\U{k})}]{\tilde T_j-T_j} \sum_{k>j} b_k
      +(1+K)\sum_{k\in\N}\norm[{L^\infty(\U{k})}]{\tilde T_k-T_k}\nonumber\\
    &\le \left(1+K+2K\max_{j\in\N} b_j \sum_{i\in\N}b_i\right) \sum_{k\in\N}\norm[{L^\infty(\U{k})}]{\tilde T_k-T_k}.
  \end{align}
  Since $\sum_{i\in\N}b_i<\infty$ this together with
  \eqref{eq:Talgebraica} shows \eqref{eq:Ssumbound}.

  {\bf Step 2.} We show \eqref{eq:dSsumbound}. %
  For $\bsx\in\U{k}$ holds $S_{[k]}\circ T_{[k]}(\bsx)=\bsx$.  Thus
  $S_k(T_{[k]}(\bsx))=x_k$ and therefore
  $\partial_kS_k(T_{[k]}(\bsx))\partial_k T_k(\bsx)=1$, where we used
  that $T_{[j]}$ with $j<k$ only depends on $\bsx_{[j]}$. After
  applying $T_{[k]}^{-1}=S_{[k]}$ this reads
  \begin{equation*}
    \partial_kS_k(\bsx) = \frac{1}{\partial_k T_k(S_{[k]}(\bsx))}.
  \end{equation*}
  The second inequality in \eqref{eq:lip2} gives $|\partial_kT_k|\le 1+K$
  and thus $\partial_kS_k(\bsx)\ge \frac{1}{1+K}$ for all
  $k\in\N$ and all
  $\bsx\in \U{k}$. Similarly $\partial_k\tilde S_k(\bsx)=
  \frac{1}{\partial_k \tilde T_k(\tilde S_{[k]}(\bsx))}$.
  By \eqref{eq:Talgebraicb} (as long as $N_\eps\ge 1$) we have
  for $\bsx\in \U{k}$
  \begin{equation*}
    |\partial_k\tilde T_k(\bsx)|\le |\partial_kT_k(\bsx)|
    +|\partial_kT_k(\bsx)-\partial_k\tilde T_k(\bsx)|
    \le 1+K+C
  \end{equation*}
  with the constant $C$ from \eqref{eq:Talgebraicb}.  Thus
  $\partial_k\tilde S_k(\bsx)\ge \frac{1}{1+K+C}$ for $\bsx\in\U{k}$.  Since
  $x\mapsto \frac{1}{x}:[\frac{1}{1+K+C},\infty)\to\R$ has Lipschitz
  constant $(1+K+C)^{2}$, we get
  \begin{align}\label{eq:sumkpSk}
    \sum_{k\in\N}\norm[{L^\infty(\U{k})}]{\partial_kS_{[k]}-\partial_k\tilde S_{[k]}}&=
      \sum_{k\in\N}\normc[{L^\infty(\U{k})}]{\frac{1}{\partial_kT_k\circ S_{[k]}}-
      \frac{1}{\partial_k\tilde T_k\circ \tilde S_{[k]}}}\nonumber\\
&\le   (1+K+C)^2 \sum_{k\in\N}\norm[{L^\infty(\U{k})}]{\partial_kT_k\circ S_{[k]}-\partial_k\tilde T_k\circ\tilde S_{[k]}}\nonumber\\
    &\le (1+K+C)^2\sum_{k\in\N}\bigg(
      \norm[{L^\infty(\U{k})}]{\partial_kT_k\circ S_{[k]}-\partial_kT_k\circ\tilde S_{[k]}}\nonumber\\
    &\quad+\norm[{L^\infty(\U{k})}]{\partial_kT_k\circ \tilde S_{[k]}-\partial_k\tilde T_k\circ\tilde S_{[k]}}\bigg).
  \end{align}
  Using \eqref{eq:lip1} the same calculation as in \eqref{eq:diffSk} yields
  \begin{align*}
    |\partial_kT_k(\tilde S_{[k]})-\partial_kT_k(S_{[k]})|
    &\le \sum_{j=1}^{k} |\partial_kT_k(\tilde S_{[j]},S_{[j+1:k]})-\partial_kT_k(\tilde S_{[j-1]},S_{[j:k]})|\nonumber\\
    &\le Kb_k|\tilde S_k-S_k|+\sum_{j=1}^{k-1} Kb_jb_k |\tilde S_j-S_j|.
  \end{align*}
  Thus by \eqref{eq:sumkpSk} (similar as in \eqref{eq:sumsk-tsk})
  \begin{align*}
    \sum_{k\in\N}\norm[{L^\infty(\U{k})}]{\partial_kS_{[k]}-\partial_k\tilde S_{[k]}}%
        &\le (1+K+C)^2\sum_{k\in\N}\left(Kb_k \norm[{L^\infty(\U{k})}]{\tilde S_k-S_k}+\sum_{j=1}^{k-1}Kb_jb_k\norm[{L^\infty(\U{k})}]{\tilde S_k-S_k} \right)\nonumber\\
    &\quad+(1+K+C)^2\sum_{k\in\N}\norm[{L^\infty(\U{k})}]{\partial_kT_k-\partial_k\tilde T_k}\nonumber\\
    &\le (1+K+C)^2K\max_{j\in\N}b_j\left(1+\sum_{i\in\N}b_i\right)\sum_{k\in\N}\norm[{L^\infty(\U{k})}]{\tilde S_k-S_k}\nonumber\\ 
    &\quad+(1+K+C)^2\sum_{k\in\N}\norm[{L^\infty(\U{k})}]{\partial_kT_k-\partial_k\tilde T_k}.
  \end{align*}
  Applying \eqref{eq:Ssumbound} and \eqref{eq:Talgebraicb} shows
  \eqref{eq:dSsumbound} and concludes the proof.
\end{proof}
}

\section{Proofs of Sec.~\ref{sec:measinfty}}
\subsection{Thm.~\ref{THM:MEASCONVINF}}
\begin{lemma}\label{LEMMA:DET}%
  Let $(a_j)_{j\in\N}$, $(b_j)_{j\in\N}\subseteq (0,\infty)$ be such
  that $\lim_{n\to\infty}\sum_{j=1}^n \log(a_j)\in\R$ exists and
  $\sum_{j\in\N}|a_j-b_j|<\infty$. Then with
  $a_{\rm min}=\min_{j\in\N}a_j>0$, $b_{\rm min}=\min_{j\in\N}b_j>0$
  and
  \begin{equation*}
    C:=\frac{\exp\left(\sum_{j\in\N}\frac{|a_j-b_j|}{a_{\rm min}}\right)
      \lim_{n\to\infty}\prod_{j=1}^na_j}{\min\{a_{\rm min},b_{\rm min}\}}<\infty
  \end{equation*}
  the limit $\lim_{n\to\infty}\prod_{j=1}^n b_j\in\R$ exists and it
  holds
  \begin{equation}\label{eq:proddiff}
    \left|\lim_{n\to\infty}\prod_{j=1}^n a_j-
      \lim_{n\to\infty}\prod_{j=1}^n b_j\right|
    \le C \sum_{j\in\N}|a_j-b_j|.
  \end{equation}
\end{lemma}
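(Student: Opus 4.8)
The plan is to compare the two partial products via a single telescoping identity, and to pass to the limit $n\to\infty$ only at the very end; this way one never needs to control $\sup_n\prod_{j=1}^n a_j$ (which in general exceeds $\lim_n\prod_{j=1}^n a_j$).

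First I would record the elementary preliminaries. Since $\sum_{j=1}^n\log a_j$ converges, $\prod_{j=1}^n a_j=\exp(\sum_{j=1}^n\log a_j)$ converges to $P_\infty\dfn\exp(\lim_n\sum_{j=1}^n\log a_j)\in(0,\infty)$, and $\log a_j\to 0$ forces $a_j\to 1$; combined with $a_j>0$ this gives $a_{\min}>0$. From $\sum_j|a_j-b_j|<\infty$ we get $a_j-b_j\to 0$, hence $b_j\to1$ and $b_{\min}>0$ as well (in each case only finitely many terms can be small). To see that $Q_\infty\dfn\lim_n\prod_{j=1}^n b_j$ exists I would write $\prod_{j=1}^n b_j=\big(\prod_{j=1}^n a_j\big)\prod_{j=1}^n\frac{b_j}{a_j}$ and note that $\prod_{j=1}^n\frac{b_j}{a_j}=\prod_{j=1}^n\big(1+\frac{b_j-a_j}{a_j}\big)$ converges because $\sum_j\big|\frac{b_j-a_j}{a_j}\big|\le a_{\min}^{-1}\sum_j|a_j-b_j|<\infty$.

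The key step is the telescoping identity, valid for every $n$:
\[
  \prod_{j=1}^n a_j-\prod_{j=1}^n b_j=\sum_{k=1}^n\Big(\prod_{j=1}^{k-1}b_j\Big)(a_k-b_k)\Big(\prod_{j=k+1}^n a_j\Big),
\]
which follows at once by setting $U_k\dfn\big(\prod_{j<k}b_j\big)\big(\prod_{j=k}^n a_j\big)$ (so $U_1=\prod_{j\le n}a_j$ and $U_{n+1}=\prod_{j\le n}b_j$) and checking that the $k$-th summand equals $U_k-U_{k+1}$, so that the sum telescopes to $U_1-U_{n+1}$. For each $k$ the coefficient of $a_k-b_k$ can be rewritten and bounded as
\[
  \Big(\prod_{j<k}b_j\Big)\Big(\prod_{k<j\le n}a_j\Big)
  =\Big(\prod_{j<k}\tfrac{b_j}{a_j}\Big)\frac{\prod_{j\le n}a_j}{a_k}
  \le\frac{\prod_{j\le n}a_j}{a_{\min}}\exp\Big(\sum_{j\in\N}\frac{|a_j-b_j|}{a_{\min}}\Big),
\]
using $a_k\ge a_{\min}$ and $1+x\le e^x$ for $x\ge0$. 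Summing over $k$ and then letting $n\to\infty$ (so that $\prod_{j\le n}a_j\to P_\infty$ and $\prod_{j\le n}b_j\to Q_\infty$) yields
\[
  |P_\infty-Q_\infty|\le\frac{P_\infty}{a_{\min}}\exp\Big(\sum_{j\in\N}\frac{|a_j-b_j|}{a_{\min}}\Big)\sum_{j\in\N}|a_j-b_j|,
\]
which is the claimed bound since $a_{\min}^{-1}\le\min\{a_{\min},b_{\min}\}^{-1}$.

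I do not expect a genuine obstacle; the statement is elementary. The only points needing care are the index bookkeeping in the telescoping identity and the decision to estimate the finite-$n$ difference by a quantity still containing $\prod_{j\le n}a_j$ before passing to the limit---bounding everything uniformly in $n$ would force the possibly larger factor $\sup_n\prod_{j\le n}a_j$ into the constant. Note that the bound in fact holds with $a_{\min}$ (rather than $\min\{a_{\min},b_{\min}\}$) in the denominator; the form in the statement is a harmless weakening.
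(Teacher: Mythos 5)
Your proof is correct, and it takes a genuinely different route from the paper's. The paper writes each finite product as $\exp(\sum_{j\le n}\log a_j)$ and bounds the difference of products using the Lipschitz constants of $\log$ on $[a,\infty)$ (namely $1/a$) and of $\exp$ on $(-\infty,a]$ (namely $e^a$), followed by the elementary estimate $\log(a_j+|a_j-b_j|)\le\log a_j+|a_j-b_j|/a_{\rm min}$; you instead expand $\prod_{j\le n}a_j-\prod_{j\le n}b_j$ via a telescoping sum $\sum_k(\prod_{j<k}b_j)(a_k-b_k)(\prod_{j>k}a_j)$, bound each coefficient by factoring it as $(\prod_{j<k}b_j/a_j)\cdot(\prod_{j\le n}a_j)/a_k$, and control $\prod_{j<k}b_j/a_j$ by $1+x\le e^x$. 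The two arguments are of comparable length and elementarity, but your telescoping identity makes the source of the linear factor $\sum_j|a_j-b_j|$ and of the prefactor $\lim_n\prod_j a_j$ more transparent, and it gives the slightly sharper constant with $a_{\rm min}$ alone in the denominator (which you correctly note is a harmless strengthening). Both approaches share the same essential care point: the bound at finite $n$ still contains $\prod_{j\le n}a_j$, so one must take the limit only at the end rather than replacing it by $\sup_n\prod_{j\le n}a_j$. One small further merit of your write-up is that you justify existence of $\lim_n\prod_{j\le n}b_j$ explicitly via $\prod b_j=(\prod a_j)\prod(1+(b_j-a_j)/a_j)$ and the summability of $|b_j-a_j|/a_j$, a point the paper's proof leaves rather terse.
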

\begin{proof}
  For $a>0$, $\log:[a,\infty)\to\R$ has Lipschitz constant $\frac{1}{a}$. Thus
  \begin{equation}\label{eq:logajepsjdiff}
    |\log(a_j)-\log(b_j)|
    \le
      \frac{|a_j-b_j|}{\min\{a_j,b_j\}}\qquad\forall j\in\N.
  \end{equation}
  For $a>0$, $\exp:(-\infty,a]\to\R$ has Lipschitz constant
  $\exp(a)$. Thus, since $a_j+|a_j-b_j|\ge\max\{a_j,b_j\}$,
  \begin{align}\label{eq:proddiffajepsj}
    \left|\prod_{j=1}^n a_j-\prod_{j=1}^n b_j\right|
    &= \exp\left(\sum_{j=1}^n\log(a_j)\right)-\exp\left(\sum_{j=1}^n\log(b_j)\right)\nonumber\\
    &\le \exp\left(\sum_{j=1}^n\log(a_j+|a_j-b_j|)\right)\sum_{j=1}^n \frac{|a_j-b_j|}{\min\{a_j,b_j\}}.
  \end{align}
  Since $\lim_{n\to\infty}\sum_{j=1}^n \log(a_j)\in\R$, it must
  hold $\log(a_j)\to 0$ and $a_j\to 1$ as $j\to\infty$. Hence
  $a_{\rm min}\dfn\min_{j\in\N}a_j>0$.
  Using $\log(1+x)\le x$ for $x\ge 0$ so that
  \begin{equation*}
    \log(a_j+|a_j-b_j|)=\log\left(a_j\left(1+\frac{|a_j-b_j|}{a_j}\right)\right)
    \le\log(a_j)+\frac{|a_j-b_j|}{a_{\rm min}}
  \end{equation*}
  we get
  \begin{align*}
        \lim_{n\to\infty}\exp\left(\sum_{j=1}^n\log(a_j+|a_j-b_j|)\right)&\le \lim_{n\to\infty}\exp\left(\sum_{j=1}^n\Bigg(\log(a_j)+\frac{|a_j-b_j|}{a_{\rm min}}\Bigg)\right)\nonumber\\
      &= \exp\left(\sum_{j\in\N}\frac{|a_j-b_j|}{a_{\rm min}}\right)
        \lim_{n\to\infty}\prod_{j=1}^na_j<\infty.
  \end{align*}
  Equation \eqref{eq:proddiff} follows by taking the limit
  $n\to\infty$ in \eqref{eq:proddiffajepsj}.
\end{proof}

{
\begin{lemma}\label{lemma:deterr}
  Let $T$, $\tilde T_\eps$ be as in Thm.~\ref{THM:TINF} and
  $S\dfn T^{-1}$ and $\tilde S_\eps\dfn \tilde T_\eps^{-1}$.  Then
  there exists $C$ such that for all $\eps>0$
  \begin{equation}\label{eq:prod-prod}
    \sup_{\bsy\in U}\left|\lim_{n\to\infty}\prod_{j=1}^n\partial_jS_j(\bsy_{[j]})-
      \lim_{n\to\infty}\prod_{j=1}^n\partial_j\tilde S_{\eps,j}(\bsy_{[j]})\right|
    \le C N_\eps^{-\frac{1}{p}+1}.
  \end{equation}
\end{lemma}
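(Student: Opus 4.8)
The plan is to apply Lemma~\ref{LEMMA:DET} pointwise in $\bsy\in U$ with $a_j\dfn\partial_jS_j(\bsy_{[j]})$ and $b_j\dfn\partial_j\tilde S_{\eps,j}(\bsy_{[j]})$, and then to pass to the supremum over $\bsy$. To verify the hypotheses of Lemma~\ref{LEMMA:DET} for the sequence $(a_j)_{j\in\N}$: by Rmk~\ref{rmk:knotheS} (equivalently, Thm~\ref{THM:KNOTHEINF}\ref{item:rdder} with the roles of $\measi$ and $\measii$ exchanged), $\det dS(\bsy)=\lim_{n\to\infty}\prod_{j=1}^n\partial_jS_j(\bsy_{[j]})$ is a well-defined positive number, so $\lim_{n\to\infty}\sum_{j=1}^n\log(a_j)=\log(\det dS(\bsy))\in\R$ exists. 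The remaining hypothesis $\sum_{j\in\N}|a_j-b_j|<\infty$ follows from Cor.~\ref{COR:SINF}, which in fact gives the quantitative bound
\begin{equation*}
  \sum_{j\in\N}|a_j-b_j|\le\sum_{k\in\N}\norm[{L^\infty(\U{k})}]{\partial_kS_k-\partial_k\tilde S_{\eps,k}}\le C_S N_\eps^{-\frac{1}{p}+1},
\end{equation*}
with $C_S$ independent of $\eps$ and of $\bsy$.

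The crux is then to show that the constant $C=C(\bsy,\eps)$ produced by Lemma~\ref{LEMMA:DET} is bounded uniformly in $\bsy$ and $\eps$. We may assume $N_\eps\ge1$, as elsewhere in the paper (for $N_\eps=0$ one has $\tilde T_\eps=\Id$, hence $\tilde S_\eps=\Id$, and the left-hand side of \eqref{eq:prod-prod} vanishes). Recalling the explicit form of the constant in Lemma~\ref{LEMMA:DET}, I need uniform control of $a_{\min}=\min_j a_j$, $b_{\min}=\min_j b_j$, $\lim_n\prod_{j=1}^n a_j$, and $\exp(\sum_j|a_j-b_j|/a_{\min})$. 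From the proof of Cor.~\ref{COR:SINF} one has the pointwise lower bounds $\partial_jS_j\ge\frac{1}{1+K}$ and $\partial_j\tilde S_{\eps,j}\ge\frac{1}{1+K+C'}$ on $\U{j}$, where $K$ is the constant of Lemma~\ref{lemma:TSlip} and $C'$ the constant in \eqref{eq:Talgebraicb}; hence $a_{\min}$ and $b_{\min}$ are bounded below by a positive constant independent of $\bsy$ and $\eps$. Next, $\lim_n\prod_{j=1}^n a_j=\det dS(\bsy)=f_\measii(\bsy)/f_\measi(S(\bsy))\le L/M$ by Rmk~\ref{rmk:knotheS} and Assumption~\ref{ass:densities}. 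Finally, using $N_\eps^{-\frac{1}{p}+1}\le1$ for $N_\eps\ge1$ and $p\in(0,1)$ together with the bound above, $\exp(\sum_j|a_j-b_j|/a_{\min})\le\exp\bigl((1+K)C_S\bigr)$. Combining these yields $C(\bsy,\eps)\le C^*$ for some $C^*<\infty$ independent of $\bsy$ and $\eps$.

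Putting the pieces together, for every $\bsy\in U$ and every $\eps>0$ with $N_\eps\ge1$, Lemma~\ref{LEMMA:DET} gives
\begin{equation*}
  \left|\lim_{n\to\infty}\prod_{j=1}^n\partial_jS_j(\bsy_{[j]})-\lim_{n\to\infty}\prod_{j=1}^n\partial_j\tilde S_{\eps,j}(\bsy_{[j]})\right|\le C^*\sum_{j\in\N}|a_j-b_j|\le C^*C_S\,N_\eps^{-\frac{1}{p}+1},
\end{equation*}
and taking the supremum over $\bsy\in U$ proves \eqref{eq:prod-prod} with $C\dfn C^*C_S$. I do not expect a genuine obstacle here: the only delicate point is that \emph{all} quantities entering the constant of Lemma~\ref{LEMMA:DET} must be uniform in both $\bsy$ and $\eps$, which is precisely why one invokes the pointwise lower bounds on $\partial_jS_j$ and $\partial_j\tilde S_{\eps,j}$ from the proof of Cor.~\ref{COR:SINF} rather than mere positivity, and why one writes $\lim_n\prod_j a_j$ as the density ratio $f_\measii/(f_\measi\circ S)$ to obtain a bound that does not degenerate as $\eps\to0$.
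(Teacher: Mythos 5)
Your proof follows the paper's argument essentially verbatim: apply Lemma~\ref{LEMMA:DET} pointwise, verify the two hypotheses via Rmk.~\ref{rmk:knotheS} and Cor.~\ref{COR:SINF}, and then check that every quantity entering the constant of Lemma~\ref{LEMMA:DET} is uniform in $\bsy$ and $\eps$ by using the pointwise lower bounds $\partial_j S_j\ge\frac{1}{1+K}$, $\partial_j\tilde S_{\eps,j}\ge\frac{1}{1+K+C'}$ from the proof of Cor.~\ref{COR:SINF}, the ratio formula $\det dS=f_\measii/(f_\measi\circ S)\le L/M$, and $N_\eps^{-1/p+1}\le 1$ for $N_\eps\ge1$. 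One small slip: when $N_\eps=0$ you claim $\tilde S_\eps=\Id$ forces the left-hand side of \eqref{eq:prod-prod} to vanish, but that side is then $\sup_\bsy|\det dS(\bsy)-1|$, which is generically nonzero; the case is actually trivial because $-\tfrac1p+1<0$, so $N_\eps^{-1/p+1}=+\infty$ when $N_\eps=0$ and the inequality holds vacuously, which is what the paper means by ``trivial.'' Apart from that edge case, the argument is sound and identical in substance to the paper's proof.
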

\begin{proof}
  If $N_\eps=0$ then \eqref{eq:prod-prod} is trivial.  As in Step 2 of
  the proof of Cor.~\ref{COR:SINF}, one shows that for any $k\in\N$
  and $\eps>0$ so small that $N_\eps\ge 1$ we have
  \begin{equation}\label{eq:Cepslower}
    \inf_{\bsy\in U}\min\left\{\partial_kS_k(\bsy_{[k]}),\partial_k\tilde S_{\eps,k}(\bsy_{[k]})\right\}\ge\frac{1}{\bar C}
  \end{equation}
  for a constant $\bar C<\infty$ independent of $k$ and $\eps$.

  By Lemma \ref{LEMMA:DET}
  \begin{equation*}
    \sup_{\bsy\in U}\left|\lim_{n\to\infty}\prod_{j=1}^n\partial_jS_j(\bsy_{[j]})-
      \lim_{n\to\infty}\prod_{j=1}^n\partial_j\tilde S_{\eps,j}(\bsy_{[j]})\right|
    \le C_\eps \sum_{j\in\N}\norm[L^\infty(U_j)]{\partial_j S_j-\partial_j\tilde S_{\eps,j}}
  \end{equation*}
  with
  \begin{equation*}
    C_\eps = \bar C \exp\left(\bar C\sum_{j\in\N}\norm[L^\infty(U_j)]{\partial_j S_j-\partial_j\tilde S_{\eps,j}} \right)\sup_{\bsy\in U}\lim_{n\to\infty}\partial_j S_j(\bsy).
  \end{equation*}
  By \eqref{eq:Cepslower} and using Cor.~\ref{COR:SINF}, we conclude
  that $C_\eps$ is uniformly bounded for all $\eps>0$ so small that
  $N_\eps\ge 1$. Thus it holds \eqref{eq:prod-prod}.
\end{proof}

\begin{proof}[Proof of Thm.~\ref{THM:MEASCONVINF}]
  Throughout we denote
  $\tilde S_\eps=(\tilde S_{\eps,j})_{j\in\N}\dfn \tilde
  T_\eps^{-1}:U\to U$.
  
  {\bf Step 1.} By Thm.~\ref{THM:KNOTHEINF}
  (cp.~Rmk.~\ref{rmk:knotheS}),
  $\det dS(\bsy)\dfn
  \lim_{n\to\infty}\prod_{j=1}^n\partial_jS_j(\bsy_{[j]})\in
  C^0(U;\R)$ exists and (cp.~Assumption \ref{ass:densities})
  \begin{equation}\label{eq:targettransform}
    \frac{\ddd\measii}{\ddd\mu}(\bsy) = f_\measii(\bsy)=\det dS(\bsy)f_\measi(S(\bsy)) =\det dS(\bsy)
    \frf_{\measi}\left(\sum_{j\in\N}S_j(\bsy_{[j]})\psi_{\measi,j}\right)\qquad\forall \bsy\in U.
  \end{equation}
  Next we claim
  \begin{equation}\label{eq:pushforwardclaim}
    \frac{\ddd(\tilde T_\eps)_\sharp\measi}{\ddd\mu}(\bsy)=
    \det d\tilde S_\eps(\bsy)f_\measi(\tilde S_\eps(\bsy))
    =\det d\tilde S_\eps(\bsy)
    \frf_{\measi}\left(\sum_{j\in\N}\tilde S_{\eps,j}(\bsy_{[j]})\psi_{\measi,j}\right)
    \qquad\forall \bsy\in U.
  \end{equation}
  
  By Rmk.~\ref{rmk:kidentity}, there exists $k_0\in\N$ such that
  $\tilde T_{\eps,k}(\bsy_{[k]})=x_k$ for all $k\ge k_0$, and thus
  \begin{equation}\label{eq:Sepsid}
    \tilde S_{\eps,k}(\bsy_{[k]})=x_k\qquad \forall k\ge k_0.
  \end{equation}
  Fix $n_0\ge k_0$ and let $A\subseteq U$ be measurable and of the
  type $A=\times_{j=1}^{n_0}A_j\times U$ with $A_j\subseteq \U{1}$.
  To show \eqref{eq:pushforwardclaim}, e.g., by
  \cite[Thm.~3.5.1]{bogachev}, it suffices to show
  \begin{equation}\label{eq:suffices}
    (\tilde T_\eps)_\sharp\measi(A)=\int_A \det d\tilde
    S_\eps(\bsy)f_\measi(\tilde S_\eps(\bsy))\dd\mu(\bsy),
  \end{equation}
  since these sets form an algebra that generate the $\sigma$-algebra
  on $U$.  For any such $A$
  \begin{equation*}
    (\tilde T_\eps)_\sharp\measi(A)
    = \measi(\set{\bsy\in U}{\tilde T_\eps(\bsy)\in A})
    = \measi(\tilde S_\eps(A))
    = \int_{\tilde S_\eps(A)}f_\measi(\bsy)\dd\mu(\bsy).
  \end{equation*}
  By \eqref{eq:Sepsid}
  we have
  $\tilde S_\eps(A)=\tilde S_{\eps,[n_0]}(\times_{j=1}^{n_0}A_j)\times
  U$ (here
  $\tilde S_{\eps,[n_0]}=(\tilde S_{\eps,j})_{j=1}^{n_0}:\U{j}\to
  \U{j}$) and thus with $\bsy_{[n_0+1:]}=(x_j)_{j>n_0}$
  \begin{align}\label{eq:suffices1}
    (\tilde T_\eps)_\sharp\measi(A) &= \int_{U}\int_{\tilde S_{\eps,[n_0]}(\times_{j=1}^{n_0} A_j)}
                                      f_\measi(\bsy_{[n_0]},\bsy_{[n_0+1:]})\dd\mu(\bsy_{[n_0]})\dd\mu(\bsy_{[n_0+1:]})\nonumber\\
                                    &=\int_{U}\int_{\times_{j=1}^{n_0} A_j}
                                      f_\measi(\tilde S_{\eps,[n_0]}(\bsy_{[n_0]}),\bsy_{[n_0+1:]})
                                      \det d\tilde S_{\eps,[n_0]}(\bsy_{[n_0]})
                                      \dd\mu(\bsy_{[n_0]})\dd\mu(\bsy_{[n_0+1:]}).
  \end{align}
  Again by \eqref{eq:Sepsid}
  we have
  \begin{equation*}
    \det d\tilde S_\eps(\bsy)\dfn
    \lim_{m\to\infty}\prod_{j=1}^m\partial_j \tilde
    S_{\eps,j}(\bsy_{[j]}) = \prod_{j=1}^{k_0}\partial_j \tilde
    S_{\eps,j}(\bsy_{[j]})
    =\prod_{j=1}^{n_0}\partial_j \tilde
    S_{\eps,j}(\bsy_{[j]})
    =\det d\tilde S_{\eps,[n_0]}(\bsy_{[n_0]}).
  \end{equation*}
  Since
  $(\tilde S_{\eps,[n_0]}(\bsy_{[n_0]}),\bsy_{[n_0+1:]})=\tilde
  S_{\eps}(\bsy)$, \eqref{eq:suffices1} shows
  \eqref{eq:suffices}.
  
  {\bf Step 2.} By Lemma \ref{lemma:deterr}
  \begin{equation}\label{eq:deterr}
    \sup_{\bsy\in U}|\det dS(\bsy)-\det d\tilde S_\eps(\bsy)|=
    \sup_{\bsy\in U}\left|\lim_{n\to\infty}\prod_{j=1}^n\partial_j S_j(\bsy_{[j]})-\lim_{n\to\infty}\prod_{j=1}^n\partial_j \tilde S_j(\bsy_{[j]})\right|\le
    C N_\eps^{-\frac{1}{p}+1}.
  \end{equation}
  Using that the differentiable function $\frf_\measi:O_X\to\C$ has
  some Lipschitz constant $r<\infty$ on the compact set
  $\set{\sum_{j\in\N}y_j\psi_{\measi,j}}{\bsy\in U}\subseteq
  O_X\subseteq X_\C$ (cp.~Assumption \ref{ass:density}), we have for
  all $\bsy\in U$ with $b_j\dfn \norm[X]{\psi_{\measi,j}}$
  \begin{align}\label{eq:denserr}
    \left|\frf_\measi\left(\sum_{j\in\N} S_{j}(\bsy_{[j]})\psi_{\measi,j}\right)-
    \frf_\measi\left(\sum_{j\in\N}\tilde S_{\eps,j}(\bsy_{[j]})\psi_{\measi,j}\right)\right|
    &\le r \sum_{j\in\N}|S_{j}(\bsy_{[j]})-\tilde S_{\eps,j}(\bsy_{[j]})|b_j \nonumber\\
    &\le C N_\eps^{-\frac{1}{p}+1}
  \end{align}
  by Cor.~\ref{COR:SINF}, and for some $C$ depending on $r$ and
  $\sup_{j\in\N}b_j<\infty$.
  
  Therefore, using \eqref{eq:targettransform},
  \eqref{eq:pushforwardclaim}, \eqref{eq:deterr}, \eqref{eq:denserr}
  and the triangle inequality we find
  \begin{equation}\label{eq:totalerr}
    \sup_{\bsy\in U}\left|f_\measii(\bsy)-\frac{\ddd(\tilde T_\eps)_\sharp\measi}{\ddd\mu}(\bsy)\right|=    
    \sup_{\bsy\in U}\left|\frac{\ddd\measii}{\ddd\mu}(\bsy)-\frac{\ddd(\tilde T_\eps)_\sharp\measi}{\ddd\mu}(\bsy)\right|
    \le C N_\eps^{-\frac{1}{p}+1}
  \end{equation}
  for some suitable constant $C<\infty$ and all $\eps>0$.

  Equation \eqref{eq:totalerr} yields \eqref{eq:measdiffinf} for the
  total variation distance. Moreover
  \begin{equation*}
    \sup_{\bsy\in U}\left|\sqrt{f_\measii(\bsy)}-\sqrt{\frac{\ddd(\tilde T_\eps)_\sharp\measi}{\ddd\mu}(\bsy)}\right|
    \le \sup_{\bsy\in U}\frac{\left|{f_\measii(\bsy)}-\frac{\ddd(\tilde T_\eps)_\sharp\measi}{\ddd\mu}(\bsy)\right|}{|\sqrt{f_\measii(\bsy)}|}
    \le \frac{CN_\eps^{-\frac{1}{p}+1}}{\inf_{\bsy\in U}\sqrt{f_\measii(\bsy)}},
  \end{equation*}
  which gives \eqref{eq:measdiffinf} for the Hellinger distance since
  $\inf_{\bsy\in U}f_\measii(\bsy)\ge M>0$ by Assumption
  \ref{ass:density}.

  Finally, for the KL divergence, using that
  $a|\log(a)-\log(b)|\le (1+\frac{|a-b|}{b})|a-b|$ for all $a$, $b>0$
  (see \cite[Lemma E.2]{zm1}), by \eqref{eq:totalerr}
  \begin{align*}
    {\rm KL}((\tilde T_\eps)_\sharp \measi||\measii)
    &\le \int_{U}\left|\frac{\ddd(\tilde T_\eps)_\sharp\measi}{\ddd\mu}(\bsy)\right|\left|
      \log\left(\frac{\ddd(\tilde T_\eps)_\sharp\measi}{\ddd\mu}(\bsy)\right)-\log(f_\measii(\bsy))\right|\dd\mu(\bsy)\nonumber\\
    &\le \left(1+\frac{\norm[L^\infty{(U)}]{f_\measii-\frac{\ddd(\tilde T_\eps)_\sharp\measi}{\ddd\mu}}}{\inf_{\bsy\in U}f_\measii(\bsy)}\right)
      \normc[L^\infty{(U)}]{f_\measii-\frac{\ddd(\tilde T_\eps)_\sharp\measi}{\ddd\mu}} \nonumber\\
    &\le C N_\eps^{-\frac{1}{p}+1}.\qedhere
  \end{align*}
\end{proof}
}%

{
\subsection{Prop.~\ref{PROP:WASSERSTEIN}}
  \begin{proof}[Proof of Prop.~\ref{PROP:WASSERSTEIN}]
    The continuous function $(x,y)\mapsto d(x,y)$ is bounded on the
    compact set $M\times M$. Thus the Wasserstein distance
    $W_q(T_\sharp\nu,\tilde T_\sharp\nu)$ is well-defined and finite.

    Fix $\eps>0$ and let $(B_\eps(x_i))_{i=1}^n$
    with $B_\eps(x_i)\dfn\set{x\in M}{d(x,x_i)<\eps}$
    be a finite cover of
    $M$. %
    Such a cover exists by compactness of $M$.  Define
    $I_1\dfn B_\eps(x_1)$ and inductively set
    $I_j\dfn B_\eps(x_j)\backslash\bigcup_{i=1}^{j-1}I_i$, so that
    $(I_j)_{j=1}^n$ is a (measurable) partition of $M$.

    Denote by $\mu_j$ the measure
    $\mu_j(A)\dfn \nu(T^{-1}(A)\cap I_j)$
    and by $\tilde\mu_j$ the measure
    $\tilde \mu_j(A)\dfn \nu(\tilde T^{-1}(A)\cap I_j)$
    for all measurable
    $A\subseteq M$. %
    Then $\sum_{j=1}^n\mu_j(A)=\nu(T^{-1}(A))$ for all measurable $A$,
    i.e., $T_\sharp\nu=\sum_{j=1}^n\mu_j$.
    Similarly
    $\tilde T_\sharp\nu=\sum_{j=1}^n\tilde\mu_j$.
    Note that
    \begin{equation*}
      \mu_j(M)=\nu(T^{-1}(M)\cap I_j)=\nu(I_j)=\nu(\tilde T^{-1}(M)\cap I_j)=\tilde\mu_j(M).
    \end{equation*}
    Wlog $\mu_j(M)=\nu(I_j)=\tilde\mu_j(M)>0$ for all $j\in\{1,\dots,n\}$
    (otherwise we can omit $\mu_j$, $\tilde\mu_j$).
    Let $\Gamma_j$ be the couplings between $\mu_j$ and $\tilde\mu_j$
    (the measures on $M\times M$ with marginals $\mu_j$ and
    $\tilde\mu_j$).  Note that $\Gamma_j$ is not empty since
      $\frac{1}{\nu(I_j)}\mu_j\otimes\tilde\mu_j\in\Gamma_j$. Let
    $\Gamma$ be the couplings between $T_\sharp \nu$ and
    $\tilde T_\sharp \nu$. Then
    $\set{\sum_{j=1}^n\gamma_j}{\gamma_j\in\Gamma_j}\subseteq\Gamma$. Thus
    \begin{align*}
      W_q(T_\sharp\nu,\tilde T_\sharp\nu)^q&=\inf_{\gamma\in\Gamma}
                                             \int_{M\times M} d(x,y)^q\dd\gamma(x,y)\nonumber\\
                                           &\le\inf_{\gamma_j\in\Gamma_j}
                                             \sum_{j=1}^n\int_{M\times M} d(x,y)^q\dd\gamma_j(x,y).%
    \end{align*}
    If $A\subseteq M$ has empty intersection with $T(I_j)=\set{T(x)}{x\in I_j}$,
    then $T^{-1}(A)=\set{x}{T(x)\in A}$ has empty intersection
    with $I_j$. Thus ${\rm supp}(\mu_j)\subseteq T(I_j)$ and
    similarly
    ${\rm supp}(\tilde\mu_j)\subseteq \tilde T(I_j)$. Hence
    \begin{equation*}
      \int_{M\times M}d(x,y)^q\dd\gamma_j(x,y)
      =\int_{T(I_j)\times \tilde T(I_j)}d(x,y)^q\dd\gamma_j(x,y)
      \le \mu_j(T(I_j)) \sup_{(x,y)\in T(I_j)\times \tilde T(I_j)}d(x,y)^q.
    \end{equation*}
    Here we used that $\gamma_j$ has marginal $\mu_j$ in the first
    argument. %
    In total, using $\mu_j(T(I_j))=\nu(I_j)$
    \begin{equation}\label{eq:Wqbound}
      W_q(T_\sharp\nu,T_\sharp\mu)^q\le \nu(M)\max_{j=1,\dots,n}
      \sup_{(x,y)\in T(I_j)\times \tilde T(I_j)}d(x,y)^q.
    \end{equation}

    To bound the supremum we first note that continuity of $T$
    on the compact set $M$ implies uniform continuity, i.e.,
    \begin{equation*}
      \lim_{\delta\to 0}\sup_{x\in M}\sup_{y\in B_\delta(x)\cap M}d(T(x),T(y)) = 0
    \end{equation*}
    and the same holds for $\tilde T$.
    Now fix $x_j\in I_j$ for each $j$. Then
    \begin{align*}
      &\sup_{(x,y)\in T(I_j)\times \tilde T(I_j)}d(x,y)
      \le \sup_{x,y\in I_j}\Big(d(T(x),T(x_j))+
        d(T(x_j),\tilde T(x_j))+d(\tilde T(x_j),\tilde T(y))\Big)\nonumber\\
      &\qquad\le \sup_{x\in M}\sup_{y\in B_\eps(x)\cap M}d(T(x),T(y)) + \sup_{x\in M}d(T(x),\tilde T(x)) +\sup_{x\in M}\sup_{y\in B_\eps(x)\cap M}d(\tilde T(x),\tilde T(y))\nonumber\\
      &\qquad=\sup_{x\in M}d(T(x),\tilde T(x))+o(1)\qquad\text{as }\eps\to 0.
    \end{align*}
    Together with \eqref{eq:Wqbound} this concludes the proof.
  \end{proof}

  \subsection{Lemma \ref{LEMMA:TCONT}}
    \begin{proof}[Proof of Lemma \ref{LEMMA:TCONT}]
    By Lemma \ref{lemma:producttop} $d$ induces the product topology
    on $U$ (independent of the choice of positive and summable
    sequence $(c_j)_{j\in\N}$ in \eqref{eq:prodmet}). Thus, to prove
    the lemma, it suffices to check Lipschitz continuity in case
    $b_j\le Cc_j$.
    
    We begin with $\tilde T$. By Rmk.~\ref{rmk:kidentity} there exists
    $k_0$ such that $T_k(\bsy)=y_k$ for all $\bsy\in \U{k}$. By
    construction each $\tilde T_k:\U{k}\to \U{1}$ is a rational
    function with positive denominator (in particular $C^\infty$) and
    thus there exists $L>0$ such that $L$ is a Lipschitz constant of
    $\tilde T_k:\U{k}\to \U{1}$ for all $k\in\{1,\dots,k_0\}$
    w.r.t.\ the Euclidean norm $\norm[]{\cdot}$.  Thus for all $\bsx$,
    $\bsy\in U$
    \begin{align*}
      d(\tilde T(\bsx),\tilde T(\bsy))
      &=\sum_{k<k_0}c_k|\tilde T_k(\bsx_{[k]})-\tilde T_k(\bsy_{[k]})|
        +\sum_{k\ge k_0}c_k|\tilde T_k(\bsx_{[k]})-\tilde T_k(\bsy_{[k]})|\nonumber\\
      &\le \sum_{k<k_0}c_kL\norm[]{\bsx_{[k]}-\bsy_{[k]}}
        +\sum_{k\ge k_0}c_k|x_k-y_k|\nonumber\\
      &\le \sum_{k<k_0}c_kL\sum_{j=1}^k|x_j-y_j|
        +\sum_{k\ge k_0}c_k|x_k-y_k|\nonumber\\
      &\le C_0\sum_{k\in\N}c_k|x_k-y_k|,
    \end{align*}
    where $C_0\dfn 1+L\sum_{j=1}^{k_0-1}$.
    
    The argument for $T$ is similar as in the proof of
    Cor.~\ref{COR:SINF}. By \eqref{eq:lip2} for all $\bsx$,
    $\bsy\in U$ and all $k\in\N$
    \begin{align*}
      |T_k(\bsx_{[k]})-T_k(\bsy_{[k]})|
      &\le \sum_{j=1}^k
        |T_k(\bsx_{[j]},\bsy_{[j+1:k]})-T_k(\bsx_{[j-1]},\bsy_{[j:k]})|\nonumber\\
      &\le (1+K)|x_k-y_k|+\sum_{j=1}^{k-1}2Kb_kb_j|x_j-y_j|
    \end{align*}
    and thus since $b_j\le Cc_j$
    \begin{align*}
            d(T(\bsx),T(\bsy))&=
                               \sum_{k\in\N}c_k |T_k(\bsx_{[k]})-T_k(\bsy_{[k]})|\nonumber\\
      &\le \sum_{k\in\N}(1+K)c_k|x_k-y_k|
                                                          +\sum_{k\in\N}\sum_{j=1}^{k-1} 2Kb_kb_jc_k|x_j-y_j|\nonumber\\
      &=(1+K) d(\bsx,\bsy)
        +\sum_{j\in\N}b_j|x_j-y_j|\sum_{k>j} 2Kb_kc_k\nonumber\\
                                                        &\le (1+K)d(\bsx,\bsy)
                                                          +C\sum_{j\in\N}c_j|x_j-y_j|\sum_{k\in\N} 2Kb_kc_k\nonumber\\
      &= \left(1+K+2CK\sum_{k\in\N}b_kc_k\right)d(\bsx,\bsy).\qedhere
    \end{align*}
  \end{proof}

  \subsection{Cor.~\ref{COR:MEASCONVINF}}
  \begin{proof}[Proof of Cor.~\ref{COR:MEASCONVINF}]
Fix $\eps>0$.  Set $H:U\to U$ via
$H_j\dfn \tilde T_{\eps,j}$ if $j\le N_\eps$ and $H_j(\bsy)\dfn 0$ for
$j>N_\eps$. Then $\Phi(H(\bsy))=\Phi_{N_\eps}(\tilde T_{\eps,[N_\eps]}(\bsy_{[N_\eps]}))$
for all $\bsy\in U$. Thus $(\Phi\circ H)_\sharp\measi=(\Phi_{N_\eps}\circ
\tilde T_{\eps,[N_\eps]})_\sharp\measi_{N_\eps}$, and it suffices to bound
the difference between $(\Phi\circ H)_\sharp\measi$
and $(\Phi\circ T)_\sharp\measi=\Phi_\sharp\measii$. To this end
we compute similar as in \eqref{eq:before} for all $\bsy\in U$
with $b_j$ in \eqref{eq:bj}
\begin{align}\label{eq:truncated}
  \norm[Y]{\Phi(T(\bsy))-\Phi(H(\bsy))}
  &=\normc[Y]{\sum_{j=1}^{N_\eps}(T_{j}(\bsy_{[j]})-\tilde T_{\eps,j}(\bsy_{[j]}))\psi_{\measii,j}+\sum_{i>{N_\eps}}T_{j}(\bsy_{[j]})\psi_{\measii,j}}\nonumber\\
  &\le \sum_{j=1}^{N_\eps}b_j\norm[L^\infty(\U{j})]{T_j-\tilde T_{\eps,j}}+\sum_{i>{N_\eps}}b_j.
\end{align}
In case the $(b_j)_{j\in\N}$ are monotonically decreasing, Stechkin's
lemma, which is easily checked, states that
$\sum_{i>{N_\eps}}b_j\le \norm[\ell^p(\N)]{(b_j)_{j\in\N}}
{N_\eps}^{-\frac{1}{p}+1}$. The $\ell^p$-norm is finite by Assumption
\ref{ass:densities}. Thus by Thm.~\ref{THM:TINF} the last term in
\eqref{eq:truncated} is bounded by
$C(N_\eps^{-\frac{1}{p}+1}+ {N_\eps}^{-\frac{1}{p}+1})$.  An application of
Prop.~\ref{PROP:WASSERSTEIN} yields the same bound for the Wasserstein
distance.  %
  \end{proof}  
}%
\bibliographystyle{abbrv} \bibliography{main}

\def\cprime{$'$} \def\cprime{$'$}
\begin{thebibliography}{10}

\bibitem{1509.07526}
A.~Alexanderian.
\newblock A brief note on the {K}arhunen-{L}o\`eve expansion, 2015.

\bibitem{pmlr-v70-arjovsky17a}
M.~Arjovsky, S.~Chintala, and L.~Bottou.
\newblock {W}asserstein generative adversarial networks.
\newblock In D.~Precup and Y.~W. Teh, editors, {\em Proceedings of the 34th
  International Conference on Machine Learning}, volume~70 of {\em Proceedings
  of Machine Learning Research}, pages 214--223. PMLR, 06--11 Aug 2017.

\bibitem{bogachev}
V.~I. Bogachev.
\newblock {\em Measure theory. {V}ol. {I}, {II}}.
\newblock Springer-Verlag, Berlin, 2007.

\bibitem{bogachevtri}
V.~I. Bogachev, A.~V. Kolesnikov, and K.~V. Medvedev.
\newblock Triangular transformations of measures.
\newblock {\em Mat. Sb.}, 196(3):3--30, 2005.

\bibitem{chkifa}
A.~Chkifa.
\newblock Sparse polynomial methods in high dimension: application to
  parametric {PDE}, 2014.
\newblock Ph.D.\ thesis, UPMC, Universit\'e Paris 06, Paris, France.

\bibitem{CCS15}
A.~Cohen, A.~Chkifa, and C.~Schwab.
\newblock Breaking the curse of dimensionality in sparse polynomial
  approximation of parametric pdes.
\newblock {\em Journ. Math. Pures et Appliquees}, 103(2):400--428, 2015.

\bibitem{CDS10}
A.~Cohen, R.~DeVore, and C.~Schwab.
\newblock Convergence rates of best {$N$}-term {G}alerkin approximations for a
  class of elliptic s{PDE}s.
\newblock {\em Found. Comput. Math.}, 10(6):615--646, 2010.

\bibitem{CDS11}
A.~Cohen, R.~Devore, and C.~Schwab.
\newblock Analytic regularity and polynomial approximation of parametric and
  stochastic elliptic {PDE}'s.
\newblock {\em Anal. Appl. (Singap.)}, 9(1):11--47, 2011.

\bibitem{MR3839555}
M.~Dashti and A.~M. Stuart.
\newblock The {B}ayesian approach to inverse problems.
\newblock In {\em Handbook of uncertainty quantification. {V}ol. 1, 2, 3},
  pages 311--428. Springer, Cham, 2017.

\bibitem{davis}
P.~Davis.
\newblock {\em Interpolation and Approximation}.
\newblock Dover Books on Mathematics. Dover Publications, 1975.

\bibitem{MR3349831}
G.~De~Philippis and A.~Figalli.
\newblock Partial regularity for optimal transport maps.
\newblock {\em Publ. Math. Inst. Hautes \'{E}tudes Sci.}, 121:81--112, 2015.

\bibitem{doersch2016tutorial}
C.~Doersch.
\newblock Tutorial on variational autoencoders.
\newblock {\em arXiv preprint arXiv:1606.05908}, 2016.

\bibitem{MR2972870}
T.~A. El~Moselhy and Y.~M. Marzouk.
\newblock Bayesian inference with optimal maps.
\newblock {\em J. Comput. Phys.}, 231(23):7815--7850, 2012.

\bibitem{NIPS2014_5ca3e9b1}
I.~Goodfellow, J.~Pouget-Abadie, M.~Mirza, B.~Xu, D.~Warde-Farley, S.~Ozair,
  A.~Courville, and Y.~Bengio.
\newblock Generative adversarial nets.
\newblock In Z.~Ghahramani, M.~Welling, C.~Cortes, N.~Lawrence, and K.~Q.
  Weinberger, editors, {\em Advances in Neural Information Processing Systems},
  volume~27. Curran Associates, Inc., 2014.

\bibitem{goodfellow2014generative}
I.~J. Goodfellow, J.~Pouget-Abadie, M.~Mirza, B.~Xu, D.~Warde-Farley, S.~Ozair,
  A.~Courville, and Y.~Bengio.
\newblock Generative adversarial networks.
\newblock {\em arXiv preprint arXiv:1406.2661}, 2014.

\bibitem{jaini2019sum}
P.~Jaini, K.~A. Selby, and Y.~Yu.
\newblock Sum-of-squares polynomial flow.
\newblock {\em ICML}, 2019.

\bibitem{JSZ16}
C.~Jerez-Hanckes, C.~Schwab, and J.~Zech.
\newblock Electromagnetic wave scattering by random surfaces: Shape holomorphy.
\newblock {\em Math. Mod. Meth. Appl. Sci.}, 27(12):2229--2259, 2017.

\bibitem{padraig}
P.~Kirwan.
\newblock Complexifications of multilinear and polynomial mappings, 1997.
\newblock Ph.D. thesis, National University of Ireland, Galway.

\bibitem{2102.10461}
K.~Kothari, A.~Khorashadizadeh, M.~de~Hoop, and I.~Dokmani\'c.
\newblock Trumpets: Injective flows for inference and inverse problems, 2021.

\bibitem{krantz}
S.~G. Krantz.
\newblock {\em Function theory of several complex variables}.
\newblock AMS Chelsea Publishing, Providence, RI, 2001.
\newblock Reprint of the 1992 edition.

\bibitem{2002.08927}
A.~Kumar, B.~Poole, and K.~Murphy.
\newblock Regularized autoencoders via relaxed injective probability flow,
  2020.

\bibitem{MR3308418}
G.~J. Lord, C.~E. Powell, and T.~Shardlow.
\newblock {\em An introduction to computational stochastic {PDE}s}.
\newblock Cambridge Texts in Applied Mathematics. Cambridge University Press,
  New York, 2014.

\bibitem{MR3821485}
Y.~Marzouk, T.~Moselhy, M.~Parno, and A.~Spantini.
\newblock Sampling via measure transport: an introduction.
\newblock In {\em Handbook of uncertainty quantification. {V}ol. 1, 2, 3},
  pages 785--825. Springer, Cham, 2017.

\bibitem{MR1823156}
L.~Mattner.
\newblock Complex differentiation under the integral.
\newblock {\em Nieuw Arch. Wiskd. (5)}, 2(1):32--35, 2001.

\bibitem{groundwater}
D.~McLaughlin and L.~R. Townley.
\newblock A reassessment of the groundwater inverse problem.
\newblock {\em Water Resources Research}, 32(5):1131--1161, 1996.

\bibitem{munkres}
J.~R. Munkres.
\newblock {\em Topology}.
\newblock Prentice Hall, Inc., Upper Saddle River, NJ, 2000.

\bibitem{munoz99}
G.~A. Mu{\~n}oz, Y.~Sarantopoulos, and A.~Tonge.
\newblock Complexifications of real {B}anach spaces, polynomials and
  multilinear maps.
\newblock {\em Studia Math.}, 134(1):1--33, 1999.

\bibitem{nist}
F.~W.~J. Olver, D.~W. Lozier, R.~F. Boisvert, and C.~W. Clark, editors.
\newblock {\em N{IST} handbook of mathematical functions}.
\newblock U.S. Department of Commerce, National Institute of Standards and
  Technology, Washington, DC; Cambridge University Press, Cambridge, 2010.

\bibitem{papamakarios2019normalizing}
G.~Papamakarios, E.~Nalisnick, D.~J. Rezende, S.~Mohamed, and
  B.~Lakshminarayanan.
\newblock Normalizing flows for probabilistic modeling and inference.
\newblock {\em Journal of Machine Learning Research}, 22:1--64, 2021.

\bibitem{pmlr-v37-rezende15}
D.~Rezende and S.~Mohamed.
\newblock Variational inference with normalizing flows.
\newblock In F.~Bach and D.~Blei, editors, {\em Proceedings of the 32nd
  International Conference on Machine Learning}, volume~37 of {\em Proceedings
  of Machine Learning Research}, pages 1530--1538, Lille, France, 07--09 Jul
  2015. PMLR.

\bibitem{10.5555/1051451}
C.~P. Robert and G.~Casella.
\newblock {\em Monte Carlo Statistical Methods (Springer Texts in Statistics)}.
\newblock Springer-Verlag, Berlin, Heidelberg, 2005.

\bibitem{MR4120535}
A.~Sagiv.
\newblock The {W}asserstein distances between pushed-forward measures with
  applications to uncertainty quantification.
\newblock {\em Commun. Math. Sci.}, 18(3):707--724, 2020.

\bibitem{santambrogio}
F.~Santambrogio.
\newblock {\em Optimal transport for applied mathematicians}, volume~87 of {\em
  Progress in Nonlinear Differential Equations and their Applications}.
\newblock Birkh\"{a}user/Springer, Cham, 2015.
\newblock Calculus of variations, PDEs, and modeling.

\bibitem{CSStBIP2012}
C.~Schwab and A.~M. Stuart.
\newblock Sparse deterministic approximation of {B}ayesian inverse problems.
\newblock {\em Inverse Problems}, 28(4):045003, 32, 2012.

\bibitem{spantini2018inference}
A.~Spantini, D.~Bigoni, and Y.~Marzouk.
\newblock Inference via low-dimensional couplings.
\newblock {\em The Journal of Machine Learning Research}, 19(1):2639--2709,
  2018.

\bibitem{pmlr-v70-telgarsky17a}
M.~Telgarsky.
\newblock Neural networks and rational functions.
\newblock In D.~Precup and Y.~W. Teh, editors, {\em Proceedings of the 34th
  International Conference on Machine Learning}, volume~70 of {\em Proceedings
  of Machine Learning Research}, pages 3387--3393. PMLR, 06--11 Aug 2017.

\bibitem{MR2459454}
C.~Villani.
\newblock {\em Optimal transport}, volume 338 of {\em Grundlehren der
  Mathematischen Wissenschaften [Fundamental Principles of Mathematical
  Sciences]}.
\newblock Springer-Verlag, Berlin, 2009.
\newblock Old and new.

\bibitem{wehenkel2019unconstrained}
A.~Wehenkel and G.~Louppe.
\newblock Unconstrained monotonic neural networks.
\newblock {\em arXiv preprint arXiv:1908.05164}, 2019.

\bibitem{yarotsky}
D.~Yarotsky.
\newblock Error bounds for approximations with deep {ReLU} networks.
\newblock {\em Neural Netw.}, 94:103--114, 2017.

\bibitem{JZdiss}
J.~Zech.
\newblock Sparse-{G}rid {A}pproximation of {H}igh-{D}imensional {P}arametric
  {PDE}s, 2018.
\newblock Dissertation 25683, ETH Z\"urich,
  \url{http://dx.doi.org/10.3929/ethz-b-000340651}.

\bibitem{2006.06994}
J.~Zech and Y.~Marzouk.
\newblock Sparse approximation of triangular transports on bounded domains,
  2020.
\newblock arXiv:2006.06994v1.

\bibitem{zm1}
J.~Zech and Y.~Marzouk.
\newblock Sparse approximation of triangular transports. {P}art {I}: the finite
  dimensional case, 2021.

\bibitem{ZS17}
J.~Zech and C.~Schwab.
\newblock Convergence rates of high dimensional {S}molyak quadrature.
\newblock {\em ESAIM Math. Model. Numer. Anal.}, 54(4):1259--1307, 2020.

\end{thebibliography}
\end{document}